\newtheorem{theorem}{Theorem}[section]
\newtheorem{lemma}[theorem]{Lemma}
\newtheorem{example}[theorem]{Example}
\newtheorem{remark}[theorem]{Remark}
\renewcommand{\d}{{\rm d}}
\newcommand{\ud}{\mathrm{d}}
\title{Convergence and superconvergence analysis of discontinuous Galerkin methods for index-2 integral-algebraic equations}
\author{Hecong Gao and Hui Liang}
\date{October 2024}
\begin{document}

\maketitle
\begin{abstract}
		The integral-algebraic equation (IAE) is a mixed system of first-kind and second-kind Volterra integral equations (VIEs). This paper mainly focuses on the discontinuous Galerkin (DG) method to solve index-2 IAEs. First, the convergence theory of perturbed DG methods for first-kind VIEs is established, and then used to derive the optimal convergence properties of DG methods for index-2 IAEs. It is shown that an $(m-1)$-th degree DG approximation exhibits global convergence of order~$m$ when~$m$ is odd, and of order~$m-1$ when~$m$ is even, for the first component~$x_1$ of the exact solution, corresponding to the second-kind VIE, whereas the convergence order is reduced by two for the second component~$x_2$ of the exact solution, corresponding to the first-kind VIE. Each component also exhibits local superconvergence of one order higher when~$m$ is even. When~$m$ is odd, superconvergence occurs only if $x_1$ satisfies $x_1^{(m)}(0)=0$. Moreover, with this condition, we can extend the local superconvergence result for~$x_2$ to global superconvergence when~$m$ is odd. Note that in the DG method for an index-1 IAE, generally, the global superconvergence of the exact solution component corresponding to the second-kind VIE can only be obtained by iteration. However, we can get superconvergence for all components of the exact solution of the index-2 IAE directly. Some numerical experiments are given to illustrate the obtained theoretical results.
	\end{abstract}
\section{Introduction}
This paper mainly focuses on the following general linear index-2 integral-algebraic equation (IAE):
\begin{align}\label{IAE}
	\left\{\begin{aligned}
		x_1(t)+\int_{0}^{t}\left[K_{11}(t,s)x_1(s)+K_{12}(t,s)x_2(s)\right]\,\mathrm{d}s&=f_1(t),\\
		\int_{0}^{t}K_{21}(t,s)x_1(s)\,\mathrm{d}s&=f_2(t),
	\end{aligned}\right.
\end{align}
for $t\in I:=[0,T]$,
where $K_{ij}$~and $f_j$ are given real-valued functions satisfying
\[
f_2(0)=0\quad\text{and}\quad\text{$|K_{21}(t,t)K_{12}(t,t)|\geq k_0 >0$ for all $t\in I$.}
\]
The IAE \eqref{IAE} is a mixed system of first-kind and second-kind Volterra integral equations (VIEs), initially introduced by Gear~\cite{Gear1990}, who formulated the definition of differential index. IAEs have broad applications as mathematical models in various physical and biological contexts, garnering significant interest among researchers; see the monographs of Brunner~\cite{Brunner, Book2017} and references therein. Liang and Brunner~\cite{tract, 2016siam} introduced the tractability index for the general system of linear IAEs based on the $\nu$-smoothing property of a Volterra integral operator, and decoupled the IAE system into the inherent system of regular second-kind VIEs and a system of first-kind VIEs. In particular, the general linear index-2 IAE can be decoupled into the semi-explicit form~\eqref{IAE}.  It is known~\cite{Gear1990, tract} that both the differential index and the tractability index of the IAE~\eqref{IAE} equal~$2$.

Due to their rich applications, many works have focused on the numerical analysis of IAEs \cite{Brunner, tract, 2016siam, Hadizadeh2010, Hessenberg1, index3, gao2022}. In particular, for the index-2 IAE~\eqref{IAE}, the Jacobi spectral method was investigated by Hadizadeh~\cite{Hadizadeh2010}.  In other work, we have studied discontinuous piecewise polynomial collocation methods for higher-index IAEs of Hessenberg type~\cite{gao2022}, which also include the semi-explicit index-2 IAE~ \eqref{IAE}. Liang and Brunner~\cite{2016siam} observed that not all collocation methods are feasible for index-2 IAEs or higher-index IAEs, and established the convergence of collocation methods for the semi-explicit index-2 IAE~\eqref{IAE}. In addition, the existence, uniqueness and regularity of the exact solution of the IAE~\eqref{IAE} are given as follows.
\begin{theorem}[{\cite[Theorem 4.3]{2016siam}}]\label{thr1}
	Assume that $f_i \in C^{d+i}(I)$ ($i=1$, $2$) with~$f_2(0)=0$, $K_{11}\in C^{d+1}(D)$, $K_{12}$, $K_{21}\in C^{d+2}(D)$ with~$D:=\{\,(t,s):0\leq s \leq t \leq T\,\}$. Then, the system~\eqref{IAE} has a unique solution $x=(x_1,x_2)^{T}\in C^d(I)$, and there exist functions $\kappa_{1i}, \kappa_{2j}\in C^d(I) (i=1,2;j=1,\dots,5)$ and $Q_{11},Q_{2p}\in C^d(D)(p=1,2)$, such that the solution can be represented in the form
	\begin{align*}
		x_1(t)&=\kappa_{11}(t)f_2(t)+\kappa_{12}(t)f'_2(t)+\int_{0}^{t}Q_{11}(t,s)f_2(s)\,\mathrm{d}s, \\
		x_2(t)&=\kappa_{21}(t)f_1(t)+\kappa_{22}(t)f_1'(t)+\kappa_{23}(t)f_2(t)+\kappa_{24}(t)f_2'(t)+\kappa_{25}(t)f_2''(t)+\sum_{p=1}^2\int_{0}^{t}Q_{2p}(t,s)f_p(s)\,\mathrm{d}s.
	\end{align*}
\end{theorem}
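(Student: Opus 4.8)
The plan is to \emph{decouple} the mixed system \eqref{IAE} by differentiating each first-kind equation once, which turns it into a second-kind VIE solvable by a classical resolvent argument; the representation formulas for $x_1$ and $x_2$ then emerge by integrating the resolvent terms by parts. The structural hypothesis $|K_{21}(t,t)K_{12}(t,t)|\ge k_0>0$ forces $K_{21}(t,t)\ne 0$ and $K_{12}(t,t)\ne 0$ on $I$, which is precisely what is needed to divide after differentiation.

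\textbf{Construction of $x_1$.} Differentiating the second equation of \eqref{IAE} and dividing by $K_{21}(t,t)$ gives the second-kind VIE
\[
x_1(t)+\int_0^t \widetilde K_{21}(t,s)\,x_1(s)\,\mathrm{d}s=\frac{f_2'(t)}{K_{21}(t,t)},\qquad \widetilde K_{21}(t,s):=\frac{\partial_t K_{21}(t,s)}{K_{21}(t,t)}.
\]
Since $K_{21}\in C^{d+2}(D)$, the kernel $\widetilde K_{21}$ lies in $C^{d+1}(D)$, so by the standard Volterra theory there is a unique continuous (in fact $C^{d+1}$) solution, of the form $x_1=\psi_1-\mathcal R_1\psi_1$ with $\psi_1(t)=f_2'(t)/K_{21}(t,t)$ and $\mathcal R_1$ the Volterra operator with resolvent kernel $R_1\in C^{d+1}(D)$. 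Integrating $\int_0^t R_1(t,s)\psi_1(s)\,\mathrm{d}s$ by parts, and using $f_2(0)=0$ to annihilate the lower boundary term, yields exactly the claimed form with $\kappa_{12}(t)=1/K_{21}(t,t)$, $\kappa_{11}(t)=-R_1(t,t)/K_{21}(t,t)$, and $Q_{11}(t,s)=\partial_s\!\big(R_1(t,s)/K_{21}(s,s)\big)\in C^d(D)$.

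\textbf{Construction of $x_2$ and uniqueness.} Inserting $x_1$ into the first equation of \eqref{IAE} leaves a first-kind VIE for $x_2$, namely $\int_0^t K_{12}(t,s)x_2(s)\,\mathrm{d}s=g(t)$ with $g(t):=f_1(t)-x_1(t)-\int_0^t K_{11}(t,s)x_1(s)\,\mathrm{d}s$. Evaluating the first equation at $t=0$ gives $x_1(0)=f_1(0)$, hence $g(0)=0$; moreover $g\in C^{d+1}(I)$ by the previous step together with $f_1\in C^{d+1}(I)$ and $K_{11}\in C^{d+1}(D)$. Differentiating, dividing by $K_{12}(t,t)$, solving the resulting second-kind VIE via its resolvent kernel $R_2\in C^{d+1}(D)$, and integrating by parts (using $g(0)=0$) produces
\[
x_2(t)=\frac{g'(t)}{K_{12}(t,t)}-\frac{R_2(t,t)}{K_{12}(t,t)}\,g(t)+\int_0^t \partial_s\!\big(R_2(t,s)/K_{12}(s,s)\big)g(s)\,\mathrm{d}s .
\]
Substituting the expressions for $g$, $g'$ and the representations of $x_1$, $x_1'$ from the previous step, one sees $g'$ contributes $f_1'$ and, through $x_1'$, a term in $f_2''$; the nested integrals so produced are reduced to single integrals by Dirichlet's formula (interchanging the order of integration), and the integrals involving $f_2'$ are converted to integrals of $f_2$ by one further integration by parts whose lower boundary terms vanish again because $f_2(0)=0$. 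Collecting like terms defines $\kappa_{21},\dots,\kappa_{25}\in C^d(I)$ and $Q_{21},Q_{22}\in C^d(D)$. Uniqueness is immediate: any $C^d$ solution of \eqref{IAE} must solve the two derived second-kind VIEs, which have unique solutions, and conversely each derived equation together with the value at $t=0$ (via $f_2(0)=0$ and $g(0)=0$) recovers the corresponding original first-kind equation.

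I expect the main obstacle to be the \emph{regularity bookkeeping} rather than any single estimate: one must track that each differentiation of a first-kind equation costs one derivative of its right-hand side, that passing from $K_{ij}\in C^{d+2}$ to $\widetilde K_{ij}$ and then to the resolvent leaves a $C^{d+1}$ kernel, and that feeding $x_1$ into the equation for $x_2$ consumes the extra derivative of $f_2$ — so that the final count lands exactly on $x\in C^d(I)$, $\kappa_{1i},\kappa_{2j}\in C^d(I)$, $Q_{11},Q_{2p}\in C^d(D)$ and no less. The secondary technical point is organizing the repeated Fubini/integration-by-parts steps in the $x_2$-construction so that all boundary contributions at $s=0$ cancel and every double integral collapses cleanly onto the advertised kernels $Q_{2p}$.
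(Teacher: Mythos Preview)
The paper does not supply its own proof of this theorem; it is quoted from \cite[Theorem~4.3]{2016siam}, so there is no in-paper argument to compare against. Your plan---differentiate each first-kind equation to obtain a second-kind VIE, solve by the resolvent kernel, then integrate by parts to land on the stated representation---is exactly the decoupling procedure behind the cited result, and the regularity count you sketch is correct.

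There is, however, one genuine circularity. After building $x_1$ from the \emph{second} equation alone you write ``evaluating the first equation at $t=0$ gives $x_1(0)=f_1(0)$, hence $g(0)=0$.'' But at that point you have not yet enforced the first equation, so you cannot appeal to it; your own formula gives $x_1(0)=f_2'(0)/K_{21}(0,0)$, and nothing in the hypotheses listed forces this to equal $f_1(0)$. (Take $K_{11}\equiv 0$, $K_{12}\equiv K_{21}\equiv 1$, $f_2(t)=t^2$, $f_1\equiv 1$: then $x_1(t)=2t$ and the first equation at $t=0$ reads $0=1$, so no solution exists.) The index-$2$ structure in fact requires the further compatibility condition $K_{21}(0,0)\,f_1(0)=f_2'(0)$, which is part of the solvability discussion in \cite{2016siam} but has been suppressed in the statement quoted here. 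Once you add that hypothesis (or phrase existence as conditional on $g(0)=0$), your construction goes through; for uniqueness and for the representation formulas your reasoning is already sound, since there you may legitimately assume a solution exists and hence that the first equation holds at $t=0$.
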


For VIEs, piecewise polynomial collocation is one of the most popular numerical methods~\cite{Brunner}, with discontinuous piecewise polynomials forming a natural numerical approximation space. Zhang et al.~\cite{DG1998} studied the discontinuous Galerkin (DG) method for second-kind VIEs, showing that the $(m-1)$-th degree DG approximation possesses global convergence of order~$m$. Moreover, the iterated DG solution is globally superconvergent of order~$m+1$, and achieves a local superconvergence order~$2m$ at the mesh points. Liang~\cite{DG2021} gave a convergence analysis of the DG method for second-kind VIEs without using operator theory. For first-kind VIEs, Brunner et al.~\cite{DG2009} showed that the $(m-1)$-th degree DG approximation exhibits global convergence of order~$m-1$ when~$m$ is even, and of order~$m$ when~$m$ is odd. There is local superconvergence of one order higher but, in the odd order case, only if the exact solution~$u$ of the equation satisfies $u^{(m)}(0)=0$.

The IAE is a mixed system of first-kind and second-kind VIEs, so a natural
question arises: What are the optimal convergence orders and the local
superconvergence orders of the components $x_1$~and $x_2$ of the DG solution of
the coupled IAE system? In our study of the DG method for the index-1
IAE~\cite{gao2023}, the optimal convergence orders were obtained for both the
components corresponding to the first-kind and second-kind VIEs. However, in
order to improve the numerical accuracy, the iterated DG method is introduced,
and the global superconvergence of the iterated DG solution for the component
corresponding to the second-kind VIE is derived. The index-2 IAE~\eqref{IAE} is
also a coupled system of first-kind and second-kind VIEs, but it can be
considered as the system of first-kind VIEs in the perturbed version, which is
different from the index-1 IAE. However, the iterated DG solution can not be
defined for first-kind VIEs. Therefore, the optimal convergence order and the
local superconvergence order of the DG solution for the index-2 IAE \eqref{IAE}
are still open, providing the motivation for this paper.  We aim to fill this
gap and give a complete convergence analysis of DG methods for the coupled
system of the index-2 IAE~\eqref{IAE}.  We will show that due to the special
structure of the index-2
IAE~\eqref{IAE}, fortunately, the superconvergence result can be obtained directly. Unlike the index-1 IAE, for which the superconvergence is obtained by virtue of the iterated DG method~\cite{gao2023}.

This paper is organized as follows. In Section~\ref{sec:2}, the DG scheme for the index-2 IAE~\eqref{IAE} is introduced. Section~\ref{sec:3} investigates the convergence analysis of perturbed DG methods for first-kind VIEs.  These results are then used in Section~\ref{sec:4} to establish the convergence of DG methods for the index-2 IAE, as well as superconvergence results. Numerical examples are given in Section~\ref{sec:5} to illustrate the theoretical results. Throughout the whole paper, $C$ denotes a generic constant, which may depend on the regularity of the solution, but does not depend on the stepsize~$h$.

	\section{DG approximation}\label{sec:2}
Let $N\geq 2$ be a given positive integer.  Define a uniform step
size~$h:=T/N$ and points~$t_n:=nh$ (so that $t_N=T$), and denote the resulting
mesh on~$I$ by
\[
I_h := \{\,t_n:  n=0, 1,  \ldots, N\,\}.
\]
We seek an approximate solution~$(x_{1,h}, x_{2,h})^T$ for the
IAE~\eqref{IAE}, with the components $x_{1,h}$~and $x_{2,h}$ in the space of
discontinuous piecewise polynomials
\[
S_{m-1}^{(-1)}(I_h):=\{\,v:
\text{$v|_{\sigma_n}\in\mathcal{P}_{m-1}(\sigma_n)$
	for $0\leq n\leq N-1)$}\,\}.
\]
Here, $\mathcal{P}_{m-1}(\sigma_n)$ denotes the space of all real polynomials
of degree not exceeding $m-1$ on the subinterval $\sigma_n:=(t_n,t_{n+1}]$.
Then, applying Galerkin's method, we require that
for all $\phi\in S_{m-1}^{(-1)}(I_h)$ and for $0\le n\le N-1$,
\begin{align}
	\int_{t_n}^{t_{n+1}}\left[x_{1,h}(s)+\int_{0}^{s}\left(
	K_{11}(s,\tau)x_{1,h}(\tau)+K_{12}(s,\tau)x_{2,h}(\tau)\right)
	\,\ud\tau\right]\phi(s)\,\ud s
	&=\int_{t_n}^{t_{n+1}}f_1(s)\phi(s)\ud s, \label{DGeq1} \\
	\int_{t_n}^{t_{n+1}}\int_{0}^{s}\left(
	K_{21}(s,\tau)x_{1,h}(\tau)\,\ud\tau\right)\phi(s)\,\ud s
	&=\int_{t_n}^{t_{n+1}}f_2(s)\phi(s)\,\ud s, \label{DGeq2}
\end{align}
i.e.,
\begin{align*}
	&h\int_{0}^{1}x_{1,h}(t_n+sh)\phi(t_n+sh)\,\ud s\\
	&{}+h^2\int_{0}^{1}\int_{0}^{s}\left[
	K_{11}(t_n+sh,t_n+\tau h)x_{1,h}(t_n+\tau h)
	+K_{12}(t_n+sh,t_n+\tau h)x_{2,h}(t_n+\tau h)
	\right]\,\ud\tau\,\phi(t_n+sh)\,\ud s\\
	&{}+h^2\sum_{l=0}^{n-1}\int_{0}^{1}\int_{0}^{1}\left[
	K_{11}(t_n+sh,t_l+\tau h)x_{1,h}(t_l+\tau h)
	+K_{12}(t_n+sh,t_l+\tau h)x_{2,h}(t_l+\tau h)
	\right]\,\ud\tau\,\phi(t_n+sh)\,\ud s\\
	&=h\int_{0}^{1}f_1(t_n+sh)\phi(t_n+sh)\,\ud s,
\end{align*}
and
\begin{align*}
	&h^2\int_{0}^{1}\left(\int_{0}^{s}
	K_{21}(t_n+sh,t_n+\tau h)x_{1,h}(t_n+\tau h)\,\ud\tau\right)
	\phi(t_n+sh)\,\ud s\\
	&\qquad{}+h^2\sum_{l=0}^{n-1}\int_{0}^{1}\left(\int_{0}^{1}
	K_{21}(t_n+sh,t_l+\tau h)x_{1,h}(t_l+\tau h)\,\ud\tau\right)
	\phi(t_n+sh)\,\ud s\\
	&=h\int_{0}^{1}f_2(t_n+sh)\phi(t_n+sh)\,\ud s.
\end{align*}
Since $x_{1,h}$, $x_{2,h}\in S_{m-1}^{(-1)}(I_h)$, the local representation of
the DG solution on the subinterval~$\sigma_n$ can be written as
\begin{equation}\label{local-DG}
	x_{p,h}(t_n+sh)=\sum_{j=0}^{m-1}P_j(s)U_{p,j}^n,
	\text{for $s\in(0,1]$ and $p=1$, $2$,}
\end{equation}
where $P_j(s)$ ($j=0$, \dots, $m-1$) denote the shifted Legendre polynomials of
degree~$j$ on~$[0,1]$, and where the $U_{p,j}^n$ ($p=1$, $2$) are
unknowns to be determined. For $i$, $j=0$, \dots, $m-1$ and $p$, $q=1$, $2$, set
\begin{gather*}
	a_{ij}:=\int_{0}^{1}P_j(s)P_i(s)\,\ud s, \quad
	\beta_{pq}^{n}(i,j):=\int_{0}^{1}\left(\int_{0}^{s}
	K_{pq}(t_n+sh,t_n+\tau h)P_j(\tau)\,\ud\tau\right)\,P_i(s)\,\ud s,\\
	\beta_{pq}^{(n,l)}(i,j):=\int_{0}^{1}\left(\int_{0}^{1}
	K_{pq}(t_n+sh,t_l+\tau h)P_j(\tau)\,\ud\tau\right)P_i(s)\,\ud s.
\end{gather*}
The Galerkin equations can then be written as
\[
\sum_{j=0}^{m-1}a_{ij}U_{1,j}^n+h\sum_{j=0}^{m-1}\left[
\sum_{p=1}^2\beta_{1p}^n(i,j)U_{p,j}^n\right]
+h\sum_{l=0}^{n-1}\sum_{j=0}^{m-1}\left[
\sum_{p=1}^2\beta_{1p}^{(n,l)}(i,j)U_{p,j}^l\right]
=\int_{0}^{1}f_1(t_n+sh)P_i(s)\,\ud s,
\]
and
\[
h\sum_{j=0}^{m-1}\beta_{21}^n(i,j)U_{1,j}^n
+h\sum_{l=0}^{n-1}\sum_{j=0}^{m-1}\beta_{21}^{(n,l)}(i,j)U_{1,j}^l
=\int_{0}^{1}f_2(t_n+sh)P_i(s)\,\ud s.
\]
For $p$, $q=1$, $2$, $0\leq l\leq n-1$ and $0\leq n\leq N-1$, define the
$m$-dimensional vectors
\[
\bm{F}_n^{p}:=\left(\int_{0}^{1}f_p(t_n+sh)P_0(s)\,\ud s, \ldots,
\int_{0}^{1}f_p(t_n+sh)P_{m-1}(s)\,\ud s\right)^T
\quad\text{and}\quad
\bm{U}_n^{p}:=\left(U_{p,0}^n, \ldots, U_{p,m-1}^n\right)^T,
\]
and the $m\times m$~matrices
\[
\bm{A}:=(a_{ij}),\quad
\bm{B}_{pq}^{(n,l)}:=(\beta_{pq}^{(n,l)}(i,j)),\quad
\bm{B}_{pq}^n:=(\beta_{pq}^{n}(i,j))
\quad\text{for $i$, $j=0$, \ldots, $m-1$.}
\]
The Galerkin equations can then be written in compact form
\begin{equation}\label{algebraic-eq}
	\begin{pmatrix}
		\bm{A}+h\bm{B}_{11}^n & h\bm{B}_{12}^n\\
		h\bm{B}_{21}^n & \bm{0}
	\end{pmatrix}
	\begin{pmatrix}
		\bm{U}^1_n\\
		\bm{U}^2_n
	\end{pmatrix}
	+h\sum_{l=0}^{n-1}\begin{pmatrix}
		\bm{B}_{11}^{(n,l)} & \bm{B}_{12}^{(n,l)} \\
		\bm{B}_{21}^{(n,l)} & \bm{0}
	\end{pmatrix}
	\begin{pmatrix}
		\bm{U}^1_l\\
		\bm{U}^2_l
	\end{pmatrix}
	=\begin{pmatrix}
		\bm{F}^1_n\\
		\bm{F}^2_n
	\end{pmatrix}.
\end{equation}

We now state some results from our earlier paper~\cite{gao2023} on the
elements $\beta_{pq}^{n}(i,j)$~and $\beta_{pq}^{(n,l)}(i,j)$ defined above.

\begin{lemma}[{\cite[Lemma 2.1]{gao2023}}] \label{lemma1}
	If $K_{pq}\in C^{d+1}(D)$ for $p$, $q=1$, $2$ with~$d\geq 0$, then for all
	$i$, $j\geq 0$ and $0\leq n\leq N-1$,
	\[
	\beta_{pq}^{n}(i,j)=\begin{cases}
		\frac{1}{2}K_{pq}(t_n,t_n)+O(h),&\text{if $(i,j)=(0,0)$,}\\
		-\alpha_jK_{pq}(t_n,t_n)+O(h),&\text{if $i=j-1$ and $j\geq1$,}\\
		\alpha_iK_{pq}(t_n,t_n)+O(h),&\text{if $j=i-1$ and $i\geq 1$,}\\
		O(h),&\text{if $i=j>0$,} \\
		O(h^{|i-j|-1})+O(h^{d+1}),&\text{otherwise},
	\end{cases}
	\]
	where $\alpha_j=1/(8j^2-2)$ for $j\geq1$. Further, if $d\geq 2$, then for
	$l=0$, \dots, $n-1$,
	\[
	\beta_{pq}^{{(n,l)}}(i,j)=\begin{cases}
		K_{pq}(t_n,t_l)+K'_{pq}(t_n,t_l)h/2+d_{0,0}^{(n,l)}h^2/12+O(h^3),
		&\text{if $(i,j)=(0,0)$,}\\
		\partial_{j+1}K_{pq}(t_n,t_l)h/6+d_{i,j}^{(n,l)}h^2/12+O(h^3),
		&\text{if $(i,j)\in\{(0,1), (1,0)\}$,}\\
		d_{i,j}^{(n,l)}h^2/12+O(h^3),
		&\text{if $(i,j)\in\{(0,2), (2,0), (1,1)\}$,}\\
		O(h^{i+j})+O(h^{d+1}),&\text{otherwise},
	\end{cases}
	\]
	where $\partial_{r}^{k}K_{pq}(t_n,t_l)$ denotes the value at point $(t_n,t_l)$
	of $k$-th partial derivative of the function $K_{pq}(\cdot,\cdot)$ with respect
	to its $r$-th variable, and $K_{pq}'(t_n,t_l)$ denotes
	$(\partial_{1}+\partial_{2})K_{pq}(t_n,t_l)$. In addition,
	\begin{gather*}
		d_{0,0}^{(n,l)}:=2\partial_{1}^2K_{pq}(t_n,t_l)
		+3\partial_1\partial_2K_{pq}(t_n,t_l)+2\partial_{2}^2K_{pq}(t_n,t_l), \\
		d_{0,1}^{(n,l)}:=\partial_{2}^2K_{pq}(t_n,t_l)
		+\partial_{1}\partial_{2}K_{pq}(t_n,t_l), \quad
		d_{1,0}^{(n,l)}:=\partial_{1}^2K_{pq}(t_n,t_l)
		+\partial_{1}\partial_{2}K_{pq}(t_n,t_l), \\
		d_{0,2}^{(n,l)}:=\tfrac{1}{5}\partial_{2}^2K_{pq}(t_n,t_l), \quad
		d_{2,0}^{(n,l)}:=\tfrac{1}{5}\partial_{1}^2K_{pq}(t_n,t_l), \quad
		d_{1,1}^{(n,l)}:=\tfrac{1}{3}\partial_{1}\partial_{2}K_{pq}(t_n,t_l).
	\end{gather*}
\end{lemma}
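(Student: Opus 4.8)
\textit{Sketch.} The plan is to reduce both expansions to Taylor expansions of the kernels $K_{pq}$ in the stepsize $h$, combined with a short list of exact identities for the shifted Legendre polynomials $P_j$ on $[0,1]$: the orthogonality $\int_0^1 P_i(s)P_j(s)\,\ud s=\delta_{ij}/(2i+1)$; the moment-vanishing $\int_0^1 s^k P_i(s)\,\ud s=0$ for $0\le k<i$ (since $s^k\in\mathcal P_k$ is orthogonal to $P_i$); and the antiderivative formulas $\int_0^s P_j(\tau)\,\ud\tau=\bigl(P_{j+1}(s)-P_{j-1}(s)\bigr)/\bigl(2(2j+1)\bigr)$ for $j\ge1$ and $\int_0^s P_0(\tau)\,\ud\tau=s=\tfrac12\bigl(P_1(s)+P_0(s)\bigr)$. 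From the last two one evaluates the frozen-kernel moments $\gamma_{ij}:=\int_0^1 P_i(s)\bigl(\int_0^s P_j(\tau)\,\ud\tau\bigr)\,\ud s$ in closed form: $\gamma_{00}=\tfrac12$, $\gamma_{i,i-1}=\alpha_i$ for $i\ge1$, $\gamma_{j-1,j}=-\alpha_j$ for $j\ge1$, and $\gamma_{ij}=0$ in all other cases, where $\alpha_j=1/(8j^2-2)=1/\bigl(2(2j-1)(2j+1)\bigr)$.

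First I would handle $\beta_{pq}^n(i,j)$. Writing $K_{pq}(t_n+sh,t_n+\tau h)=K_{pq}(t_n,t_n)+h\,r(s,\tau;h)$ with $r$ bounded (which needs only $K_{pq}\in C^1$, i.e., $d\ge0$), the leading term contributes $K_{pq}(t_n,t_n)\gamma_{ij}$ and the remainder is $O(h)$; together with the closed form of $\gamma_{ij}$ this already settles the cases $(i,j)=(0,0)$, $i=j-1$, $j=i-1$, and $i=j>0$. For the remaining case, $|i-j|\ge2$, one needs the sharper bound $O(h^{|i-j|-1})$: Taylor-expand $K_{pq}$ to order $d$, so that each term is $h^k$ times a bivariate polynomial of degree $k$, plus a remainder $O(h^{d+1})$. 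For a monomial $s^b\tau^a$ with $a+b=k$, the inner integral $\int_0^s \tau^a P_j(\tau)\,\ud\tau$ is a polynomial in $s$ of degree at most $a+j+1$, so after pairing with $s^b P_i(s)$ the term vanishes unless $k+j+1\ge i$, i.e., $k\ge i-j-1$; and when $a<j$ this inner polynomial vanishes at both $s=0$ and $s=1$, so an integration by parts in $s$ rewrites the term as $-\int_0^1\bigl(\int_0^s u^b P_i(u)\,\ud u\bigr)s^a P_j(s)\,\ud s$, which vanishes unless $k+i+1\ge j$, i.e., $k\ge j-i-1$. Every surviving monomial therefore satisfies $k\ge|i-j|-1$, which gives $\beta_{pq}^n(i,j)=O(h^{|i-j|-1})+O(h^{d+1})$.

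Next I would handle $\beta_{pq}^{(n,l)}(i,j)$, where $t_n$ and $t_l$ are separated and both integrations run over the whole interval $[0,1]$. Taylor-expanding $K_{pq}(t_n+sh,t_l+\tau h)$ about $(t_n,t_l)$ to order $d$ (with $d\ge2$), the monomial $s^r\tau^{k-r}$ arising from the order-$k$ term contributes the factor $\bigl(\int_0^1 s^r P_i(s)\,\ud s\bigr)\bigl(\int_0^1 \tau^{k-r}P_j(\tau)\,\ud\tau\bigr)$, which by moment-vanishing is zero unless $r\ge i$ and $k-r\ge j$, forcing $k\ge i+j$; this at once gives the last case $\beta_{pq}^{(n,l)}(i,j)=O(h^{i+j})+O(h^{d+1})$. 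When $i+j\le2$ one simply collects the coefficients of $h^0$, $h^1$, $h^2$ using the handful of moments $\int_0^1 s^r P_i(s)\,\ud s$ that occur (for instance $\int_0^1 P_0(s)\,\ud s=1$, $\int_0^1 sP_0(s)\,\ud s=\tfrac12$, $\int_0^1 sP_1(s)\,\ud s=\tfrac16$, $\int_0^1 s^2 P_0(s)\,\ud s=\tfrac13$, $\int_0^1 s^2 P_1(s)\,\ud s=\tfrac14$, and so on), and these assemble into the stated constants $d_{i,j}^{(n,l)}$; the tail of the Taylor sum together with the remainder is $O(h^3)$ because $d\ge2$.

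The bookkeeping of the explicit $h$- and $h^2$-coefficients in the $\beta^{(n,l)}$ expansion is routine. The one genuinely delicate point is the estimate for $\beta_{pq}^n(i,j)$ when $|i-j|\ge2$: the symmetric exponent $|i-j|-1$ does not follow from a single degree count, and recovering the $i<j$ half of the bound is precisely what forces the integration-by-parts step described above.
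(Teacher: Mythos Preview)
The paper does not prove this lemma: it is quoted verbatim from the authors' earlier work \cite{gao2023} and stated without argument, so there is no in-paper proof to compare against. That said, your sketch is the standard route and is correct in substance. Taylor-expanding the kernel and reducing the leading term of $\beta_{pq}^n(i,j)$ to the frozen-kernel moments $\gamma_{ij}$ via the Legendre antiderivative identity is exactly how these estimates are obtained, and your closed form for $\gamma_{ij}$ is right. The integration-by-parts trick you describe is the correct device for recovering the exponent $|i-j|-1$ in the case $j>i$; note that you only invoke it when $a<j$, and you should remark (it is immediate) that when $a\ge j$ one already has $k=a+b\ge j>j-i-1$, so the surviving-monomial bound holds trivially in that branch as well. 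For $\beta_{pq}^{(n,l)}(i,j)$ the factorised moment argument is the natural one and gives the $O(h^{i+j})$ bound and the explicit $d_{i,j}^{(n,l)}$ constants after routine bookkeeping.

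One small slip: in your list of moments you write $\int_0^1 s^2 P_1(s)\,\ud s=\tfrac14$, but with the shifted Legendre polynomial $P_1(s)=2s-1$ this integral equals $\tfrac16$. The value $\tfrac16$ is in fact what is needed to reproduce $d_{1,0}^{(n,l)}=\partial_1^2K_{pq}+\partial_1\partial_2K_{pq}$, so the method is fine and only the quoted number needs correcting.
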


We will use the $m\times m$ matrices
\begin{gather*}
	\bm{M}:=\begin{pmatrix}
		\frac{1}{2} & -\alpha_1 \\
		\alpha_1 & 0 & -\alpha_2 \\
		& \ddots & \ddots& \ddots \\
		& & \alpha_{m-2} & 0 & -\alpha_{m-1} \\
		& & & \alpha_{m-1} & 0
	\end{pmatrix}, \quad
	\bm{N}:=\begin{pmatrix}
		1  \\
		& 0 \\
		& & \ddots \\
		& & & 0
	\end{pmatrix},\\
	\bm{D}_{pq}^{(n,l)}:=\begin{pmatrix}
		3K_{pq}'(t_n,t_l)&\partial_2K_{pq}(t_n,t_l)&\cdots&0\\
		\partial_1K_{pq}(t_n,t_l)&0                        &\cdots&0 \\
		\vdots&                  \vdots &\ddots&\vdots\\
		0&                        0&\cdots&0
	\end{pmatrix}, \quad
	\tilde{\bm{D}}_{pq}^{(n,l)}:=\begin{pmatrix}
		d_{0,0}^{(n,l)}& d_{0,1}^{(n,l)}&d_{0,2}^{(n,l)}&\cdots&0\\
		d_{1,0}^{(n,l)}& d_{1,1}^{(n,l)}&              0&\cdots&0\\
		d_{2,0}^{(n,l)}&               0&              0&\cdots&0\\
		\vdots&          \vdots&         \vdots&\ddots&\vdots\\
		0&               0&              0&\cdots&0
	\end{pmatrix},
\end{gather*}
for $p$, $q=1$, $2$, $0\leq l\leq n-1$ and $0\leq n\leq N-1$.
\begin{lemma}[{\cite[Lemma 2.2]{DG2009}}]\label{l:M}
	The matrix $\bm{M}$ is nonsingular.
\end{lemma}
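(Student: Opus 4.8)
The plan is to recognise $\bm{M}$ as the matrix that represents, in the shifted Legendre basis $P_0,\dots,P_{m-1}$, the map $v\mapsto\int_0^sv(\tau)\,\ud\tau$ followed by the $L^2(0,1)$-orthogonal projection onto $\mathcal{P}_{m-1}$. Indeed, its $(i,j)$ entry is the $h\to0$ limit of $\beta_{pq}^{n}(i,j)/K_{pq}(t_n,t_n)$ in Lemma~\ref{lemma1}, namely
\[
m_{ij}:=\int_0^1\Bigl(\int_0^sP_j(\tau)\,\ud\tau\Bigr)P_i(s)\,\ud s .
\]
So the first step is to confirm $\bm{M}=(m_{ij})$ explicitly. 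This follows from the antiderivative identity $\int_0^sP_j(\tau)\,\ud\tau=\frac{1}{2(2j+1)}\bigl(P_{j+1}(s)-P_{j-1}(s)\bigr)$ for $j\ge1$ (and $\int_0^sP_0(\tau)\,\ud\tau=\tfrac12P_0(s)+\tfrac12P_1(s)$), combined with $\int_0^1P_iP_k\,\ud s=\delta_{ik}/(2i+1)$: this yields precisely $m_{00}=\tfrac12$, $m_{j,j-1}=\alpha_j$, $m_{j-1,j}=-\alpha_j$, and $m_{ij}=0$ otherwise, matching the displayed tridiagonal matrix $\bm{M}$.

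From here I would finish by a determinant recurrence. As $\bm{M}$ is tridiagonal, its leading principal minors $D_k$ (with $D_0:=1$) satisfy $D_k=m_{k-1,k-1}D_{k-1}-m_{k-1,k-2}\,m_{k-2,k-1}D_{k-2}$. Here $m_{00}=\tfrac12$ and $m_{kk}=0$ for $k\ge1$, while $m_{k-1,k-2}\,m_{k-2,k-1}=\alpha_{k-1}(-\alpha_{k-1})=-\alpha_{k-1}^2$. Hence $D_1=\tfrac12$ and $D_k=\alpha_{k-1}^2D_{k-2}$ for $k\ge2$, so by induction $D_k>0$ for all $k$ (the even- and odd-indexed minors form two chains started at $D_0=1$ and $D_1=\tfrac12$, each multiplied only by the strictly positive factors $\alpha_{k-1}^2$). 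In particular $\det\bm{M}=D_m>0$, so $\bm{M}$ is nonsingular.

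Alternatively, and more conceptually: if $\bm{M}\bm{c}=\bm{0}$, set $v:=\sum_jc_jP_j\in\mathcal{P}_{m-1}$ and $V(s):=\int_0^sv(\tau)\,\ud\tau$. Then $\bm{M}\bm{c}=\bm{0}$ says exactly that $\int_0^1V(s)P_i(s)\,\ud s=0$ for $i=0,\dots,m-1$, i.e.\ $V\perp\mathcal{P}_{m-1}$ in $L^2(0,1)$. Since $\deg V\le m$, this forces $V=\lambda P_m$ for a scalar $\lambda$; but $V(0)=0$ whereas $P_m(0)=(-1)^m\ne0$, so $\lambda=0$, hence $V\equiv0$ and $v=V'\equiv0$, i.e.\ $\bm{c}=\bm{0}$. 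This also explains why nonsingularity of $\bm{M}$ is exactly the unisolvence one needs for the leading part of the first-kind DG equations.

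There is no genuine difficulty here; this is a small structural lemma. The only things that need attention are the bookkeeping identifying the entries $m_{ij}$ (used in full by the determinant route, only implicitly by the operator route) and, for the latter, recalling that $P_m$ spans the one-dimensional orthogonal complement of $\mathcal{P}_{m-1}$ in $\mathcal{P}_m$ and is nonzero at $s=0$.
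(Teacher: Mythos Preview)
Your proof is correct. Both arguments you give---the tridiagonal determinant recurrence and the operator-theoretic one---are valid, and your identification of the entries of $\bm{M}$ with $m_{ij}=\int_0^1\bigl(\int_0^sP_j(\tau)\,\ud\tau\bigr)P_i(s)\,\ud s$ via the shifted-Legendre antiderivative formula is accurate.

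The paper itself does not prove this lemma at all: it simply cites \cite[Lemma~2.2]{DG2009} and uses the result as a black box. So there is no paper proof to compare against, and your self-contained argument goes beyond what the present paper supplies. For what it is worth, the argument in \cite{DG2009} is essentially your second (operator) route: one observes that $\bm{M}$ represents integration followed by projection onto $\mathcal{P}_{m-1}$, and that this composite is injective because a polynomial antiderivative in $\mathcal{P}_m$ orthogonal to $\mathcal{P}_{m-1}$ must be a multiple of $P_m$, which does not vanish at $s=0$. Your determinant recurrence is a pleasant, more elementary alternative that avoids any appeal to the operator interpretation, at the cost of a little index bookkeeping.
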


By Lemma \ref{lemma1}, one can directly obtain the following results on the
matrices $\bm{B}_{pq}^{n}$ and $\bm{B}_{pq}^{(n,l)}$.

\begin{lemma}[{\cite[Lemma 2.2]{gao2023}}]\label{lemma2}
	Assume that $K_{pq}\in C^{d+1}(D)$ for $p$, $q=1$, $2$ with $d\geq 0$, then
	\begin{align*}
		\bm{B}_{pq}^{n-k+1}=K_{pq}(t_n,t_n)\bm{M}+\bm{O}(h)
		\quad\text{and}\quad
		\bm{B}_{pq}^{(n,n-k)}=K_{pq}(t_n,t_n)\bm{N}+\bm{O}(h)
		\quad\text{for $k=1$, $2$.}
	\end{align*}
	Further, if $d \geq 2$, then
	\[
	\bm{B}_{pq}^{(n,l)}=K_{pq}(t_n,t_l)\bm{N}+\frac{h}{6}\bm{D}_{pq}^{(n,l)}
	+\frac{h^2}{12}\,\tilde{\bm{D}}_{pq}^{(n,l)}+\bm{O}(h^3)
	\quad\text{for $l=0$, \dots, $n-1$.}
	\]
\end{lemma}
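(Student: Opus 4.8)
The plan is to derive all three matrix identities by reading off the scalar estimates of Lemma~\ref{lemma1} entry by entry, re-assembling them into matrices, and keeping careful track of the index bookkeeping and of the order of the remainder terms in each regime. No new analytic input is needed beyond Lemma~\ref{lemma1}, a zeroth-order Taylor expansion, and the orthogonality $\int_0^1 P_i(s)\,\ud s=\delta_{i,0}$.

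\textbf{The estimates for $\bm{B}_{pq}^{n-k+1}$ and $\bm{B}_{pq}^{(n,n-k)}$ (case $d\ge 0$).} For $k=1,2$ one has $t_{n-k+1}=t_n-(k-1)h$, so by the Lipschitz continuity of $K_{pq}$ on $D$ we get $K_{pq}(t_{n-k+1},t_{n-k+1})=K_{pq}(t_n,t_n)+O(h)$. Applying the first part of Lemma~\ref{lemma1} with $n$ replaced by $n-k+1$, the $(i,j)$-entry of $\bm{B}_{pq}^{n-k+1}$ equals $\tfrac12 K_{pq}(t_{n-k+1},t_{n-k+1})+O(h)$ at $(i,j)=(0,0)$, equals $-\alpha_j K_{pq}(t_{n-k+1},t_{n-k+1})+O(h)$ when $i=j-1$, equals $\alpha_i K_{pq}(t_{n-k+1},t_{n-k+1})+O(h)$ when $j=i-1$, and equals $O(h)$ otherwise — note that in the last case $|i-j|\ge 2$, hence $O(h^{|i-j|-1})+O(h^{d+1})=O(h)$. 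Recognising the leading coefficients as exactly the nonzero entries of $\bm{M}$ and absorbing the $O(h)$ correction from $K_{pq}(t_{n-k+1},t_{n-k+1})-K_{pq}(t_n,t_n)$ into the remainder gives $\bm{B}_{pq}^{n-k+1}=K_{pq}(t_n,t_n)\bm{M}+\bm{O}(h)$. For $\bm{B}_{pq}^{(n,n-k)}$ I would argue directly rather than through the refined part of Lemma~\ref{lemma1} (which would require $d\ge 2$): since $K_{pq}\in C^1(D)$, $K_{pq}(t_n+sh,t_{n-k}+\tau h)=K_{pq}(t_n,t_{n-k})+O(h)$ uniformly in $s,\tau$, and because $\int_0^1 P_i(s)\,\ud s=\delta_{i,0}$ it follows that $\beta_{pq}^{(n,n-k)}(i,j)=K_{pq}(t_n,t_{n-k})\delta_{i,0}\delta_{j,0}+O(h)$, i.e. $\bm{B}_{pq}^{(n,n-k)}=K_{pq}(t_n,t_{n-k})\bm{N}+\bm{O}(h)$; replacing $K_{pq}(t_n,t_{n-k})$ by $K_{pq}(t_n,t_n)+O(h)$ completes this part.

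\textbf{The refined expansion of $\bm{B}_{pq}^{(n,l)}$ (case $d\ge 2$).} Here I would transcribe the four-case formula for $\beta_{pq}^{(n,l)}(i,j)$ in the second part of Lemma~\ref{lemma1} into matrix form. The $O(1)$ contribution survives only at $(i,j)=(0,0)$, where it is $K_{pq}(t_n,t_l)$, the single nonzero entry of $K_{pq}(t_n,t_l)\bm{N}$. The $O(h)$ contribution survives only at $(0,0),(0,1),(1,0)$, where it equals $\tfrac12 K_{pq}'(t_n,t_l)h$, $\tfrac16\partial_2 K_{pq}(t_n,t_l)h$, $\tfrac16\partial_1 K_{pq}(t_n,t_l)h$ respectively (using that for $(i,j)\in\{(0,1),(1,0)\}$ the symbol $\partial_{j+1}$ is $\partial_2$ when $j=1$ and $\partial_1$ when $j=0$); factoring out $h/6$ this is exactly $\tfrac{h}{6}\bm{D}_{pq}^{(n,l)}$, whose $(0,0)$-entry $3K_{pq}'$ reproduces $\tfrac h2 K_{pq}'$. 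The $O(h^2)$ contribution survives at $(0,0),(0,1),(1,0),(0,2),(2,0),(1,1)$ with coefficients $\tfrac{1}{12}d_{i,j}^{(n,l)}$, which is precisely $\tfrac{h^2}{12}\tilde{\bm{D}}_{pq}^{(n,l)}$ by the definition of the $d_{i,j}^{(n,l)}$. All remaining positions (``otherwise'') have $i+j\ge 3$ and $d\ge 2$, so $O(h^{i+j})+O(h^{d+1})=O(h^3)$, while the $(0,0),(0,1),(1,0)$-entries carry an explicit $O(h^3)$; collecting everything yields the stated identity with an $\bm{O}(h^3)$ remainder.

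\textbf{Main obstacle.} There is no real analytic difficulty; the only thing to be careful about is the index bookkeeping — correctly aligning the scalar cases of Lemma~\ref{lemma1} (indexed by $(i,j)$, with $i$ the row and $j$ the column) with the prescribed sparsity patterns of $\bm{M}$, $\bm{N}$, $\bm{D}_{pq}^{(n,l)}$, $\tilde{\bm{D}}_{pq}^{(n,l)}$, in particular the identification $\partial_{j+1}\leftrightarrow(\partial_1\text{ or }\partial_2)$ at the $(1,0)$ and $(0,1)$ positions — and checking in each of the three regimes that the ``otherwise'' remainders are genuinely of the order absorbed into $\bm{O}(h)$ or $\bm{O}(h^3)$, which reduces to the elementary inequalities $|i-j|-1\ge 1$ when $|i-j|\ge 2$ and $i+j\ge 3$ for all the unlisted pairs.
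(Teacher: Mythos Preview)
Your proposal is correct and follows exactly the route the paper indicates: the paper does not give a proof of this lemma beyond the sentence ``By Lemma~\ref{lemma1}, one can directly obtain the following results,'' and your write-up is precisely that direct derivation, reading off the scalar estimates of Lemma~\ref{lemma1} entry by entry and matching them to the sparsity patterns of $\bm{M}$, $\bm{N}$, $\bm{D}_{pq}^{(n,l)}$, $\tilde{\bm{D}}_{pq}^{(n,l)}$. Your decision to handle $\bm{B}_{pq}^{(n,n-k)}$ for $d\ge 0$ via a direct zeroth-order Taylor expansion and the orthogonality $\int_0^1 P_i=\delta_{i,0}$, rather than invoking the second part of Lemma~\ref{lemma1} (which needs $d\ge 2$), is a genuine improvement in clarity over what the paper's one-line pointer leaves implicit.
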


We now show the existence and uniqueness of the DG solution for the index-2
IAE~\eqref{IAE}.
\begin{theorem}\label{thr2}
	Assume that $f_p\in C(I)$ and $K_{pq}\in C^{d+1}(D)$ for $p$, $q=1$, $2$
	with~$d\geq 0$. Then, there exists $\bar{h}>0$ such that for any
	uniform mesh~$I_h$ with step size~$h\in (0, \bar{h})$, each of the linear
	algebraic systems \eqref{algebraic-eq} has a unique solution
	$\left(\begin{smallmatrix}\bm{U}_n^1 \\ \bm{U}_n^2
	\end{smallmatrix}\right)\in\mathbb{R}^{2m}$ for all $n=0$, \dots, $N-1$.
	Therefore, the approximate equations \eqref{DGeq1} and \eqref{DGeq2} define a
	unique DG approximate solution~$(x_{1,h},x_{2,h})^T$ for the index-$2$
	IAE~\eqref{IAE}, with $x_{1,h}$, $x_{2,h}\in S_{m-1}^{(-1)}(I_h)$, and its
	representation on the interval~$\sigma_n$ is given by~\eqref{local-DG}.
\end{theorem}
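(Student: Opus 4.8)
The plan is to establish existence and uniqueness by showing the block coefficient matrix on the left of \eqref{algebraic-eq} is nonsingular for all sufficiently small~$h$, and then recursively solving for $\left(\begin{smallmatrix}\bm{U}_n^1 \\ \bm{U}_n^2\end{smallmatrix}\right)$ in~$n$. The recursion structure is the easy part: once we know the $2m\times 2m$ matrix
\[
\bm{G}_n:=\begin{pmatrix}\bm{A}+h\bm{B}_{11}^n & h\bm{B}_{12}^n\\ h\bm{B}_{21}^n & \bm{0}\end{pmatrix}
\]
is invertible, the history sum $h\sum_{l=0}^{n-1}(\cdots)\left(\begin{smallmatrix}\bm{U}^1_l\\ \bm{U}^2_l\end{smallmatrix}\right)$ is already determined from previously computed stages, so each stage~$n$ reduces to solving a single $2m\times 2m$ linear system with matrix~$\bm{G}_n$ and a known right-hand side. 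Thus the whole statement follows from invertibility of~$\bm{G}_n$ uniformly in~$n$ and~$h$.

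The crux is therefore to show $\bm{G}_n$ is nonsingular. The obstruction is the zero block in the lower-right corner: $\bm{G}_n$ is \emph{not} close to a nonsingular matrix as $h\to 0$ (its lower-right $m\times m$ block vanishes identically and the lower-left and upper-right blocks carry a factor~$h$), so a naive perturbation argument off $h=0$ fails. The standard fix is to rescale. By Lemma~\ref{lemma2}, $\bm{B}_{21}^n=K_{21}(t_n,t_n)\bm{M}+\bm{O}(h)$ and $\bm{B}_{12}^n=K_{12}(t_n,t_n)\bm{M}+\bm{O}(h)$, and by Lemma~\ref{l:M} the matrix~$\bm{M}$ is nonsingular; combined with the hypothesis $|K_{21}(t,t)K_{12}(t,t)|\ge k_0>0$, both off-diagonal blocks are nonsingular for small~$h$, with inverses bounded independently of~$n$ and~$h$. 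Conjugating $\bm{G}_n$ by $\mathrm{diag}(\bm{I}, h^{-1}\bm{I})$ (equivalently, multiplying the second block-row and the second block-column by suitable powers of~$h$) turns it into
\[
\widetilde{\bm{G}}_n=\begin{pmatrix}\bm{A}+h\bm{B}_{11}^n & \bm{B}_{12}^n\\ \bm{B}_{21}^n & \bm{0}\end{pmatrix},
\]
whose determinant differs from that of~$\bm{G}_n$ only by a nonzero power of~$h$, so the two are simultaneously (non)singular.

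Now $\widetilde{\bm{G}}_n$ converges as $h\to 0$ to the fixed matrix $\widetilde{\bm{G}}_n^{(0)}=\left(\begin{smallmatrix}\bm{A} & K_{12}(t_n,t_n)\bm{M}\\ K_{21}(t_n,t_n)\bm{M} & \bm{0}\end{smallmatrix}\right)$, uniformly in~$n$ (the error terms in Lemma~\ref{lemma2} are $\bm{O}(h)$ with constants depending only on the $C^{d+1}$-norms of the kernels on the compact set~$D$). It remains to verify $\widetilde{\bm{G}}_n^{(0)}$ is nonsingular: using the invertibility of~$\bm{M}$ and of the scalar $K_{21}(t_n,t_n)$, the Schur-complement / block-elimination identity shows that a null vector $(u,v)^T$ forces $K_{21}(t_n,t_n)\bm{M}u=0$, hence $u=0$, and then $K_{12}(t_n,t_n)\bm{M}v=0$, hence $v=0$. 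Since $\det\widetilde{\bm{G}}_n^{(0)}$ is a continuous function of $t_n\in[0,T]$ and never vanishes, it is bounded away from zero uniformly in~$n$; by continuity of the determinant there is $\bar h>0$ such that $\det\widetilde{\bm{G}}_n\ne 0$ for all $n$ and all $h\in(0,\bar h)$, hence $\bm{G}_n$ is nonsingular there. Feeding this back into the recursion gives the unique solution vector at every stage~$n$, and substituting the computed $U_{p,j}^n$ into~\eqref{local-DG} yields the asserted unique DG approximation $(x_{1,h},x_{2,h})^T\in S_{m-1}^{(-1)}(I_h)\times S_{m-1}^{(-1)}(I_h)$; the only real subtlety, as noted, is the rescaling needed to cope with the singular limit at $h=0$.
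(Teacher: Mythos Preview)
Your proposal is correct and follows essentially the same route as the paper: both arguments reduce the invertibility of the block coefficient matrix to the nonsingularity of $\bm{B}_{12}^n$ and $\bm{B}_{21}^n$, which is supplied by Lemmas~\ref{lemma2} and~\ref{l:M} together with the hypothesis $|K_{12}(t,t)K_{21}(t,t)|\ge k_0>0$. The paper is slightly more direct, exploiting the zero lower-right block to write $\det\bm{G}_n=\pm\,h^{2m}\det(\bm{B}_{21}^n)\det(\bm{B}_{12}^n)$ in one step, which amounts exactly to your rescaling (note: your transformation is a two-sided multiplication by $\mathrm{diag}(\bm{I},h^{-1}\bm{I})$, not literally a conjugation, but your parenthetical makes the intent clear and the determinant conclusion is right).
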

\begin{proof}
	It is obvious that the determinant of the above coefficient matrix in the
	linear algebraic system~\eqref{algebraic-eq} is
	$-h^{2m} \det(\bm{B}_{21}^n) \det(\bm{B}_{12}^n)$. By Lemma \ref{lemma2}, we
	know that
	\begin{align*}
		\det(\bm{B}_{21}^n) \det(\bm{B}_{12}^n)
		&=\det(K_{21}(t_n, t_n)\bm{M}+\bm{O}(h))
		\det(K_{12}(t_n, t_n)\bm{M}+\bm{O}(h))\\
		&=K_{21}(t_n, t_n)K_{12}(t_n, t_n)\det(\bm{M})^2 + O(h).
	\end{align*}
	Further, by Lemma~\ref{l:M}, we know that the matrix~$\bm{M}$ is
	nonsingular which, together with the condition
	$|K_{21}(t,t)K_{12}(t,t)|\geq k_0>0$, yields that
	$\det(\bm{B}_{21}^n)\det(\bm{B}_{12}^n)\ne0$ for sufficiently small~$h$.
	Therefore, there exists a unique solution $(\bm{U}^1_n,
	\bm{U}^2_n)^T\in\mathbb{R}^{2m} $ for all $n=0$, \dots, $N-1$ and
	for sufficiently small~$h$, as claimed.
\end{proof}

\section{The perturbed DG method for first-kind VIEs}\label{sec:3}
\subsection{The convergence analysis of the perturbed DG method for
	first-kind VIEs}\label{subsec:3.1}
We first consider the first-kind VIE
\begin{equation}\label{V1}
	\int_{0}^{t}k(t,s)y(s)\,\ud s=g(t)\quad\text{for $t\in I$,}
\end{equation}
where~$g$ is a given function satisfying~$g(0)=0$, and $k\in C^2(D)$
with~$|k(t,t)|\geq k_0>0$ for all $t\in I$. Then we consider the
perturbed equation arising when $g$ is replaced by $g+\delta$
with~$\delta=O(h^{m_1})$.  The DG solution~$y_h\in S_{m-1}^{(-1)}(I_h)$ of this
perturbed equation satisfies
\begin{equation}\label{yh}
	\int_{t_n}^{t_{n+1}}\left(\int_{0}^{s}k(s,\tau)y_h(\tau)\mathrm{d}\tau\right)
	\phi(s)\,\ud s
	=\int_{t_n}^{t_{n+1}}g(s)\phi(s)\,\ud s
	+\int_{t_n}^{t_{n+1}}\delta(s)\phi(s)\,\ud s,
\end{equation}
for all $\phi \in S_{m-1}^{(-1)}(I_h)$.

In order to investigate the convergence of~$y_h$, we first give some helpful
lemmas. The first four lemmas come from Brunner et al.~\cite{DG2009},
the fifth one is from our earlier paper~\cite{gao2023}, and the last one can be
obtained by a simple calculation based on Lemma~\ref{lemma1}.

\begin{lemma}[{\cite[Lemma 2.3]{DG2009}}] \label{lemma6}
	If $y\in C^{p+1}(I)$, then
	\begin{equation}\label{local-y}
		y(t_n+sh)=\sum_{j=0}^{p}P_j(s)Y^n_j+h^{p+1}R_{p,n}(s)\quad
		\text{for $s\in(0,1]$ and $0\leq n\leq N-1$,}
	\end{equation}
	where $Y_j^n:=\int_{0}^{1}P_j(s)y(t_n+sh)\,\ud s
	/\int_{0}^{1} P_j(s)^2\,\ud s$, and the remainder terms are bounded as follows:
	\[
	|R_{p,n}(s)|\leq\frac{|y^{(p+1)}(\xi_n)|}{(p+1)!}
	\quad\text{for some $\xi_n\in(t_n,t_{n+1})$.}
	\]
	Moreover, the Legendre coefficients~$Y_j^n$ satisfy
	\[
	|Y_j^n -\gamma_j y^{(j)}(t_{n+1/2})h^j|
	\leq\frac{h^{M_{p,j}}(2j+1)}{2^{M_{p,j}}M_{p,j}!}\,\|y^{(M_{p,j})}\|_\infty,
	\]
	with $\gamma_j :=j!/(2j)!$ for $j \geq 0$~and $M_{p,j} := \min\{j+2, p+1\}$.
\end{lemma}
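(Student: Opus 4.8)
The plan is to prove the two assertions of the lemma separately, each by elementary means, without any operator theory.

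For the expansion and the pointwise bound on $R_{p,n}$, the first observation is that, by the definition of the coefficients $Y_j^n$ and the $L^2(0,1)$-orthogonality of the shifted Legendre polynomials, the sum $\sum_{j=0}^{p}P_j(s)Y_j^n$ is exactly the $L^2(0,1)$-orthogonal projection onto $\mathcal P_p$ of the function $s\mapsto y(t_n+sh)$; consequently $r(s):=y(t_n+sh)-\sum_{j=0}^{p}P_j(s)Y_j^n$ satisfies $\int_0^1 r(s)q(s)\,\ud s=0$ for every $q\in\mathcal P_p$. Assuming $r\not\equiv 0$, a standard sign-change argument shows $r$ has at least $p+1$ distinct zeros $z_0<\dots<z_p$ in $(0,1)$: were $r$ to change sign at only $k\le p$ points $z_1<\dots<z_k$, the product $r(s)\prod_{i=1}^{k}(s-z_i)$ would have constant sign, contradicting the orthogonality of $r$ to the polynomial $\prod_{i=1}^{k}(s-z_i)\in\mathcal P_p$. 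Since $r(z_i)=0$, the polynomial $\sum_{j=0}^{p}P_j(s)Y_j^n\in\mathcal P_p$ takes the value $y(t_n+z_ih)$ at each $z_i$, so by uniqueness of interpolation it is the Lagrange interpolation polynomial of $s\mapsto y(t_n+sh)$ at the nodes $z_0,\dots,z_p$; the classical interpolation remainder, together with $\frac{\ud^{p+1}}{\ud\tau^{p+1}}y(t_n+\tau h)=h^{p+1}y^{(p+1)}(t_n+\tau h)$, then gives for each $s\in[0,1]$ a point $\xi\in(0,1)$ with $r(s)=\frac{h^{p+1}y^{(p+1)}(t_n+\xi h)}{(p+1)!}\prod_{i=0}^{p}(s-z_i)$. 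Since $s,z_0,\dots,z_p\in[0,1]$ imply $\prod_{i=0}^{p}|s-z_i|\le1$, dividing by $h^{p+1}$ yields $|R_{p,n}(s)|\le|y^{(p+1)}(\xi_n)|/(p+1)!$ with $\xi_n:=t_n+\xi h\in(t_n,t_{n+1})$.

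For the coefficient estimate (for $0\le j\le p$), I would write $Y_j^n=(2j+1)\int_0^1 P_j(s)y(t_n+sh)\,\ud s$ (using $\int_0^1 P_j^2=1/(2j+1)$) and Taylor-expand $y(t_n+sh)$ about the midpoint $t_{n+1/2}$ to order $M-1$ with Lagrange remainder of order $M:=M_{p,j}=\min\{j+2,p+1\}$, noting $M\le j+2$. The Taylor-polynomial part contributes $(2j+1)\sum_{k=0}^{M-1}\frac{h^k y^{(k)}(t_{n+1/2})}{k!}\int_0^1 P_j(s)(s-\tfrac12)^k\,\ud s$, and here $\int_0^1 P_j(s)(s-\tfrac12)^k\,\ud s$ vanishes for $k<j$ (because $P_j\perp\mathcal P_{j-1}$) and whenever $j+k$ is odd (because $P_j(1-s)=(-1)^jP_j(s)$ forces $\int_0^1 P_j(s)(s-\tfrac12)^k\,\ud s=(-1)^{j+k}\int_0^1 P_j(s)(s-\tfrac12)^k\,\ud s$); since $M-1\le j+1$, only $k=j$ survives. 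From the leading coefficient $(2j)!/(j!)^2$ of the shifted Legendre polynomial one gets $\int_0^1 P_j(s)(s-\tfrac12)^j\,\ud s=(j!)^2/[(2j)!(2j+1)]$, so the surviving term equals exactly $\gamma_j y^{(j)}(t_{n+1/2})h^j$. Hence $Y_j^n-\gamma_j y^{(j)}(t_{n+1/2})h^j=(2j+1)\int_0^1 P_j(s)\rho_M(s)\,\ud s$ where $|\rho_M(s)|\le\frac{h^M\|y^{(M)}\|_\infty}{M!}|s-\tfrac12|^M$, and bounding $|s-\tfrac12|\le\tfrac12$ together with $\int_0^1|P_j|\le\|P_j\|_{\infty,[0,1]}\le1$ produces the stated bound $\frac{h^M(2j+1)}{2^M M!}\|y^{(M)}\|_\infty$.

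The only genuinely non-routine step is the one in the first part: recognising that the $L^2$-projection error $r$ must vanish at $p+1$ interior points and hence coincides with the interpolation error at those very nodes. This is what upgrades a naive $L^2$-projection bound (which would carry a Lebesgue-constant factor) to the sharp constant $1/(p+1)!$. Everything else is bookkeeping — keeping track of which moments $\int_0^1 P_j(s)(s-\tfrac12)^k\,\ud s$ survive (orthogonality kills $k<j$, parity kills $k\not\equiv j\ (\mathrm{mod}\ 2)$) and the elementary leading-coefficient computation.
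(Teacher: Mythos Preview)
The paper does not actually prove this lemma; it is quoted verbatim from \cite[Lemma~2.3]{DG2009} and used as a black box. Your argument is correct and self-contained. The only point worth flagging is a harmless ambiguity you inherit from the statement itself: your interpolation-remainder argument produces, for each fixed~$s$, a point $\xi_n=\xi_n(s)\in(t_n,t_{n+1})$, not a single $\xi_n$ valid for all~$s$. This is the standard reading of the lemma, and the paper only ever uses the uniform consequence $|R_{p,n}(s)|\le \|y^{(p+1)}\|_\infty/(p+1)!$, so nothing is lost.

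The device you use in the first part---observing that the $L^2$-projection error onto $\mathcal P_p$ has at least $p+1$ sign changes, hence the projection coincides with Lagrange interpolation at those zeros, whence the sharp constant $1/(p+1)!$---is exactly the argument in \cite{DG2009}. Your treatment of the second part (Taylor expansion about the midpoint, killing moments $k<j$ by orthogonality and $k=j+1$ by parity, then reading off $\gamma_j$ from the leading coefficient $(2j)!/(j!)^2$ of the shifted Legendre polynomial) is likewise the intended one.
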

It is obvious that for all $\phi\in S_{m-1}^{(-1)}(I_h)$, the exact
solution~$y$ of~\eqref{V1} satisfies
\begin{equation}\label{y}
	\int_{t_n}^{t_{n+1}}\left(\int_{0}^{s}k(s,\tau)y(\tau)\,\ud\tau\right)
	\phi(s)\,\ud s=\int_{t_n}^{t_{n+1}}g(s)\phi(s)\,\ud s.
\end{equation}
Since $y_h\in S_{m-1}^{(-1)}(I_h)$, it has the local representation
\begin{equation}\label{local_yh}
	y_{h}(t_n+sh)=\sum_{j=0}^{m-1}P_j(s)U_{j}^n\quad\text{for $s\in(0,1]$,}
\end{equation}
with unknowns~$U_j^n$. Let $e_h := y -y_{h}$. By \eqref{local-y}~and
\eqref{local_yh},
\begin{equation}\label{general-local-err}
	e(t_n+sh)=\sum_{j=0}^{m-1}P_j(s)E_j^n+\sum_{j=m}^{p}P_j(s)Y_j^n
	+h^{p+1}R_{p,n}(s)\quad\text{for $s\in(0, 1]$,}
\end{equation}
where $E_j^n:=Y_j^n-U_j^n$. In the special case $p=m$, by Lemma \ref{lemma6}, we
have
\begin{equation}\label{special-local-err}
	\begin{aligned}
		e(t_n+sh)&=\sum_{j=0}^{m-1}P_j(s)E_j^n+P_m(s)Y_m^n+h^{m+1}R_{m,n}(s)\\
		&=\sum_{j=0}^{m-1}P_j(s)E_j^n+\gamma_my^{(m)}(t_{n+1/2})h^mP_m(s)+O(h^{m+1}).
	\end{aligned}
\end{equation}
In addition, by \eqref{yh}~and \eqref{y}, the error equations can be written as
\begin{multline*}
	h\int_{0}^{1}\left(\int_{0}^{s}
	k(t_n+sh,t_n+\tau h)e_h(t_n+\tau h)\,\ud\tau\right)
	\phi(t_n+sh)\,\ud s\\
	+h\sum_{l=0}^{n-1}\int_{0}^{1}\left(\int_{0}^{1}
	k(t_n+sh,t_l+\tau h)e_h(t_l+\tau h)\,\ud\tau\right)
	\phi(t_n+sh)\,\ud s\\
	=-\int_{0}^{1}\delta(t_n+sh)\phi(t_n+sh)\,\ud s.
\end{multline*}
Substitute \eqref{general-local-err} into the above equation, then
\begin{equation}\label{general_algebraic_err}
	\sum_{j=0}^{m-1}\beta^n_{i,j}E_j^n
	=-\sum_{l=0}^{n-1}\sum_{j=0}^{m-1}\beta^{(n,l)}_{i,j}E_j^l
	+b^n_i+\frac{\delta_{n,i}}{h}\quad\text{for $i=0$, \dots, $m-1$,}
\end{equation}
where
\begin{gather*}
	\beta^n_{i,j}:=\int_{0}^{1}\left(\int_{0}^{s}
	k(t_n+sh,t_n+\tau h)P_j(\tau)\,\ud\tau\right)P_i(s)\,\ud s,\qquad
	\delta_{n,i}:=-\int_{0}^{1}\delta(t_n+sh)P_i(s)\,\ud s,\\
	\beta^{(n,l)}_{i,j}:=\int_{0}^{1}\left(\int_{0}^{1}
	k(t_n+sh,t_l+\tau h)P_j(\tau)\,\ud\tau\right)P_i(s)\,\ud s
\end{gather*}
and
\begin{align*}
	b_i^n&:=-h^{p+1}\int_{0}^{1}\left(\int_{0}^{s}
	k(t_n+sh,t_n+\tau h)R_{p,n}(\tau)\,\ud\tau\right)P_i(s)\,\ud s \\
	&\qquad{}-\sum_{l=0}^{n-1}h^{p+1}\int_{0}^{1}\left(\int_{0}^{1}
	k(t_n+sh,t_l+\tau h)R_{p,l}(\tau)\,\ud\tau\right)P_i(s)\ud s
	-\sum_{j=m}^{p}\beta^n_{i,j}Y_j^n
	-\sum_{j=m}^{p}\sum_{l=0}^{n-1}\beta^{(n,l)}_{i,j}Y_j^l.
\end{align*}
The above form is used in ours superconvergence analysis, and the following
form for the special case $p=m$ is used in our global convergence analysis:
\begin{equation}\label{special_algebraic_err}
	\sum_{j=0}^{m-1}\beta^n_{i,j}E_j^n
	=-\sum_{l=0}^{n-1}\sum_{j=0}^{m-1}\beta^{(n,l)}_{i,j}E_j^l
	+h^m\rho^n_i+\frac{\delta_{n,i}}{h}+O(h^{m+1}),
\end{equation}
where $\rho^n_i:=-\beta^n_{i,m}\hat{Y}_m^n
-\sum_{l=0}^{n-1}\beta^{(n,l)}_{i,m}\hat{Y}_m^l$ is bounded using
Lemma~\ref{lemma1}, and $\hat{Y}_m^k:=\gamma_my^{(m)}(t_{k+1/2})$
for~$k\geq 0$.

\begin{lemma}[{\cite[Lemma 2.4]{DG2009}}]\label{lemma7}
	If $g\in C^{p+2}(I)$ and $k\in C^{p+2}(D)$ with~$p\geq m$, then
	\[
	b_i^n=-\sum_{j=m}^{p-1}\gamma_jh^j\left(O(h^2)
	+\beta^n_{i,j}y^{(j)}(t_{n+1/2})
	+\sum_{l=0}^{n-1}\beta^{(n,l)}_{i,j}y^{(j)}(t_{l+1/2})\right)+O(h^p)
	\quad\text{for $i$, $j\geq 0$,}
	\]
	where $\gamma_j$ is defined in Lemma~\ref{lemma6}.
\end{lemma}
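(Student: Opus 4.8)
\medskip
\noindent\emph{Proof plan.}
The plan is to expand $b_i^n$ exactly as in its definition and to estimate its four groups of terms separately, showing that everything not of the advertised shape is $O(h^p)$. For the two terms that contain the interpolation remainders $h^{p+1}R_{p,\cdot}$, I would first note that $g\in C^{p+2}(I)$ and $k\in C^{p+2}(D)$ with $|k(t,t)|\ge k_0>0$ force the exact solution $y$ of~\eqref{V1} to lie in $C^{p+1}(I)$ (differentiate~\eqref{V1} once to convert it into a second-kind VIE for $y$ with $C^{p+1}$ data). Then Lemma~\ref{lemma6} gives the uniform bound $\lvert R_{p,l}(s)\rvert\le\|y^{(p+1)}\|_\infty/(p+1)!$, so the first of these terms is $O(h^{p+1})$, and the second, being a sum of $n\le N=T/h$ such quantities, is $O(h^p)$. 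Both are absorbed into the final remainder.

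The substance lies in the two double sums $-\sum_{j=m}^{p}\beta_{i,j}^nY_j^n$ and $-\sum_{j=m}^{p}\sum_{l=0}^{n-1}\beta_{i,j}^{(n,l)}Y_j^l$. I would first peel off the top index $j=p$: by Lemma~\ref{lemma6}, $Y_p^k=\gamma_p y^{(p)}(t_{k+1/2})h^p+O(h^{p+1})=O(h^p)$; by Lemma~\ref{lemma1} (applicable with $d=p+1\ge2$) one has $\beta_{i,p}^n=O(1)$ and, since $p\ge m\ge1$, $\beta_{i,p}^{(n,l)}=O(h)$, so $\sum_{l=0}^{n-1}\lvert\beta_{i,p}^{(n,l)}\rvert=O(1)$; hence the entire $j=p$ contribution is $O(h^p)$ and joins the remainder. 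For each remaining $j$ with $m\le j\le p-1$ we have $M_{p,j}=\min\{j+2,p+1\}=j+2$, so Lemma~\ref{lemma6} yields $Y_j^k=\gamma_j y^{(j)}(t_{k+1/2})h^j+r_j^k$ with $\lvert r_j^k\rvert\le Ch^{j+2}$. Inserting this decomposition, the $j$-th term of the two sums splits as
\[
\beta_{i,j}^nY_j^n+\sum_{l=0}^{n-1}\beta_{i,j}^{(n,l)}Y_j^l
=\gamma_j h^j\Bigl(\beta_{i,j}^n y^{(j)}(t_{n+1/2})+\sum_{l=0}^{n-1}\beta_{i,j}^{(n,l)}y^{(j)}(t_{l+1/2})\Bigr)
+\Bigl(\beta_{i,j}^n r_j^n+\sum_{l=0}^{n-1}\beta_{i,j}^{(n,l)}r_j^l\Bigr).
\]

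It then remains to show that the last bracket is $\gamma_j h^j\cdot O(h^2)$, i.e.\ $O(h^{j+2})$. The key structural observation, read off Lemma~\ref{lemma1}, is that for \emph{every} $i\ge0$ and every $j\ge1$ one has $\beta_{i,j}^{(n,l)}=O(h)$ — the only entry that can fail to carry a factor $h$ is the $(i,j)=(0,0)$ one, which is excluded since $j\ge m\ge1$. Consequently $\sum_{l=0}^{n-1}\lvert\beta_{i,j}^{(n,l)}\rvert\le N\cdot O(h)=O(1)$, and, together with $\beta_{i,j}^n=O(1)$ and $\lvert r_j^k\rvert\le Ch^{j+2}$, the last bracket is indeed $O(h^{j+2})=\gamma_j h^j\cdot O(h^2)$. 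Summing over $j=m,\dots,p-1$, restoring the $j=p$ term and the two remainder terms (all $O(h^p)$), and putting back the overall minus sign yields exactly the claimed expansion (note that for $p=m$ the sum is empty and $b_i^n=O(h^p)$). The only genuinely delicate point is the accumulation of the history sum $l=0,\dots,n-1$, which a priori costs a factor $N=T/h$; this is absorbed precisely by the extra $h$ carried by every off-diagonal weight $\beta_{i,j}^{(n,l)}$ with $j\ge1$. Everything else is the triangle inequality combined with Lemmas~\ref{lemma1} and~\ref{lemma6}.
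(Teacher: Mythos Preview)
Your proof plan is correct and follows exactly the natural route: bound the two remainder integrals using the uniform estimate on $R_{p,\cdot}$ from Lemma~\ref{lemma6}, peel off the top index $j=p$ using $Y_p^k=O(h^p)$ together with $\sum_l|\beta_{i,p}^{(n,l)}|=O(1)$, and for $m\le j\le p-1$ replace $Y_j^k$ by $\gamma_j y^{(j)}(t_{k+1/2})h^j$ at the cost of an $O(h^{j+2})$ remainder, which is absorbed thanks to $\beta_{i,j}^{(n,l)}=O(h)$ whenever $j\ge1$. Note that the paper does not supply its own proof of this lemma; it is quoted verbatim from \cite[Lemma~2.4]{DG2009}, and your argument is precisely the one that underlies that reference.
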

\begin{lemma}[{\cite[Lemmas 2.2 and 3.1]{DG2009}}]\label{lemma3}
	Define the $m$-dimensional column vectors
	\begin{align*}
		\bm{e}_i&:=(0,\ldots,0,\underbrace{1}_{i},0,\ldots,0)^T,\\
		\bm{w}&:= 2(2m-1)(1,3,5,\ldots,2m-3,2m-1)^T,\\
		\bm{v}&:=\begin{cases}
			2(1,0,5,\ldots,0,2m-1)^T&\text{if $m$ is odd},\\
			-2(0,3,0,7,\dots,0,2m-1)^T&\text{if $m$ is even},
		\end{cases}
	\end{align*}
	and the $m$-dimensional row vector
	\[
	\bm{\tilde{v}}:=\begin{cases}
		2(1,0,5,\ldots,0,2m-1)&\text{if $m$ is odd}, \\
		2(0,3,0,7,\ldots,0,2m-1)&\text{if $m$ is even}.
	\end{cases}
	\]
	Then,
	\begin{gather*}
		\bm{M}^{-1}\bm{e}_0=\bm{v},\qquad
		\bm{M}^{-1}\bm{e}_1=6\bm{e}_0-3\bm{v},\qquad
		\bm{M}^{-1}\bm{e}_{m-1}=\bm{w},\qquad
		\bm{e}_0^T\bm{M}^{-1}=\bm{\tilde{v}},\\
		\bm{M}^{-1}\bm{N}=\left(\bm{v},\bm{0},\dots,\bm{0}\right),\qquad
		\bm{N}\bm{M}^{-1}=\begin{pmatrix}
			\bm{\tilde{v}} \\ \bm{0}\\ \vdots \\ \bm{0} \\ \end{pmatrix},\qquad
	\end{gather*}
	and moreover $\bm{M}^{-1}\bm{N}\bm{z}=z_0\bm{v}$ and
	$\bm{N}\bm{z}=z_0\bm{e}_0$ for any
	$\bm{z}=(z_0, z_1, \ldots, z_{m-1})^T\in\mathbb{R}^m$.
\end{lemma}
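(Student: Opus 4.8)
The plan is to treat all six assertions as finite linear-algebra identities about the explicit tridiagonal matrix $\bm{M}$ --- whose only nonzero entries are $M_{00}=\tfrac12$, $M_{i,i-1}=\alpha_i$ and $M_{i-1,i}=-\alpha_i$ for $1\le i\le m-1$ --- together with the rank-one matrix $\bm{N}=\bm{e}_0\bm{e}_0^T$. Since $\bm{M}$ is nonsingular by Lemma~\ref{l:M}, it suffices to verify the ``forward'' forms $\bm{M}\bm{v}=\bm{e}_0$, $\bm{M}(6\bm{e}_0-3\bm{v})=\bm{e}_1$, $\bm{M}\bm{w}=\bm{e}_{m-1}$ and $\bm{\tilde{v}}\bm{M}=\bm{e}_0^T$, and then to read off the $\bm{N}$-statements. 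The one algebraic fact I would use throughout is the factorization $8j^2-2=2(2j-1)(2j+1)$, i.e. $\alpha_j=\tfrac{1}{2(2j-1)(2j+1)}$, which gives $\alpha_j(2j-1)=\alpha_{j+1}(2j+3)=\tfrac{1}{2(2j+1)}$; the equality of these two quantities is the engine behind every cancellation below.

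First I would check $\bm{M}\bm{v}=\bm{e}_0$ row by row. Because $\bm{M}$ has zero diagonal away from the top-left corner, row $i$ of $\bm{M}\bm{v}$ equals $\alpha_i v_{i-1}-\alpha_{i+1}v_{i+1}$ for $1\le i\le m-2$, row $0$ equals $\tfrac12 v_0-\alpha_1 v_1$, and row $m-1$ equals $\alpha_{m-1}v_{m-2}$. The vector $\bm{v}$ is supported on alternate indices (the even ones when $m$ is odd, the odd ones when $m$ is even), with $|v_i|=2(2i+1)$ on that support. Hence a row whose two neighbouring indices both miss the support is automatically zero; an interior row with both neighbours on the support has the form $\pm 2\bigl(\alpha_i(2i-1)-\alpha_{i+1}(2i+3)\bigr)=0$ by the identity above; row $0$ evaluates to $1$ in either parity (using $\alpha_1=\tfrac16$); and row $m-1$ is zero because the index $m-2$ lies off the support. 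This establishes $\bm{M}^{-1}\bm{e}_0=\bm{v}$.

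The remaining forward identities go the same way. For $\bm{M}^{-1}\bm{e}_1$ I would use that the first column of $\bm{M}$ is $(\tfrac12,\tfrac16,0,\dots,0)^T$, so $6\bm{M}\bm{e}_0=(3,1,0,\dots,0)^T$ and therefore $\bm{M}(6\bm{e}_0-3\bm{v})=6\bm{M}\bm{e}_0-3\bm{e}_0=\bm{e}_1$. For $\bm{M}\bm{w}=\bm{e}_{m-1}$ the identical row expansion with $w_j=2(2m-1)(2j+1)$ gives $0$ on every interior row and on row $0$ (all by telescoping), while row $m-1$ gives $2(2m-1)\,\alpha_{m-1}(2m-3)=1$. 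For $\bm{\tilde{v}}\bm{M}=\bm{e}_0^T$ I would run the same argument along the \emph{columns} of $\bm{M}$; here $\bm{\tilde{v}}$ coincides with $\bm{v}^T$ when $m$ is odd and with $-\bm{v}^T$ when $m$ is even, but in both cases the column sums telescope identically, yielding $1$ in the first column and $0$ in all others. Finally, writing $\bm{N}=\bm{e}_0\bm{e}_0^T$ gives $\bm{M}^{-1}\bm{N}=(\bm{M}^{-1}\bm{e}_0)\bm{e}_0^T=\bm{v}\bm{e}_0^T=(\bm{v},\bm{0},\dots,\bm{0})$ and $\bm{N}\bm{M}^{-1}=\bm{e}_0(\bm{e}_0^T\bm{M}^{-1})=\bm{e}_0\bm{\tilde{v}}$, a matrix with first row $\bm{\tilde{v}}$ and all other rows zero; and for $\bm{z}=(z_0,\dots,z_{m-1})^T$ one gets $\bm{M}^{-1}\bm{N}\bm{z}=\bm{v}(\bm{e}_0^T\bm{z})=z_0\bm{v}$ and $\bm{N}\bm{z}=\bm{e}_0(\bm{e}_0^T\bm{z})=z_0\bm{e}_0$.

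I expect no analytic obstacle: the whole argument is a finite verification. The only thing that needs care is the index bookkeeping --- aligning the support patterns of $\bm{v}$ and $\bm{\tilde{v}}$ so that the telescoping relation lands exactly on the ``active'' rows and columns, and treating the two boundary rows $i=0$ and $i=m-1$ separately, since the parity of $m$ decides which of them contributes the $1$ and which is forced to $0$. If one prefers a more structural route, $\bm{M}$ is recognisable as the matrix of the Volterra operator $(Vu)(t)=\int_0^t u(s)\,\ud s$ in the shifted-Legendre basis, obtainable from $\int_0^s P_j(\tau)\,\ud\tau=\tfrac{1}{2(2j+1)}\bigl(P_{j+1}(s)-P_{j-1}(s)\bigr)$ for $j\ge1$, and the columns of $\bm{M}^{-1}$ then follow by expanding monomials in Legendre polynomials; but the direct row/column verification above is the shortest.
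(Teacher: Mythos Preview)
Your proof is correct. The paper does not actually prove this lemma; it merely cites it from \cite[Lemmas~2.2 and~3.1]{DG2009}, so there is no ``paper's own proof'' to compare against. Your approach --- invert each identity to a forward product with $\bm{M}$ and verify it row by row using the telescoping relation $\alpha_j(2j-1)=\alpha_{j+1}(2j+3)$ --- is exactly the natural direct verification, and your treatment of the boundary rows and of the two parities is accurate. The derivation of the $\bm{N}$-identities from $\bm{N}=\bm{e}_0\bm{e}_0^T$ is immediate, as you note.
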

\begin{lemma}[{\cite[Lemma 3.1]{DG2009}}]\label{lemma4}
	When $m$ is odd,
	\[
	\bm{G}:=-2\bm{M}^{-1}\bm{N}+(\bm{M}^{-1}\bm{N})^2=\bm{0}
	\quad\text{and}\quad
	~(\bm{I}_m-\bm{M}^{-1}\bm{N})^2=\bm{I}_m.
	\]
\end{lemma}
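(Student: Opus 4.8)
The plan is to reduce everything to the explicit description of $\bm{M}^{-1}\bm{N}$ already recorded in Lemma~\ref{lemma3}. There we are told that $\bm{M}^{-1}\bm{N}=(\bm{v},\bm{0},\dots,\bm{0})$, that is, it is the $m\times m$ matrix whose first column is the vector~$\bm{v}$ and whose remaining columns vanish. Equivalently, writing $\bm{e}_0=(1,0,\dots,0)^T$, we have the rank-one factorization $\bm{M}^{-1}\bm{N}=\bm{v}\,\bm{e}_0^T$. This single observation is essentially the whole argument; the rest is elementary matrix algebra.

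First I would square this rank-one matrix. Since $\bm{M}^{-1}\bm{N}=\bm{v}\,\bm{e}_0^T$, associativity gives
\[
(\bm{M}^{-1}\bm{N})^2=\bm{v}\,(\bm{e}_0^T\bm{v})\,\bm{e}_0^T=(\bm{e}_0^T\bm{v})\,\bm{M}^{-1}\bm{N},
\]
so that $\bm{M}^{-1}\bm{N}$ satisfies the scalar relation $(\bm{M}^{-1}\bm{N})^2=v_0\,\bm{M}^{-1}\bm{N}$, where $v_0=\bm{e}_0^T\bm{v}$ is the first entry of~$\bm{v}$. (Alternatively, this follows from the identity $\bm{M}^{-1}\bm{N}\bm{z}=z_0\bm{v}$ of Lemma~\ref{lemma3} applied with $\bm{z}=\bm{v}$.)

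Next I would invoke the hypothesis that $m$ is odd: from the case definition of~$\bm{v}$ in Lemma~\ref{lemma3}, when $m$ is odd one has $\bm{v}=2(1,0,5,\dots,0,2m-1)^T$, hence $v_0=2$. Substituting into the relation above yields $(\bm{M}^{-1}\bm{N})^2=2\,\bm{M}^{-1}\bm{N}$, and therefore
\[
\bm{G}=-2\bm{M}^{-1}\bm{N}+(\bm{M}^{-1}\bm{N})^2=-2\bm{M}^{-1}\bm{N}+2\bm{M}^{-1}\bm{N}=\bm{0},
\]
which is the first claim. For the second claim I would simply expand the square and use what was just proved:
\[
(\bm{I}_m-\bm{M}^{-1}\bm{N})^2=\bm{I}_m-2\bm{M}^{-1}\bm{N}+(\bm{M}^{-1}\bm{N})^2=\bm{I}_m+\bm{G}=\bm{I}_m.
\]

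There is no genuine obstacle here: the only point requiring care is correctly reading off $v_0=2$ for odd~$m$ from the case definition of~$\bm{v}$. Note that for even~$m$ the first entry of~$\bm{v}$ vanishes, so one would get $(\bm{M}^{-1}\bm{N})^2=\bm{0}$ instead and the identity $\bm{G}=\bm{0}$ fails; this is precisely why the oddness hypothesis is essential. Everything else relies only on the already-established structural identities for $\bm{M}^{-1}\bm{N}$.
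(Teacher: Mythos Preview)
Your argument is correct. The paper does not supply its own proof of this lemma; it is simply quoted from \cite[Lemma~3.1]{DG2009}, so there is nothing to compare against directly. Your approach---writing $\bm{M}^{-1}\bm{N}=\bm{v}\,\bm{e}_0^T$ via Lemma~\ref{lemma3}, reading off $v_0=2$ for odd~$m$, and deducing $(\bm{M}^{-1}\bm{N})^2=2\,\bm{M}^{-1}\bm{N}$---is exactly the natural one and is almost certainly what the cited reference does as well, since both Lemma~\ref{lemma3} and Lemma~\ref{lemma4} are imported from the same source.
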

\begin{lemma}[{\cite[Lemma 3.3]{gao2023}}]\label{lemma5}
	When $m$ is even,
	\[
	\bm{N}\bm{M}^{-1}\bm{N}=\bm{0}\quad\text{and}\quad
	\bm{N}\bm{M}^{-1}\bm{D}_{pq}^{(n,l)}=6[\partial_1K_{pq}(t_n,t_l)]\bm{N}
	\quad\text{for $0\leq l \leq n \leq N-1$.}
	\]
\end{lemma}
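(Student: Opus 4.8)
The plan is to reduce both identities to the explicit action of $\bm{M}^{-1}$ on the first coordinate vector, which is already recorded in Lemma~\ref{lemma3}. First I would rewrite the two sparse matrices involved in rank-one form: $\bm{N}=\bm{e}_0\bm{e}_0^T$, and, since the only nonzero entries of $\bm{D}_{pq}^{(n,l)}$ occupy positions $(0,0)$, $(0,1)$ and $(1,0)$,
\[
\bm{D}_{pq}^{(n,l)}=3K_{pq}'(t_n,t_l)\,\bm{e}_0\bm{e}_0^T+\partial_2K_{pq}(t_n,t_l)\,\bm{e}_0\bm{e}_1^T+\partial_1K_{pq}(t_n,t_l)\,\bm{e}_1\bm{e}_0^T.
\]
By Lemma~\ref{lemma3} we have $\bm{e}_0^T\bm{M}^{-1}=\bm{\tilde{v}}$, hence $\bm{N}\bm{M}^{-1}=\bm{e}_0\bm{e}_0^T\bm{M}^{-1}=\bm{e}_0\bm{\tilde{v}}$; that is, $\bm{N}\bm{M}^{-1}$ is the matrix whose first row equals $\bm{\tilde{v}}$ and whose other rows vanish. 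For $m$ even, Lemma~\ref{lemma3} gives $\bm{\tilde{v}}=2(0,3,0,7,\dots,0,2m-1)$, so $\bm{\tilde{v}}\bm{e}_0=\tilde{v}_0=0$ and $\bm{\tilde{v}}\bm{e}_1=\tilde{v}_1=6$; these two inner products are all that the computations below require.

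For the first identity, I would compute $\bm{N}\bm{M}^{-1}\bm{N}=(\bm{e}_0\bm{\tilde{v}})(\bm{e}_0\bm{e}_0^T)=(\bm{\tilde{v}}\bm{e}_0)\,\bm{e}_0\bm{e}_0^T=\tilde{v}_0\,\bm{N}=\bm{0}$, using $\tilde{v}_0=0$ for $m$ even. (Equivalently, one may invoke $\bm{M}^{-1}\bm{N}=\bm{v}\bm{e}_0^T$ together with $\bm{N}\bm{v}=v_0\bm{e}_0$ and $v_0=0$, all from Lemma~\ref{lemma3}.)

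For the second identity, since $\bm{N}\bm{M}^{-1}=\bm{e}_0\bm{\tilde{v}}$, I would left-multiply the rank-one decomposition of $\bm{D}_{pq}^{(n,l)}$ by $\bm{e}_0\bm{\tilde{v}}$. The first two terms carry the factor $\bm{\tilde{v}}\bm{e}_0=\tilde{v}_0=0$ and therefore drop out, while the third term yields $\partial_1K_{pq}(t_n,t_l)\,(\bm{\tilde{v}}\bm{e}_1)\,\bm{e}_0\bm{e}_0^T=6\,\partial_1K_{pq}(t_n,t_l)\,\bm{N}$, since $\bm{\tilde{v}}\bm{e}_1=6$ and $\bm{e}_0\bm{e}_0^T=\bm{N}$. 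This is precisely the asserted formula.

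I do not expect a genuine obstacle here: once the parity-dependent zero pattern of $\bm{\tilde{v}}=\bm{e}_0^T\bm{M}^{-1}$ is available from Lemma~\ref{lemma3}, the present lemma is a short corollary. The only point that needs care is that both conclusions rely specifically on $m$ being \emph{even} — concretely, on the vanishing of the leading component $\tilde{v}_0$, which fails when $m$ is odd — matched against the sparsity patterns of $\bm{N}$ and of $\bm{D}_{pq}^{(n,l)}$. In short, the entire content is the bookkeeping of which components of $\bm{\tilde{v}}$ equal $0$ and which equal $6$.
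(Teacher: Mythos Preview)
Your proof is correct. The paper does not supply its own proof of this lemma---it is quoted verbatim from \cite[Lemma~3.3]{gao2023}---so there is nothing to compare against here; your argument via the rank-one factorizations $\bm{N}=\bm{e}_0\bm{e}_0^T$ and $\bm{D}_{pq}^{(n,l)}=3K_{pq}'\bm{e}_0\bm{e}_0^T+\partial_2K_{pq}\,\bm{e}_0\bm{e}_1^T+\partial_1K_{pq}\,\bm{e}_1\bm{e}_0^T$, combined with $\bm{e}_0^T\bm{M}^{-1}=\bm{\tilde v}$ and the even-$m$ values $\tilde v_0=0$, $\tilde v_1=6$ from Lemma~\ref{lemma3}, is a clean and complete derivation.
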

\begin{lemma}\label{lemma8}
	Assume that $k\in C^{d+1}(D)$ with $d\geq 0$, then for all $i$, $j\geq 0$ and
	$0 \leq n \leq N-1$,
	\[
	\beta_{ij}^{n}-\beta_{ij}^{n-1}=\begin{cases}
		hk'(t_n,t_n)/2+O(h^2)&\text{if $(i,j)=(0,0)$,}\\
		-h\alpha_jk'(t_n,t_n)+O(h^2)&\text{if $i=j-1$ and $j\geq1$,}\\
		h\alpha_ik'(t_n,t_n)+O(h^2)&\text{if $j=i-1$ and $i\geq 1$,}\\
		O(h^2)&\text{if $i=j>0$,}\\
		O(h^{|i-j|})+O(h^{d+2})&\text{otherwise},
	\end{cases}
	\]
	where $\alpha_j$ is defined in Lemma~\ref{lemma1}. Further, if $d \geq 2$,
	then for $l=0$, \dots, $n-1$,
	\[
	\beta_{ij}^{{(n,l)}}-\beta_{ij}^{{(n-1,l)}}=\begin{cases}
		h\partial_1k(t_n,t_l)+h^2[\partial_1\partial_2k(t_n,t_l)]/2+O(h^3),
		&\text{if $(i,j)=(0,0)$,}\\
		h^2[\partial_1\partial_{j+1}k(t_n,t_l)]/6+O(h^3),
		&\text{if $(i,j)\in\{(0,1), (1,0)\}$,}\\
		O(h^3)&\text{if $(i,j)\in\{(0,2), (2,0), (1,1)\}$,}\\
		O(h^{i+j+1})+O(h^{d+2})&\text{otherwise}.
	\end{cases}
	\]
\end{lemma}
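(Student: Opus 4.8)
The plan is to treat both identities in the same way: each left-hand side is the value at $t=t_n$ of a fixed function of $t$ (with $h$ frozen) whose value at $t=t_{n-1}=t_n-h$ gives the shifted quantity, so the difference equals $h$ times a $t$-derivative up to $O(h^2)$, and a $t$-derivative of this $\beta$-functional simply differentiates the kernel. For the first identity I would subtract the two integral definitions to obtain
\[
\beta_{ij}^{n}-\beta_{ij}^{n-1}=\int_{0}^{1}\!\left(\int_{0}^{s}\big[k(t_n+sh,t_n+\tau h)-k(t_{n-1}+sh,t_{n-1}+\tau h)\big]P_j(\tau)\,\ud\tau\right)P_i(s)\,\ud s,
\]
and then, using $t_n=t_{n-1}+h$, expand the bracket by Taylor's theorem in the diagonal direction: $k(t_n+sh,t_n+\tau h)-k(t_{n-1}+sh,t_{n-1}+\tau h)=h\,k'(t_{n-1}+sh,t_{n-1}+\tau h)+O(h^2)$ uniformly in $s,\tau$, with $k'=(\partial_1+\partial_2)k$. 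This reduces the difference to $h\,\tilde\beta_{ij}^{\,n-1}+O(h^2)$, where $\tilde\beta_{ij}^{\,n-1}$ is the quantity $\beta_{ij}^{n-1}$ formed with $k'$ in place of $k$; applying Lemma~\ref{lemma1} to the kernel $k'$ and then replacing $t_{n-1}$ by $t_n$ modulo $O(h)$ via the continuity of $k'$, the five cases of that lemma ($\tfrac12 k'$, $-\alpha_j k'$, $\alpha_i k'$, $O(h)$, $O(h^{|i-j|-1})$), each multiplied by $h$, yield the five stated cases.

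For the second identity only the first argument of $k$ is shifted, so the analogous subtraction gives an integrand $k(t_n+sh,t_l+\tau h)-k(t_{n-1}+sh,t_l+\tau h)$ that is a first difference in the first variable alone. Here I would instead subtract the finer expansions supplied by the second part of Lemma~\ref{lemma1} (legitimate since $d\geq2$), whose coefficients are values of $k$, of $k'=(\partial_1+\partial_2)k$, and of the coefficients $d^{(n,l)}_{i,j}$ at $(t_n,t_l)$ versus $(t_{n-1},t_l)=(t_n-h,t_l)$. Taylor-expanding these coefficient functions in $h$ and collecting terms, the $(0,0)$, $(0,1)$ and $(1,0)$ entries produce the displayed terms; the crucial point is that the $h^2\partial_1^2k$ contributions cancel — the $-\tfrac{h^2}{2}\partial_1^2k(t_n,t_l)$ coming from the second-order expansion of $k(t_n,t_l)-k(t_n-h,t_l)$ is annihilated by the $+\tfrac{h^2}{2}\partial_1^2k(t_n,t_l)$ arising in $\tfrac h2\big[k'(t_n,t_l)-k'(t_n-h,t_l)\big]=\tfrac{h^2}{2}(\partial_1^2k+\partial_1\partial_2k)(t_n,t_l)+O(h^3)$ — leaving exactly $h\partial_1k+\tfrac{h^2}{2}\partial_1\partial_2k$ in the $(0,0)$ case and $\tfrac{h^2}{6}\partial_1\partial_{j+1}k$ in the $(0,1)$ and $(1,0)$ cases, while for $(0,2),(2,0),(1,1)$ only the $h^2$-level coefficient occurs and its first difference is $O(h)$, contributing $O(h^3)$. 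For the remaining indices I would combine the estimate $\beta^{(n,l)}_{ij}=O(h^{i+j})+O(h^{d+1})$ of Lemma~\ref{lemma1} with the extra factor $h$ of the difference structure — equivalently, integrate the first difference $h\int_0^1\partial_1 k(t_{n-1}+(s+\theta)h,\,t_l+\tau h)\,\ud\theta$ against $P_j(\tau)P_i(s)$ and invoke orthogonality of the Legendre polynomials — to obtain $O(h^{i+j+1})+O(h^{d+2})$.

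The routine but delicate part, which I expect to be the only real obstacle, is this bookkeeping: tracking, through the successive Taylor expansions and the substitutions $t_{n-1}\leftrightarrow t_n$, exactly which partial derivatives of $k$ survive and with which powers of $h$, and checking that the spurious $\partial_1^2k$ terms cancel so that the error orders come out as stated. Everything else is a direct transcription of Lemma~\ref{lemma1}.
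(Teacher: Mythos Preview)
Your proposal is correct and matches the paper's own justification, which is simply that the lemma ``can be obtained by a simple calculation based on Lemma~\ref{lemma1}''; your write-up is precisely that calculation carried out in detail, including the correct identification of the $\partial_1^2k$ cancellation in the $(0,0)$ entry of the second identity. The only minor variant is that for the first identity you Taylor-expand the integrand difference first and then invoke Lemma~\ref{lemma1} for the kernel $k'$, rather than subtracting two instances of Lemma~\ref{lemma1} and expanding the coefficients---but these are the same computation in a different order.
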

Now we state the main convergence result of the perturbed DG solution for
the first-kind VIE~\eqref{V1}.

\begin{theorem}\label{perturbed}
	Assume that $g\in C^{m+3}(I)$ and $k\in C^{m+3}(D)$, and let $y_h\in
	S_{m-1}^{(-1)}(I_h)$ be the perturbed DG solution for~\eqref{V1} defined
	by~\eqref{yh} with~$\delta=O(h^{m_1})$. Then, for all uniform meshes~$I_h$
	with sufficiently small~$h$, $y_h$ converges uniformly to the solution~$y$
	of~\eqref{V1}. The corresponding attainable global order of convergence is given
	by
	\[
	\|y-y_h\|_\infty:=\sup_{t\in I}\left|y(t)-y_h(t)\right|\leq\begin{cases}
		O(h^{\min\{m, m_1-2\}})&\text{if $m$ is odd},\\
		O(h^{\min\{m-1, m_1-2\}})&\text{if $m$ is even}.
	\end{cases}
	\]
\end{theorem}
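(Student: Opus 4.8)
The plan is to work entirely with the algebraic error recursion \eqref{special_algebraic_err} for the Legendre coefficients $E_j^n$, following the strategy of Brunner et al.~\cite{DG2009} but carrying the perturbation term $\delta_{n,i}/h = O(h^{m_1-1})$ through the estimates. First I would rewrite \eqref{special_algebraic_err} in matrix form. Writing $\bm{E}_n:=(E_0^n,\dots,E_{m-1}^n)^T$, and using Lemma~\ref{lemma2} to expand $\bm{B}^n$ (the matrix $(\beta_{i,j}^n)$) as $k(t_n,t_n)\bm{M}+\bm{O}(h)$ and $\bm{B}^{(n,l)}$ as $k(t_n,t_l)\bm{N}+\bm{O}(h)$, the recursion becomes
\[
k(t_n,t_n)\bm{M}\bm{E}_n = -\,k(t_n,t_n)\bm{N}\!\!\sum_{l=0}^{n-1}\!\bm{E}_l + \bm{O}(h)\sum_{l=0}^{n}\|\bm{E}_l\| + h^m\bm{\rho}_n + \tfrac1h\bm{\delta}_n + O(h^{m+1}).
\]
Dividing by $k(t_n,t_n)$ (legitimate since $|k(t,t)|\ge k_0>0$) and applying $\bm{M}^{-1}$ gives $\bm{E}_n = -\bm{M}^{-1}\bm{N}\sum_{l=0}^{n-1}\bm{E}_l + (\text{perturbations})$. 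Introducing the partial sums $\bm{S}_n:=\sum_{l=0}^{n}\bm{E}_l$, subtracting consecutive instances of this relation (and here using Lemma~\ref{lemma8} to control the differences $\bm{B}^n-\bm{B}^{n-1}$, $\bm{B}^{(n,l)}-\bm{B}^{(n-1,l)}$), one obtains a first-order recursion of the form $\bm{S}_n = (\bm{I}_m-\bm{M}^{-1}\bm{N})\bm{S}_{n-1} + \bm{r}_n$, where the inhomogeneity satisfies $\|\bm{r}_n\|\le C h\big(h^{m-1}+h^{m_1-2}\big)+ Ch\sum_{l<n}\|\bm{E}_l\|$ — the crucial point being that the single factor $1/h$ from the perturbation combines with one factor of $h$ to leave $h^{m_1-1}$, and then the telescoping/iteration over $O(1/h)$ steps costs at most one more power of $h^{-1}$, yielding the net loss of two orders relative to $m_1$.

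The odd/even dichotomy then enters through the spectral structure of the iteration matrix $\bm{I}_m-\bm{M}^{-1}\bm{N}$. When $m$ is odd, Lemma~\ref{lemma4} gives $(\bm{I}_m-\bm{M}^{-1}\bm{N})^2=\bm{I}_m$, so iterating the recursion for $\bm{S}_n$ does not amplify: $\|\bm{S}_n\|$ stays bounded by $C\sum_{l\le n}\|\bm{r}_l\| \le C\big(h^{m-1}+h^{m_1-2}\big)$ after the discrete Gronwall argument absorbs the $\sum\|\bm{E}_l\|$ tail. Since $\bm{E}_n = \bm{S}_n-\bm{S}_{n-1}$, one gets $\|\bm{E}_n\|\le C(h^{m-1}+h^{m_1-2})$; but one must recover the extra order for the component $E_0^n$ (and hence for $\|y-y_h\|_\infty$, which is dominated by $|E_0^n|$ up to $O(h^m)$ terms from \eqref{special-local-err}). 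This last step uses the identities $\bm{e}_0^T\bm{M}^{-1}\bm{N}=\bm{\tilde v}$-type relations from Lemma~\ref{lemma3}: projecting the $\bm{S}_n$-recursion onto a suitable left eigenvector of $\bm{I}_m-\bm{M}^{-1}\bm{N}$ kills the leading $h^{m-1}$ contribution and shows $E_0^n=O(h^m)+O(h^{m_1-2})$, so $\|y-y_h\|_\infty = O(h^{\min\{m,m_1-2\}})$. When $m$ is even, $(\bm{I}_m-\bm{M}^{-1}\bm{N})^2\ne\bm{I}_m$ and Lemma~\ref{lemma5} ($\bm{N}\bm{M}^{-1}\bm{N}=\bm{0}$) is the relevant structural fact; the iteration is controlled by observing that $\bm{N}$ annihilates everything but the zeroth coordinate, so the recursion effectively closes on the scalar sequences $S_0^n$ and the argument yields only $\|\bm{E}_n\|\le C(h^{m-1}+h^{m_1-2})$ with no gain, giving $\|y-y_h\|_\infty=O(h^{\min\{m-1,m_1-2\}})$.

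The main obstacle I anticipate is the bookkeeping in the discrete Gronwall step: one must show that the $\sum_{l<n}\|\bm{E}_l\|$ terms generated by the $\bm{O}(h)$ remainders in Lemmas~\ref{lemma2} and~\ref{lemma8} can be absorbed without degrading the order, which requires first establishing a crude a priori bound (e.g. $\|\bm{E}_n\|\le C h^{-1}(h^{m-1}+h^{m_1-1})$, or simply $\|\bm{E}_n\|=O(1)$) and then bootstrapping. A secondary subtlety is that the perturbation $\delta=O(h^{m_1})$ is only assumed in sup-norm, so one should check that $\delta_{n,i}=-\int_0^1\delta(t_n+sh)P_i(s)\,\ud s$ is genuinely $O(h^{m_1})$ uniformly in $n,i$ — which is immediate since the Legendre polynomials are bounded on $[0,1]$ — and that no hidden $h^{-1}$ beyond the one displayed in \eqref{special_algebraic_err} creeps in. Everything else is a careful but routine transcription of the Brunner–Zhang technique with the extra inhomogeneous term carried along.
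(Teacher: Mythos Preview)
Your odd-$m$ argument has a genuine gap. The claim that ``$\|y-y_h\|_\infty$ is dominated by $|E_0^n|$ up to $O(h^m)$ terms from \eqref{special-local-err}'' is false: the local error representation reads $e(t_n+sh)=\sum_{j=0}^{m-1}P_j(s)E_j^n+O(h^m)$, so \emph{all} Legendre coefficients $E_j^n$ contribute equally to the sup-norm error. Your partial-sum recursion $\bm S_n=(\bm I_m-\bm M^{-1}\bm N)\bm S_{n-1}+\bm r_n$ with $\bm r_n=O(h^m)+O(h^{m_1-1})$, iterated over $O(1/h)$ steps with $\|\bm H^k\|=O(1)$, yields only $\|\bm S_n\|=O(h^{m-1})+O(h^{m_1-2})$ and hence $\|\bm E_n\|=O(h^{m-1})+O(h^{m_1-2})$. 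Improving just $E_0^n$ by an eigenvector projection cannot repair this, because $E_1^n,\dots,E_{m-1}^n$ would still sit at order $m-1$ and force $\|y-y_h\|_\infty=O(h^{m-1})$.

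What the paper actually does is different and gains the missing order up front. It \emph{first} subtracts consecutive error equations, which converts the inhomogeneity $h^m\bm\rho^n$ into $h^{m+1}\hat{\bm\rho}^n$ (this is where Lemma~\ref{lemma8} is used), giving $\bm E_n=[\bm H+O(h)]\bm E_{n-1}+h\sum_l[\bm H^{(n,l)}+O(h)]\bm E_l+O(h^{\nu_1})$ with $\nu_1=\min\{m+1,m_1-1\}$. It then forms the \emph{second} combination $\eqref{original-K-odd}_n+[\bm H+O(h)]\eqref{original-K-odd}_{n-1}$; by Lemma~\ref{lemma4} both $\bm H^2=\bm I_m$ \emph{and} $\bm G=-2\bm M^{-1}\bm N+(\bm M^{-1}\bm N)^2=\bm 0$ are used, the latter to kill the leading part of the history sum so that it becomes $O(h^2)\sum_l\bm E_l$ rather than $O(h)\sum_l\bm E_l$. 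The resulting two-step recursion $\bm E_n=[\bm I_m+O(h)]\bm E_{n-2}+\sum_l O(h^2)\bm E_l+O(h^{\nu_1})$ then gives $\|\bm E_n\|=O(h^{\nu_1-1})$ for \emph{all} components at once by a straight Gronwall argument---no eigenvector projection is needed. Your even-$m$ sketch is also not quite on target: the paper does not close a scalar recursion on $S_0^n$, but instead left-multiplies by $\bm F=\bm I_m+\bm N\bm M^{-1}$ \emph{before} inverting $\bm B^n$, forming $\eqref{generalKeven}_n-\bm F\,\eqref{generalKeven}_{n-1}$; the identity $\bm N\bm M^{-1}\bm N=\bm 0$ together with Lemma~\ref{lemma2} then makes $\tilde{\bm Q}^{(n,l)}=O(h^2)$, yielding the stable one-step recursion directly.
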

\begin{proof}
	We divide the proof into two cases.
	
	\noindent\textbf{Case I: $m$ is odd.}
	Using standard techniques for the error estimation of VIEs~\cite{Brunner}, and
	rewriting \eqref{special_algebraic_err} with~$n$ replaced by~$n-1$, then
	subtracting it from \eqref{special_algebraic_err}, we have
	\[
	\sum_{j=0}^{m-1}\beta^n_{i,j}E_j^n
	=\sum_{j=0}^{m-1}[\beta^{n-1}_{i,j}-\beta^{(n,n-1)}_{i,j}]E_j^{n-1}
	-h\sum_{l=0}^{n-2}\sum_{j=0}^{m-1}\hat{\beta}^{(n,l)}_{i,j}E_j^l
	+h^{m+1}\hat{\rho}^n_i+\hat{\delta}_{n,i}+O(h^{m+1}),
	\]
	for $i$,$j=0$, $1$, \dots, $m-1$, where
	\[
	\hat{\beta}^{(n,l)}_{i,j}
	:=\tfrac{1}{h}[\beta_{i,j}^{(n,l)}-\beta_{i,j}^{(n-1,l)}], \quad
	\hat{\delta}_{n,i}:=\tfrac{1}{h}(\delta_{n,i}-\delta_{n-1,i}),\quad
	\hat{\rho}^n_i:=\tfrac{1}{h}(\rho^n_i-\rho^{n-1}_i).
	\]
	Define the $m$-dimensional vectors
	\[
	\bm{E}_n:=(E^n_0, E^n_1, \dots, E^n_{m-1})^{T}, \quad
	\bm{\hat{\rho}}_{n}
	:=(\hat{\rho}^n_0, \hat{\rho}^n_1, \dots, \hat{\rho}^n_{m-1})^{T}, \quad
	\bm{\hat{\delta}}_{n}
	:=(\hat{\delta}_{n,0}, \hat{\delta}_{n,1}, \dots, \hat{\delta}_{n,m-1})^{T},
	\]
	and the $m\times m$ matrices
	\begin{gather*}
		\bm{B}^n:=(\beta^n_{i,j}),\quad
		\bm{B}^{(n,l)}:=(\beta^{(n,l)}_{i,j}),\quad
		\bm{Q}^{(n,l)}:=(\beta^{l}_{i,j}-\beta^{(n,l)}_{i,j})
		=\bm{B}^{l}-\bm{B}^{(n,l)},\\
		\bm{W}^{(n,l)}:=(-\hat{\beta}^{(n,l)}_{i,j})
		= -\tfrac{1}{h}[\bm{B}^{(n,l)}-\bm{B}^{(n-1,l)}],
	\end{gather*}
	with indices $i$, $j=0$, $1$, \dots, $m-1$. We then have
	\[
	\bm{B}^n\bm{E}_n=\bm{Q}^{(n,n-1)}\bm{E}_{n-1}
	+h\sum_{l=0}^{n-2}\bm{W}^{(n,l)}\bm{E}_l
	+h^{m+1}\bm{\hat{\rho}}_{n}+\hat{\bm{\delta}}_n+\bm{O}(h^{m+1}).
	\]
	By Lemma~\ref{lemma2}, we know that $\bm{B}^n=(k(t_n,t_n)+O(h))\bm{M}$;
	further, by Lemma~\ref{l:M}, the matrix~$\bm{M}$ is nonsingular, so for
	sufficiently small $h$, the matrix $\bm{B}^n$ is also nonsingular with the
	inverse
	\begin{equation}\label{invBn}
		(\bm{B}^n)^{-1}=\tfrac{1}{k(t_n,t_n)}\,\bm{M}^{-1}+\bm{O}(h).
	\end{equation}
	By Lemmas \ref{lemma2}~and \ref{lemma8}, and by Taylor expansion,
	\begin{equation}\label{e:QW}
		\bm{Q}^{(n,n-1)}=k(t_n,t_n)\left(\bm{M}-\bm{N}\right)+\bm{O}(h)
		\quad\text{and}\quad
		\bm{W}^{(n,l)} = -\partial_1k(t_n,t_l)\bm{N}+\bm{O}(h).
	\end{equation}
	Therefore,
	\begin{equation}\label{original-K-odd}
		\bm{E}_n=\bigl[\bm{H}+\bm{O}(h)\bigr]\bm{E}_{n-1}
		+h\sum_{l=0}^{n-2}\bigl[\bm{H}^{(n,l)}+\bm{O}(h)\bigr]\bm{E}_l
		+\bm{O}(h^{\nu_1}),
	\end{equation}
	where $\nu_1:=\min\{m+1, m_1-1\}$ with
	\[
	\bm{H} := \bm{I}_m-\bm{M}^{-1}\bm{N}
	\quad\text{and}\quad
	\bm{H}^{(n,l)}:=-\tfrac{\partial_{1}k(t_n,t_l)}{k(t_n,t_n)}\bm{M}^{-1}\bm{N}.
	\]
	Using $\eqref{original-K-odd}_n+\left[\bm{H}+\bm{O}(h)\right]
	\eqref{original-K-odd}_{n-1}$ and Taylor expansion, we have
	\[
	\bm{E}_n=\bigl[\bm{\bar{H}}+\bm{O}(h)\bigr]\bm{E}_{n-2}
	+h\sum_{l=0}^{n-3}\bigl[\bm{\bar{H}}^{(n,l)}+\bm{O}(h)\bigr]\bm{E}_l
	+\bm{O}(h^{\nu_1}),
	\]
	where
	\[
	\bm{\bar{H}}:= \bm{H}^2 = (\bm{I}_m-\bm{M}^{-1}\bm{N})^2=\bm{I}_m
	\quad\text{and}\quad
	\bm{\bar{H}}^{(n,l)} :=  -\tfrac{\partial_{1}k(t_n,t_l)}{k(t_n,t_n)} \bm{G}
	=\bm{0}_{m}.
	\]
	Here we have used Lemma~\ref{lemma4}. Therefore, the above system can be
	written as
	\begin{equation}\label{global_odd}
		\bm{E}_n=\left[\bm{I}_{m}+\bm{O}(h)\right]\bm{E}_{n-2}
		+\sum_{l=0}^{n-3}\bm{O}(h^2)\bm{E}_l+\bm{O}(h^{\nu_1}),
	\end{equation}
	so by induction,
	\begin{align*}
		\|\bm{E}_n\|_\infty&\leq(1+Ch)\|\bm{E}_{n-2}\|_\infty
		+Ch^2\sum_{l=0}^{n-3}\|\bm{E}_l\|_\infty+Ch^{\nu_1}\\
		& \leq (1+Ch)^{n-2}\|\bm{E}_0\|_\infty
		+Ch^2\sum_{l=0}^{n-3}\left[\sum_{k=0}^{n-3-l}(1+Ch)^k \right]
		\|\bm{E}_l\|_\infty
		+Ch^{\nu_1}\left[\sum_{k=0}^{n-3}(1+Ch)^k\right]\\
		&\leq Ch\sum_{l=0}^{n-3}\|\bm{E}_l\|_\infty+Ch^{\nu_1-1}.
	\end{align*}
	By the discrete Gronwall inequality~\cite[Corollary~2.1.19]{Brunner},
	we have $\|\bm{E}_n\|_\infty\leq Ch^{\nu_1-1}$, which together with
	\eqref{special-local-err} yields the desired result.
	
	\medskip
	
	\noindent{\textbf{Case II: $m$ is even.}} By~\eqref{special_algebraic_err},
	\begin{equation}\label{generalKeven}
		\bm{B}^n\bm{E}_n=-\sum_{l=0}^{n-1}\bm{B}^{(n,l)}\bm{E}_l+h^{m}\bm{\rho}^n
		+\tilde{\bm{\delta}}_n+\bm{O}(h^{m+1}),
	\end{equation}
	where
	\[
	\tilde{\delta}_{n,i}:=\tfrac{1}{h}\delta_{n,i}, \quad
	\bm{\rho}^n:=(\rho^n_0,\rho^n_1,\dots, \rho^n_{m-1})^T, \quad
	\bm{\tilde{\delta}}_n:=(\tilde{\delta}_{n,0},\tilde{\delta}_{n,1},\dots,
	\tilde{\delta}_{n,m-1})^T.
	\]
	Using $\eqref{generalKeven}_n-\bm{F}\eqref{generalKeven}_{n-1}$, with
	$\bm{F}:=\bm{I}_m+\bm{N}\bm{M}^{-1}$, we have
	\begin{equation}\label{even-al-err-eq}
		\bm{B}^n\bm{E}_n=\bm{\tilde{Q}}^n\bm{E}_{n-1}
		+\sum_{l=0}^{n-2}\bm{\tilde{Q}}^{(n,l)}\bm{E}_l+\bm{O}(h^{\nu_2}),
	\end{equation}
	where $\nu_2:=\min\{m, m_1-1\}$ with
	\begin{equation}\label{e:Q}
		\bm{\tilde{Q}}^n:=\bm{F}\bm{B}^{n-1}-\bm{B}^{(n,n-1)}
		\quad\text{and}\quad
		\bm{\tilde{Q}}^{(n,l)}:=\bm{F}\bm{B}^{(n-1,l)}-\bm{B}^{(n,l)}.
	\end{equation}
	By Lemma~\ref{lemma2}, we have
	\[
	\bm{\tilde{Q}}^n
	=(\bm{I}_m+\bm{N}\bm{M}^{-1})\bm{B}^{n-1}-\bm{B}^{(n,n-1)}
	=k(t_n,t_n)\bm{M}+\bm{O}(h)
	=\bm{B}^n + \bm{O}(h),
	\]
	and by Taylor expansion, Lemmas \ref{lemma2}~and \ref{lemma5}, and
	\eqref{e:QW},
	\begin{align*}
		\bm{\tilde{Q}}^{(n,l)}
		&=(\bm{I}_m+\bm{N}\bm{M}^{-1})\bm{B}^{(n-1,l)}-\bm{B}^{(n,l)}
		=\bm{B}^{(n-1,l)}-\bm{B}^{(n,l)}+\bm{N}\bm{M}^{-1}\bm{B}^{(n-1,l)}  \\
		&=-h\bm{W}^{(n,l)}+\bm{N}\bm{M}^{-1}\left[k(t_{n-1},t_l)\bm{N}
		+\tfrac{h}{6}\bm{D}^{(n-1,l)}+\bm{O}(h^2)\right]  \\
		&=-h^2[\partial_1^2 k(\cdot,t_l)]\bm{N}+\bm{O}(h^2)=\bm{O}(h^2).
	\end{align*}
	Then, by \eqref{invBn},
	\[
	\bm{E}_n=\left[\bm{I}_{m}+\bm{O}(h)\right]\bm{E}_{n-1}
	+\sum_{l=0}^{n-2}\bm{O}(h^2)\bm{E}_l+\bm{O}(h^{\nu_2}),
	\]
	and by induction,
	\begin{align*}
		\|\bm{E}_n\|_\infty &\leq (1+Ch)\|\bm{E}_{n-1}\|_\infty
		+Ch^2\sum_{l=0}^{n-2}\|\bm{E}_l\|_\infty+Ch^{\nu_2}\\
		&\leq(1+Ch)^{n}\|\bm{E}_0\|_{\infty}
		+Ch^2\sum_{l=0}^{n-2}\left[\sum_{k=0}^{n-2-l}(1+Ch)^k\right]\|\bm{E}_l\|_\infty
		+Ch^{\nu_2}\left[\sum_{k=0}^{n-1}(1+Ch)^k \right] \\
		&\leq Ch\sum_{l=0}^{n-2}\|\bm{E}_l\|_\infty+Ch^{\nu_2-1}.
	\end{align*}
	Finally, by the discrete Gronwall inequality~\cite[Corollary 2.1.19]{Brunner},
	we obtain $\|\bm{E}_n\|_\infty\leq Ch^{\nu_2-1}$, which together
	with~\eqref{special-local-err} yields the desired result.
\end{proof}

\subsection{Supercovergence analysis of the perturbed DG method for first-kind
	VIEs}\label{subsec:3.2}

First, we give some useful lemmas. For the sake of simplicity,
set $k'_{n,l}:=k'(t_{n},t_{l})/k(t_{n},t_{n})$ for $0 \leq l \leq n \leq N$.

\begin{lemma}\label{lemmaLodd}
	Define the vector~$\bm{q}:=\bm{w}/w_0-\bm{v}/2$, and assume that the matrices
	$\mathbb{A}_n^0$ and $\mathbb{A}_n^l$ are bounded as~$h\to0$.  If
	\[
	\mathbb{V}_n^0:=\mathbb{A}_n^0\bm{M}^{-1}\bm{N}-2k'_{n-1,n-1}\bm{I}_m+\bm{O}(h)
	\quad\text{and}\quad
	\mathbb{V}_n^{l}:=\mathbb{A}_n^l\bm{M}^{-1}\bm{N}+\bm{O}(h)
	\quad\text{for $l\geq 1$.}
	\]
	and if $m$ is odd, then
	\begin{align}
		\mathbb{V}_n^0\bm{q}=-2k'_{n-1,n-1}\bm{q} + \bm{O}(h), \qquad
		\mathbb{V}_n^l\bm{q}= \bm{O}(h)\quad\text{for all $l\geq 1$,}\label{e:001}\\
		(\bm{B}^n)^{-1}\bm{Q}^{(n,n-1)}(\bm{B}^{n-1})^{-1}\bm{Q}^{(n-1,n-2)}
		-h(\bm{B}^n)^{-1}\bm{W}^{(n,n-2)}=\bm{I}_m+h\mathbb{V}_n^0,\label{e:1}\\
		(\bm{B}^n)^{-1}\bm{W}^{(n,l)}
		+(\bm{B}^n)^{-1}\bm{Q}^{(n,n-1)}(\bm{B}^{n-1})^{-1}
		\bm{W}^{(n-1,l)}=h\mathbb{V}_n^{l}\quad\text{for $0\leq l\leq n\leq N-1$.}
		\label{e:2}
	\end{align}
\end{lemma}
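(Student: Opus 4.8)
The plan is to establish the four displayed identities in the order \eqref{e:001}, then \eqref{e:1}, then \eqref{e:2}, relying throughout on the algebraic facts in Lemmas~\ref{lemma3}--\ref{lemma5} (for $m$ odd) together with the asymptotic expansions \eqref{invBn}, \eqref{e:QW}. First I would record the leading-order expansions: from \eqref{invBn}, $(\bm{B}^n)^{-1}=\tfrac{1}{k(t_n,t_n)}\bm{M}^{-1}+\bm{O}(h)$; from \eqref{e:QW}, $\bm{Q}^{(n,n-1)}=k(t_n,t_n)(\bm{M}-\bm{N})+\bm{O}(h)$ and $\bm{W}^{(n,l)}=-\partial_1k(t_n,t_l)\bm{N}+\bm{O}(h)$. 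Hence $(\bm{B}^n)^{-1}\bm{Q}^{(n,n-1)}=\bm{I}_m-\bm{M}^{-1}\bm{N}+\bm{O}(h)=\bm{H}+\bm{O}(h)$ and $(\bm{B}^n)^{-1}\bm{W}^{(n,l)}=-k'_{n,l}\bm{M}^{-1}\bm{N}+\bm{O}(h)$ after recalling $k'_{n,l}=k'(t_n,t_l)/k(t_n,t_n)$ (note here only the $\partial_1k$ piece survives at leading order because $\bm{W}$ came from the difference $\bm{B}^{(n,l)}-\bm{B}^{(n-1,l)}$; this is exactly the content of \eqref{e:QW}). With these in hand, \eqref{e:001} is immediate from Lemma~\ref{lemma3}: by definition $\bm{q}=\bm{w}/w_0-\bm{v}/2$, and since $\bm{M}^{-1}\bm{N}\bm{z}=z_0\bm{v}$ for any $\bm{z}$, the vector $\bm{M}^{-1}\bm{N}\bm{q}$ depends only on the zeroth entry of $\bm{q}$. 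One checks $w_0=2(2m-1)$ so the zeroth entry of $\bm{w}/w_0$ is $1$, while the zeroth entry of $\bm{v}/2$ is $1$ when $m$ is odd; thus $q_0=0$, giving $\bm{M}^{-1}\bm{N}\bm{q}=\bm{0}$. Therefore $\mathbb{V}_n^0\bm{q}=(\mathbb{A}_n^0\bm{M}^{-1}\bm{N})\bm{q}-2k'_{n-1,n-1}\bm{q}+\bm{O}(h)=-2k'_{n-1,n-1}\bm{q}+\bm{O}(h)$ and $\mathbb{V}_n^l\bm{q}=\bm{O}(h)$.

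For \eqref{e:1}, I would compute the left side to second order. Writing $a:=(\bm{B}^n)^{-1}\bm{Q}^{(n,n-1)}$ and $b:=(\bm{B}^{n-1})^{-1}\bm{Q}^{(n-1,n-2)}$, at leading order $a=b=\bm{H}+\bm{O}(h)$, so $ab=\bm{H}^2+\bm{O}(h)=(\bm{I}_m-\bm{M}^{-1}\bm{N})^2+\bm{O}(h)=\bm{I}_m+\bm{O}(h)$ by Lemma~\ref{lemma4}. The point is to extract the $O(h)$ coefficient of $ab-h(\bm{B}^n)^{-1}\bm{W}^{(n,n-2)}$ and identify it with $\mathbb{V}_n^0=\mathbb{A}_n^0\bm{M}^{-1}\bm{N}-2k'_{n-1,n-1}\bm{I}_m+\bm{O}(h)$ for some bounded matrix $\mathbb{A}_n^0$; since the statement only asserts the \emph{existence} of such bounded $\mathbb{A}_n^0$, it suffices to show the $O(h)$ term of $ab-h(\bm{B}^n)^{-1}\bm{W}^{(n,n-2)}$ equals (bounded matrix)$\cdot\bm{M}^{-1}\bm{N}\,-\,2k'_{n-1,n-1}\bm{I}_m$ up to $\bm{O}(h)$. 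The $-2k'_{n-1,n-1}\bm{I}_m$ contribution should come from $-h(\bm{B}^n)^{-1}\bm{W}^{(n,n-2)}=hk'_{n,n-2}\bm{M}^{-1}\bm{N}+\bm{O}(h^2)$ together with the next-order correction in the product $ab$; one uses $k'_{n,n-2}=k'_{n-1,n-1}+O(h)$ and the refined expansion $\bm{G}=-2\bm{M}^{-1}\bm{N}+(\bm{M}^{-1}\bm{N})^2=\bm{0}$ from Lemma~\ref{lemma4} to cancel the non-$\bm{M}^{-1}\bm{N}$ pieces, leaving a residual proportional to $\bm{I}_m$ with coefficient $-2k'_{n-1,n-1}$. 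This bookkeeping — carefully tracking which $O(h)$ terms assemble into a multiple of $\bm{M}^{-1}\bm{N}$ versus a multiple of $\bm{I}_m$ — is the main obstacle; it requires the second-order expansions of $\bm{B}^n$, $\bm{Q}^{(n,l)}$, $\bm{W}^{(n,l)}$ beyond what is displayed, obtained from Lemmas~\ref{lemma1}, \ref{lemma2}, \ref{lemma8}, and the identity $\bm{G}=\bm{0}$ is what makes the obstruction vanish.

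For \eqref{e:2}, I would expand $(\bm{B}^n)^{-1}\bm{W}^{(n,l)}+ab'\,$ where $b':=(\bm{B}^{n-1})^{-1}\bm{W}^{(n-1,l)}$ and $a=(\bm{B}^n)^{-1}\bm{Q}^{(n,n-1)}$. At leading order $(\bm{B}^n)^{-1}\bm{W}^{(n,l)}=-k'_{n,l}\bm{M}^{-1}\bm{N}+\bm{O}(h)$ and $ab'=(\bm{I}_m-\bm{M}^{-1}\bm{N})(-k'_{n-1,l}\bm{M}^{-1}\bm{N})+\bm{O}(h)=-k'_{n-1,l}\bigl(\bm{M}^{-1}\bm{N}-(\bm{M}^{-1}\bm{N})^2\bigr)+\bm{O}(h)$. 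Adding and using $k'_{n,l}=k'_{n-1,l}+O(h)$, the sum is $-k'_{n-1,l}\bigl(2\bm{M}^{-1}\bm{N}-(\bm{M}^{-1}\bm{N})^2\bigr)+\bm{O}(h)=k'_{n-1,l}\bm{G}+\bm{O}(h)=\bm{O}(h)$ by Lemma~\ref{lemma4}. Thus the whole left-hand side of \eqref{e:2} is $\bm{O}(h)$, i.e.\ it equals $h$ times a bounded matrix; factoring out $h$ and collecting the bounded $O(1)$ remainder as $\mathbb{V}_n^l=\mathbb{A}_n^l\bm{M}^{-1}\bm{N}+\bm{O}(h)$ requires checking that the surviving $O(h)$ coefficient lies in the range of right-multiplication by $\bm{M}^{-1}\bm{N}$ up to $\bm{O}(h)$ — which follows because every building block $\bm{W}^{(n,l)}$, $\bm{Q}^{(n,l)}$, and their products with $\bm{M}^{-1}$ produce matrices whose leading structure is a left factor times $\bm{N}$ or $\bm{M}^{-1}\bm{N}$. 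Once \eqref{e:1} and \eqref{e:2} are written in the asserted form, \eqref{e:001} follows as above by right-multiplying by $\bm{q}$ and invoking $\bm{M}^{-1}\bm{N}\bm{q}=\bm{0}$ (the odd-$m$ case). This completes the proof.
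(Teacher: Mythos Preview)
Your treatment of \eqref{e:001} is correct and matches the paper: $q_0=0$ for $m$ odd, so $\bm{M}^{-1}\bm{N}\bm{q}=q_0\bm{v}=\bm{0}$, and both identities follow immediately.

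For \eqref{e:1} and \eqref{e:2}, your leading-order picture is right and the cancellation via $\bm{G}=\bm{0}$ is exactly the mechanism the paper uses. But your second-order analysis has two genuine gaps. First, your attribution of the $-2k'_{n-1,n-1}\bm{I}_m$ piece is off: the term $-h(\bm{B}^n)^{-1}\bm{W}^{(n,n-2)}=hk'_{n,n-2}\bm{M}^{-1}\bm{N}+\bm{O}(h^2)$ is already a multiple of $\bm{M}^{-1}\bm{N}$ and cannot produce an $\bm{I}_m$ contribution. In the paper the $\bm{I}_m$ piece arises from the refined expansion $\bm{B}^{n-1}=\bm{B}^n-hk'(t_n,t_n)\bm{M}+\bm{O}(h^2)$ (Lemma~\ref{lemma8}), which gives $(\bm{B}^n)^{-1}\bm{Q}^{(n,n-1)}=\bm{I}_m-\bm{M}^{-1}\bm{N}-hk'_{n,n}\bm{I}_m-h\tilde{\mathbb{A}}_n^0\bm{M}^{-1}\bm{N}+\bm{O}(h^2)$; the cross terms $-hk'_{n,n}(\bm{I}_m-\bm{M}^{-1}\bm{N})$ and $-hk'_{n-1,n-1}(\bm{I}_m-\bm{M}^{-1}\bm{N})$ in the product are what yield $-2hk'_{n-1,n-1}\bm{I}_m$ modulo $\bm{M}^{-1}\bm{N}$-terms.

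Second, your claim that the $O(h)$ remainder automatically ``lies in the range of right-multiplication by $\bm{M}^{-1}\bm{N}$'' because ``every building block \ldots\ produces matrices whose leading structure is a left factor times $\bm{N}$ or $\bm{M}^{-1}\bm{N}$'' is not a general principle; it is the substance of the lemma and requires work. The paper establishes it by observing (from Lemma~\ref{lemma1}) that the $O(h)$ correction $\mathbb{A}_2^0$ to $\bm{B}^{(n,n-1)}$ has nonzero entries only at positions $(0,0),(0,1),(1,0)$, and then uses $\bm{M}^{-1}\bm{e}_0=\bm{v}$ and $\bm{M}^{-1}\bm{e}_1=6\bm{e}_0-3\bm{v}$ (Lemma~\ref{lemma3}) to show $\bm{M}^{-1}\mathbb{A}_2^0=\mathbb{A}_0\bm{M}^{-1}\bm{N}$ for a bounded $\mathbb{A}_0$; the same device handles the $O(h)$ corrections to $\bm{W}^{(n,l)}$. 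Without this structural computation, you have only shown that the left sides of \eqref{e:1} and \eqref{e:2} are $\bm{I}_m+\bm{O}(h)$ and $\bm{O}(h)$ respectively, not that the $O(h)$ pieces have the asserted form $\mathbb{A}\bm{M}^{-1}\bm{N}+c\bm{I}_m+\bm{O}(h)$---and that form is precisely what is needed later when you right-multiply by $\bm{q}$.
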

\begin{proof}
	The formula~\eqref{e:001} can be easily checked by noticing that $q_0=0$ and
	then, by Lemma \ref{lemma3}, $\bm{M}^{-1}\bm{N}\bm{q}=q_0\bm{v}=\bm{0}$.
	In the following, we prove \eqref{e:1}~and \eqref{e:2}.
	
	By Lemma~\ref{lemma1}, we can write
	$\bm{B}^{(n,n-1)}=k(t_n,t_n)\bm{N}+h\mathbb{A}_2^0+\bm{O}(h^2)$ with the bounded
	matrix
	\begin{equation}\label{e:A20}
		\mathbb{A}_2^0 := \begin{pmatrix}
			a_{00}&a_{01}&0     &\cdots&0\\
			a_{10}&0     &0     &\cdots&0\\
			0     &0     &0     &\cdots&0 \\
			\vdots&\vdots&\vdots&\ddots&\vdots \\
			0     &0     &0     &\cdots&0
		\end{pmatrix}
	\end{equation}
	for some constants $a_{00}, a_{01}, a_{10} \in \mathbb{R}$.
	By Lemma~\ref{lemma3}, for any
	$\bm{z}=(z_0,z_1,\dots,z_{m-1})^T\in\mathbb{R}^m$,
	\begin{equation}\label{e:M-1A20}
		\begin{aligned}
			\bm{M}^{-1}\mathbb{A}_2^0\bm{z}
			&=\bm{M}^{-1}[(a_{00}z_0+a_{01}z_1)\bm{e}_0+(a_{10}z_0)\bm{e}_1] \\
			&=6a_{10}z_0\bm{e}_0+(-3a_{10}z_0+a_{00}z_0+a_{01}z_1)\bm{v} \\
			&=6a_{10}\bm{N}\bm{z}
			+[(-3a_{10}z_0+a_{00}z_0+a_{01}z_1)/(z_0)]\bm{M}^{-1}\bm{N}\bm{z},
		\end{aligned}
	\end{equation}
	which means that
	\[
	\bm{M}^{-1}\mathbb{A}_2^0 = \mathbb{A}_0\bm{M}^{-1}\bm{N}
	\]
	for the bounded matrix~$\mathbb{A}_0 := 6a_{10}\bm{M}
	+[(-3a_{10}z_0+a_{00}z_0+a_{01}z_1)/(z_0)]\bm{I}_m$.
	
	In addition, if we assume that
	\begin{equation}\label{eq:Bninv}
		(\bm{B}^n)^{-1}=\tfrac{1}{k(t_n,t_n)}\bm{M}^{-1}+h\mathbb{A}_{0}^n+\bm{O}(h^2)
	\end{equation}
	for some bounded matrix~$\mathbb{A}_{0}^n$, then
	\begin{align}
		(\bm{B}^n)^{-1}\bm{B}^{(n,n-1)}&=\bm{M}^{-1}\bm{N}
		+\tfrac{h}{k(t_n,t_n)}\bm{M}^{-1}\mathbb{A}_{2}^0
		+hk(t_n,t_n)\mathbb{A}_{0}^n\bm{N}+\bm{O}(h^2)\label{e:Bn-for-even}\\
		&=\bm{M}^{-1}\bm{N}+h\tilde{\mathbb{A}}_n^0\bm{M}^{-1}\bm{N}
		+\bm{O}(h^2),\label{e:Bn}
	\end{align}
	where $\tilde{\mathbb{A}}_n^0 :=
	[1/k(t_n,t_n)]\mathbb{A}_{0}+k(t_n,t_n)\mathbb{A}_{0}^n\bm{M}$.
	
	By Lemma \ref{lemma8}, we have
	\begin{equation}\label{eq:Bn-1}
		\bm{B}^{n-1}=\bm{B}^n-hk'(t_n,t_n)\bm{M}+\bm{O}(h^2).
	\end{equation}
	Then
	\[
	\bm{Q}^{(n,n-1)}=\bm{B}^{n-1}-\bm{B}^{(n,n-1)}
	=\bm{B}^n-\bm{B}^{(n,n-1)}-hk'(t_n,t_n)\bm{M}+\bm{O}(h^2),
	\]
	and further, by \eqref{eq:Bninv}~and \eqref{e:Bn},
	\begin{equation}\label{e:BQ}
		\begin{aligned}
			(\bm{B}^n)^{-1}\bm{Q}^{(n,n-1)}&=\bm{I}_m-(\bm{B}^n)^{-1}\bm{B}^{(n,n-1)}
			-hk'_{n,n}\bm{I}_m+\bm{O}(h^2)\\
			&=\bm{I}_m-\bm{M}^{-1}\bm{N}-h\tilde{\mathbb{A}}_n^0\bm{M}^{-1}\bm{N}
			-hk'_{n,n}\bm{I}_m+\bm{O}(h^2);
		\end{aligned}
	\end{equation}
	similarly,
	\[
	(\bm{B}^{n-1})^{-1}\bm{Q}^{(n-1,n-2)}=\bm{I}_m -\bm{M}^{-1}\bm{N}
	-h\tilde{\mathbb{A}}_n^1\bm{M}^{-1}\bm{N}-hk'_{n-1,n-1}\bm{I}_m+\bm{O}(h^2)
	\]
	for some bounded matrix~$\tilde{\mathbb{A}}_n^1$. By Lemma~\ref{lemma4}, we know
	that $\bm{G}=-2\bm{M}^{-1}\bm{N}+(\bm{M}^{-1}\bm{N})^2=\bm{0}$, which implies
	that
	\begin{equation}\label{e:MN}
		-\bm{M}^{-1}\bm{N}+(\bm{M}^{-1}\bm{N})^2 = \bm{M}^{-1}\bm{N}.
	\end{equation}
	Thus, by Taylor expansion and Lemma~\ref{lemma4},
	\begin{align*}
		(\bm{B}^n)^{-1}&\bm{Q}^{(n,n-1)}(\bm{B}^{n-1})^{-1}\bm{Q}^{(n-1,n-2)}\\
		&=(\bm{I}_m-\bm{M}^{-1}\bm{N})^2-h(\bm{I}_m
		-\bm{M}^{-1}\bm{N})\tilde{\mathbb{A}}_n^1\bm{M}^{-1}\bm{N}
		-h\tilde{\mathbb{A}}_n^0\bm{M}^{-1}\bm{N}(\bm{I}_m-\bm{M}^{-1}\bm{N})\\
		&\qquad{}-2h(\bm{I}_m-\bm{M}^{-1}\bm{N})k'_{n-1,n-1}\bm{I}_m+\bm{O}(h^2) \\
		&=\bm{I}_m+h\left[-(\bm{I}_m-\bm{M}^{-1}\bm{N})\tilde{\mathbb{A}}_n^1
		+\tilde{\mathbb{A}}_n^0+2k'_{n-1,n-1}\bm{I}_m\right]\bm{M}^{-1}\bm{N}
		-2hk'_{n-1,n-1}\bm{I}_m+\bm{O}(h^2) \\
		&=:\bm{I}_m+h\widehat{\mathbb{A}}_n^0\bm{M}^{-1}\bm{N}
		-2hk'_{n-1,n-1}\bm{I}_m+\bm{O}(h^2).
	\end{align*}
	By Lemma~\ref{lemma8}, we can write
	\[
	\bm{W}^{(n,l)}=\partial_1k(t_n,t_l)\bm{N}+h\mathbb{A}_{21}^{(n,l)}+\bm{O}(h^2)
	\quad\text{and}\quad
	\bm{W}^{(n-1,l)}=\partial_1k(t_n,t_l)\bm{N}
	+h\mathbb{A}_{22}^{(n,l)}+\bm{O}(h^2),
	\]
	where $\mathbb{A}_{21}^{(n,l)}$ and $\mathbb{A}_{22}^{(n,l)}$ have the same
	structure as $\mathbb{A}_2^0$. Then similarly, we can write
	$\bm{M}^{-1}\mathbb{A}_{21}^{(n,l)} = \mathbb{A}^{(n,l)} \bm{M}^{-1}\bm{N}$ for
	some bounded matrix~$\mathbb{A}^{(n,l)}$, so
	\[
	(\bm{B}^n)^{-1}\bm{W}^{(n,l)}=k'_{n,l}\bm{M}^{-1}\bm{N}
	+h\tilde{\mathbb{A}}_{21}^{(n,l)}\bm{M}^{-1}\bm{N}+\bm{O}(h^2),
	\]
	where $\tilde{\mathbb{A}}_{21}^{(n,l)}:=[1/k(t_n,t_n)]\mathbb{A}^{(n,l)}
	+\partial_1k(t_n,t_l)\mathbb{A}_{0}^n\bm{M}$ is bounded. Similarly, there
	exists a bounded matrix $\tilde{\mathbb{A}}_{22}^{(n,l)}$ such that
	\begin{equation}\label{e:BW}
		(\bm{B}^{n-1})^{-1}\bm{W}^{(n-1,l)}=k'_{n,l}\bm{M}^{-1}\bm{N}
		+h\tilde{\mathbb{A}}_{22}^{(n,l)}\bm{M}^{-1}\bm{N}+\bm{O}(h^2).
	\end{equation}
	Therefore,
	\begin{align*}
		(\bm{B}^n)^{-1}\bm{Q}^{(n,n-1)}&(\bm{B}^{n-1})^{-1}\bm{Q}^{(n-1,n-2)}
		-h(\bm{B}^n)^{-1}\bm{W}^{(n,n-2)} \\
		&=\bm{I}_m+h\left[\widehat{\mathbb{A}}_n^0-k'_{n,n-2}\bm{I}_m
		-\tilde{\mathbb{A}}_{21}^{(n,n-2)}\right]\bm{M}^{-1}\bm{N}
		-2hk'_{n-1,n-1}\bm{I}_m+\bm{O}(h^2)\\
		&=:\bm{I}_m+h\mathbb{A}_n^0\bm{M}^{-1}\bm{N}
		-2hk'_{n-1,n-1}\bm{I}_m+\bm{O}(h^2),
	\end{align*}
	and \eqref{e:1} follows.
	
	Now we prove \eqref{e:2}.
	By \eqref{e:BQ}, \eqref{e:MN}, \eqref{e:BW} and Lemma~\ref{lemma4},
	\begin{align*}
		&(\bm{B}^n)^{-1}\bm{Q}^{(n,n-1)}(\bm{B}^{n-1})^{-1}\bm{W}^{(n-1,l)}\\
		&=\bigl[\bm{I}_m-\bm{M}^{-1}\bm{N}-h\tilde{\mathbb{A}}_n^0\bm{M}^{-1}\bm{N}
		-2hk'_{n-1,n-1}\bm{I}_m+\bm{O}(h^2)\bigr]
		\bigl[k'_{n,l}\bm{M}^{-1}\bm{N}
		+h\tilde{\mathbb{A}}_{22}^{(n,l)}\bm{M}^{-1}\bm{N}+\bm{O}(h^2)\bigr]\\
		&=-k'_{n,l}\bm{M}^{-1}\bm{N}+h\tilde{\mathbb{A}}_n^l\bm{M}^{-1}\bm{N}
		+\bm{O}(h^2),
	\end{align*}
	with the obvious meanings of the bounded matrices $\tilde{\mathbb{A}}_n^l$.
	Therefore,
	\[
	(\bm{B}^n)^{-1}\bm{W}^{(n,l)}
	+(\bm{B}^n)^{-1}\bm{Q}^{(n,n-1)}(\bm{B}^{n-1})^{-1}\bm{W}^{(n-1,l)}
	=h\mathbb{A}_n^l\bm{M}^{-1}\bm{N}+\bm{O}(h^2),
	\]
	with $\mathbb{A}_n^l:=\tilde{\mathbb{A}}_{21}^{(n,l)}+\tilde{\mathbb{A}}_n^l$.
	The proof is complete.
\end{proof}

In the next lemma, we use $\tilde{\bm{Q}}^n$ and $\tilde{\bm{Q}}^{(n,l)}$
defined in~\eqref{e:Q}.

\begin{lemma}\label{lemmaLeven}
	For some matrices~$\mathbb{\bar{A}}_n:=(\bar{a}_{i,j}^n)$ which are bounded
	as~$h\to0$, and for $\mathbb{A}_2^0$ defined in~\eqref{e:A20}, let
	\[
	\mathbb{W}_n:=-k_{n,n}'\bm{I}_m+\bm{M}^{-1}\bm{N}\mathbb{\bar{A}}_n
	-k_{n,n}'\bm{M}^{-1}\bm{N}
	-\tfrac{1}{k(t_n,t_n)}\bm{M}^{-1}\mathbb{A}_2^0+\bm{O}(h).
	\]
	If $m$ is even, then
	\begin{equation}\label{Wnz}
		\mathbb{W}_n\bm{z}=-k_{n,n}'\bm{z}+\bar{a}_n^0\bm{v}+\bar{a}_n^1z_0\bm{e}_0
		+\bm{O}(h)\quad
		\text{for all $\bm{z}=(z_0,z_1,\dots,z_{m-1})^T \in\mathbb{R}^m$,}
	\end{equation}
	with scalars
	$\bar{a}_n^0:=\sum_{j=0}^{m-1}\bar{a}_{0,j}^nz_j-k_{n,n}'z_0+(3a_{10}z_0
	-a_{00}z_0-a_{01}z_1)/k(t_n,t_n)$~and
	$\bar{a}_n^1:=-6a_{10}/k(t_n,t_n)$ that are bounded as~$h\to0$.  Moreover,
	\begin{align}
		(\bm{B}^n)^{-1}\tilde{\bm{Q}}^n&=\bm{I}_m+h\mathbb{W}_n, \label{e:mathbbW}\\
		(\bm{B}^n)^{-1}\tilde{\bm{Q}}^{(n,l)}
		&=-h^2\tfrac{\partial_{1}^2k(t_{n-1/2},t_l)}{k(t_n,t_n)}\bm{N}
		+\bm{O}(h^3)\quad\text{for $0\leq l \leq n \leq N-1$.} \label{e:BQL}
	\end{align}
\end{lemma}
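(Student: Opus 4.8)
The plan is to prove \eqref{Wnz}, \eqref{e:mathbbW} and \eqref{e:BQL} in turn, mirroring the structure of the odd-case Lemma~\ref{lemmaLodd} but now exploiting the even-case identities of Lemma~\ref{lemma5}, namely $\bm{N}\bm{M}^{-1}\bm{N}=\bm{0}$ and $\bm{N}\bm{M}^{-1}\bm{D}_{pq}^{(n,l)}=6[\partial_1K_{pq}(t_n,t_l)]\bm{N}$. First I would dispatch \eqref{Wnz}: apply $\mathbb{W}_n$ to an arbitrary $\bm{z}=(z_0,\dots,z_{m-1})^T$ and use Lemma~\ref{lemma3} termwise. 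The term $-k_{n,n}'\bm{I}_m\bm{z}$ contributes $-k_{n,n}'\bm{z}$; the term $\bm{M}^{-1}\bm{N}\bar{\mathbb{A}}_n\bm{z}$ equals $(\bar{\mathbb{A}}_n\bm{z})_0\,\bm{v}=\big(\sum_j\bar a_{0,j}^n z_j\big)\bm{v}$ by $\bm{M}^{-1}\bm{N}\bm{y}=y_0\bm{v}$; the term $-k_{n,n}'\bm{M}^{-1}\bm{N}\bm{z}=-k_{n,n}'z_0\bm{v}$; and the term $-\tfrac1{k(t_n,t_n)}\bm{M}^{-1}\mathbb{A}_2^0\bm{z}$ is handled exactly as in \eqref{e:M-1A20}, giving $-\tfrac1{k(t_n,t_n)}\big[6a_{10}z_0\bm{e}_0+(-3a_{10}z_0+a_{00}z_0+a_{01}z_1)\bm{v}\big]$. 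Collecting the $\bm{v}$-coefficients into $\bar a_n^0$ and the $\bm{e}_0$-coefficient into $\bar a_n^1 z_0$ with $\bar a_n^1=-6a_{10}/k(t_n,t_n)$ yields \eqref{Wnz}; boundedness of $\bar a_n^0,\bar a_n^1$ is immediate from $|k(t,t)|\ge k_0>0$ and the boundedness of $\bar{\mathbb{A}}_n$, $\mathbb{A}_2^0$.

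Next I would prove \eqref{e:mathbbW}. Starting from the definitions in \eqref{e:Q}, write $\tilde{\bm{Q}}^n=\bm{F}\bm{B}^{n-1}-\bm{B}^{(n,n-1)}$ with $\bm{F}=\bm{I}_m+\bm{N}\bm{M}^{-1}$. Expand $\bm{B}^{n-1}=\bm{B}^n-hk'(t_n,t_n)\bm{M}+\bm{O}(h^2)$ from \eqref{eq:Bn-1} and $\bm{B}^{(n,n-1)}=k(t_n,t_n)\bm{N}+h\mathbb{A}_2^0+\bm{O}(h^2)$ from Lemma~\ref{lemma1}/\eqref{e:A20}, and $\bm{B}^{n-1}=k(t_n,t_n)\bm{M}+\bm{O}(h)$ so that $\bm{N}\bm{M}^{-1}\bm{B}^{n-1}=k(t_n,t_n)\bm{N}\bm{M}^{-1}\bm{M}+\dots$; more carefully, $\bm{N}\bm{M}^{-1}\bm{B}^{n-1}=\bm{N}\bm{M}^{-1}[\bm{B}^n-hk'(t_n,t_n)\bm{M}]+\bm{O}(h^2)=\bm{N}+h[\text{terms}]+\bm{O}(h^2)$, using $\bm{N}\bm{M}^{-1}\bm{B}^n=\bm{N}+\bm{O}(h)$ and then refining via Lemma~\ref{lemma2}'s $\bm{O}(h)$-expansion of $\bm{B}^n$ together with $\bm{N}\bm{M}^{-1}\bm{N}=\bm0$. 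Combining, $\tilde{\bm{Q}}^n=\bm{B}^n+h[\,k'(t_n,t_n)(\text{stuff})+\bm{N}\bm{M}^{-1}\bar{\mathbb{A}}_n\cdot k(t_n,t_n)-\dots-\mathbb{A}_2^0\,]+\bm{O}(h^2)$; left-multiplying by $(\bm{B}^n)^{-1}=\tfrac1{k(t_n,t_n)}\bm{M}^{-1}+\bm{O}(h)$ and matching against the stated $\mathbb{W}_n$ gives \eqref{e:mathbbW}, the matrix $\bar{\mathbb{A}}_n$ being defined precisely to absorb the $\bm{O}(1)$ correction to $\bm{B}^{n-1}$ beyond its leading $k(t_n,t_n)\bm{M}$ term. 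The bookkeeping of which bounded matrix plays the role of $\bar{\mathbb{A}}_n$ is the fiddly point, but it is forced by the expansions.

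For \eqref{e:BQL}, write $\tilde{\bm{Q}}^{(n,l)}=\bm{F}\bm{B}^{(n-1,l)}-\bm{B}^{(n,l)}=\bm{B}^{(n-1,l)}-\bm{B}^{(n,l)}+\bm{N}\bm{M}^{-1}\bm{B}^{(n-1,l)}$. By Lemma~\ref{lemma2} with $d\ge2$, $\bm{B}^{(n-1,l)}=k(t_{n-1},t_l)\bm{N}+\tfrac{h}{6}\bm{D}^{(n-1,l)}+\tfrac{h^2}{12}\tilde{\bm{D}}^{(n-1,l)}+\bm{O}(h^3)$, so that $\bm{N}\bm{M}^{-1}\bm{B}^{(n-1,l)}=k(t_{n-1},t_l)\bm{N}\bm{M}^{-1}\bm{N}+\tfrac{h}{6}\bm{N}\bm{M}^{-1}\bm{D}^{(n-1,l)}+\bm{O}(h^2)=h[\partial_1 k(t_{n-1},t_l)]\bm{N}+\bm{O}(h^2)$ by Lemma~\ref{lemma5}; one then pushes this expansion one order further (keeping the $h^2$-term, which again contributes only via $\bm{N}\bm{M}^{-1}(\cdot)$ and collapses to a multiple of $\bm{N}$). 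Meanwhile $\bm{B}^{(n-1,l)}-\bm{B}^{(n,l)}=-h\bm{W}^{(n,l)}=-h\,\partial_1 k(t_n,t_l)\bm{N}-h^2\mathbb{A}_{21}^{(n,l)}+\bm{O}(h^3)$ from Lemma~\ref{lemma8}. Adding the two pieces, the $\bm{O}(h)$ terms $-h\,\partial_1 k(t_n,t_l)\bm{N}$ and $h\,\partial_1 k(t_{n-1},t_l)\bm{N}$ differ by $\bm{O}(h^2)$ (Taylor in the first argument at $t_{n-1/2}$), and a careful second-order expansion shows the surviving $h^2$-coefficient of $\bm{N}$ is exactly $-\partial_1^2 k(t_{n-1/2},t_l)$; all other $h^2$-contributions either vanish (those hit by $\bm{N}\bm{M}^{-1}\bm{N}=\bm0$) or are of the form $h^2(\text{bounded})\bm{N}$, which is the point of \eqref{e:BQL}. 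Finally left-multiply by $(\bm{B}^n)^{-1}=\tfrac1{k(t_n,t_n)}\bm{M}^{-1}+\bm{O}(h)$; but here one must be slightly more careful, since $\bm{M}^{-1}\bm{N}=(\bm{v},\bm0,\dots,\bm0)$, so $(\bm{B}^n)^{-1}[h^2 c(t_{n-1/2},t_l)\bm{N}]$ naively gives $h^2\tfrac{c}{k(t_n,t_n)}\bm{M}^{-1}\bm{N}$, a multiple of $\bm{v}$, not of $\bm{N}$ — so the claimed form $-h^2\tfrac{\partial_1^2 k(t_{n-1/2},t_l)}{k(t_n,t_n)}\bm{N}+\bm{O}(h^3)$ must come from a more delicate cancellation, and identifying that cancellation (using the precise structure of $\tilde{\bm{Q}}^{(n,l)}$ before inversion, e.g. that $\bm{M}(\text{something})=h^2(\dots)\bm{N}+\dots$) is the main obstacle. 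I would resolve it by not inverting $\bm{B}^n$ last, but rather first showing $(\bm{B}^n)^{-1}\bm{N}$ has a computable form via Lemma~\ref{lemma3} ($\bm{M}^{-1}\bm{N}\bm{z}=z_0\bm{v}$) and tracking the $z_0$-component through the whole computation, so that the troublesome $\bm{v}$-directions are seen to cancel against terms coming from $\bm{N}\bm{M}^{-1}\bm{B}^{(n-1,l)}$ at order $h^2$; this is exactly parallel to how \eqref{Wnz} isolates $\bm{v}$- and $\bm{e}_0$-directions, and the even-case identity $\bm{N}\bm{M}^{-1}\bm{N}=\bm0$ is what ultimately kills the spurious contributions.
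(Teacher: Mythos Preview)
Your treatment of \eqref{Wnz} and \eqref{e:mathbbW} is essentially correct and matches the paper's argument (the paper left-multiplies by $(\bm{B}^n)^{-1}$ piece by piece rather than expanding $\tilde{\bm{Q}}^n$ first, but the content is the same).

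For \eqref{e:BQL}, however, there is a genuine gap. You correctly flag the obstacle---that $(\bm{B}^n)^{-1}$ applied to a multiple of $\bm{N}$ yields a multiple of $\bm{v}$, not of $\bm{N}$---but your diagnosis of $\tilde{\bm{Q}}^{(n,l)}$ as ``$h^2(\text{bounded})\bm{N}+\bm{O}(h^3)$'' is wrong, and so is your proposed fix of ``tracking the $z_0$-component''. In fact $\tilde{\bm{Q}}^{(n,l)}=h^2\bm{H}_{n,l}+\bm{O}(h^3)$ where $\bm{H}_{n,l}$ has \emph{two} nonzero entries in its first column: a $(0,0)$ entry $-\tfrac12\partial_1^2k(\vartheta_n,t_l)$ and, crucially, a $(1,0)$ entry $-\tfrac16\partial_1^2k(t_{n-1/2},t_l)$. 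The latter comes from the $(1,0)$ entry of the second-order matrix $\bm{D}_1^{(n,l)}$ in the expansion of $\bm{B}^{(n,l)}-\bm{B}^{(n-1,l)}$ (this entry is not killed by $\bm{N}\bm{M}^{-1}\bm{N}=\bm{0}$, since that identity only controls the $\bm{N}\bm{M}^{-1}\bm{B}^{(n-1,l)}$ piece). You have effectively discarded this $(1,0)$ entry, and without it the argument cannot close.

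The actual cancellation works like this: writing the first column of $\bm{H}_{n,l}$ as $\chi=-\tfrac12\partial_1^2k(\vartheta_n,t_l)\bm{e}_0-\tfrac16\partial_1^2k(t_{n-1/2},t_l)\bm{e}_1$ and using Lemma~\ref{lemma3} in the form $\bm{M}^{-1}\bm{e}_0=\bm{v}$ together with $\bm{M}^{-1}\bm{e}_1=6\bm{e}_0-3\bm{v}$, one gets
\[
\bm{M}^{-1}\chi=-\partial_1^2k(t_{n-1/2},t_l)\,\bm{e}_0
+\tfrac12\bigl[\partial_1^2k(t_{n-1/2},t_l)-\partial_1^2k(\vartheta_n,t_l)\bigr]\bm{v}
=-\partial_1^2k(t_{n-1/2},t_l)\,\bm{e}_0+\bm{O}(h),
\]
since $|\vartheta_n-t_{n-1/2}|=O(h)$. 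The $\bm{e}_0$-column then yields exactly the $\bm{N}$-matrix claimed in \eqref{e:BQL}. So the mechanism is not a cancellation of $\bm{v}$-directions against ``terms coming from $\bm{N}\bm{M}^{-1}\bm{B}^{(n-1,l)}$ at order $h^2$''; it is the identity $\bm{M}^{-1}\bm{e}_1=6\bm{e}_0-3\bm{v}$ acting on the $(1,0)$ entry, which simultaneously creates the desired $\bm{e}_0$-direction and supplies the $\bm{v}$-term that cancels the one produced by the $(0,0)$ entry.
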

\begin{proof}
	The equation~\eqref{Wnz} can be obtained directly from Lemma~\ref{lemma3}~and
	\eqref{e:M-1A20}. Turning to~\eqref{e:mathbbW}, by \eqref{eq:Bninv}~and
	\eqref{eq:Bn-1}, we have
	\begin{align*}
		(\bm{B}^n)^{-1}\bm{B}^{n-1}
		&=(\bm{B}^n)^{-1}\left[\bm{B}^n-hk'(t_n,t_n)\bm{M}+\bm{O}(h^2)\right] \\
		&=\bm{I}_m+(\bm{B}^n)^{-1}\left[-hk'(t_n,t_n)\bm{M}+\bm{O}(h^2)\right]\\
		&=\bm{I}_m+\left[\tfrac{1}{k(t_n,t_n)}\bm{M}^{-1}+h\mathbb{A}_{0}^n
		+\bm{O}(h^2)\right]
		\left[-hk'(t_n,t_n)\bm{M}+\bm{O}(h^2)\right] \\
		&=\bm{I}_m-hk'_{n,n}\bm{I}_m+\bm{O}(h^2)
	\end{align*}
	and
	\[
	(\bm{B}^n)^{-1}\bm{N}=\left[\tfrac{1}{k(t_n,t_n)}\bm{M}^{-1}+h\mathbb{A}_{0}^n
	+\bm{O}(h^2)\right]\bm{N}
	=\tfrac{1}{k(t_n,t_n)}\bm{M}^{-1}\bm{N}+h\mathbb{A}_{0}^n\bm{N}+\bm{O}(h^2).
	\]
	By Lemma \ref{lemma1}, we can assume that
	$\bm{B}^n=k(t_n,t_n)\bm{M}+h\mathbb{\bar{A}}_n^3+\bm{O}(h^2)$ for some bounded
	matrix~$\mathbb{\bar{A}}_n^3$. Then, 
	\begin{align*}
		(\bm{B}^n)^{-1}\bm{N}\bm{M}^{-1}\bm{B}^n
		&=(\bm{B}^n)^{-1}\bm{N}\bm{M}^{-1}\left[
		k(t_n,t_n)\bm{M}+h\mathbb{\bar{A}}_n^3+\bm{O}(h^2)\right] \\
		&=k(t_n,t_n)(\bm{B}^n)^{-1}\bm{N}
		+h(\bm{B}^n)^{-1}\bm{N}\bm{M}^{-1}\mathbb{\bar{A}}_n^3+\bm{O}(h^2) \\
		\text{and again by \eqref{eq:Bninv},} \\
		&=\bm{M}^{-1}\bm{N}+hk(t_n,t_n)\mathbb{A}_{0}^n\bm{N}+h\left[
		\tfrac{1}{k(t_n,t_n)}\bm{M}^{-1}+h\mathbb{A}_{0}^n+\bm{O}(h^2)
		\right]\bm{N}\bm{M}^{-1}\mathbb{\bar{A}}_n^3+\bm{O}(h^2) \\
		&=\bm{M}^{-1}\bm{N}+hk(t_n,t_n)\mathbb{A}_{0}^n\bm{N}
		+\tfrac{h}{k(t_n,t_n)}\bm{M }^{-1}\bm{N}\bm{M}^{-1}\mathbb{\bar{A}}_n^3
		+\bm{O}(h^2)\\
		&=:\bm{M}^{-1}\bm{N}+hk(t_n,t_n)\mathbb{A}_{0}^n\bm{N}
		+h\bm{M}^{-1}\bm{N}\mathbb{\bar{A}}_n+\bm{O}(h^2).
	\end{align*}
	Again by \eqref{eq:Bn-1}, we have
	\begin{align*}
		(\bm{B}^n)^{-1}\bm{N}\bm{M}^{-1}\bm{B}^{n-1}
		&=(\bm{B}^n)^{-1}\bm{N}\bm{M}^{-1}\left[
		\bm{B}^n-hk'(t_n,t_n)\bm{M}+\bm{O}(h^2)\right] \\
		&=(\bm{B}^n)^{-1}\bm{N}\bm{M}^{-1}\bm{B}^n-hk'(t_n,t_n)(\bm{B}^n)^{-1}\bm{N}
		+\bm{O}(h^2) \\
		&=\bm{M}^{-1}\bm{N}+hk(t_n,t_n)\mathbb{A}_{0}^n\bm{N}
		+h\bm{M}^{-1}\bm{N}\mathbb{\bar{A}}_n-hk_{n,n}'\bm{M}^{-1}\bm{N}+\bm{O}(h^2),
	\end{align*}
	which, together with~\eqref{e:Bn-for-even}, implies
	\begin{align*}
		(\bm{B}^n)^{-1}\tilde{\bm{Q}}^n
		&=(\bm{B}^n)^{-1}(\bm{F}\bm{B}^{n-1}-\bm{B}^{(n,n-1)})
		=(\bm{B}^n)^{-1}\left[
		(\bm{I}_m+\bm{N}\bm{M}^{-1})\bm{B}^{n-1}-\bm{B}^{(n,n-1)}\right] \\
		&=(\bm{B}^n)^{-1}(\bm{I}_m+\bm{N}\bm{M}^{-1})\bm{B}^{n-1}
		-(\bm{B}^n)^{-1}\bm{B}^{(n,n-1)} \\
		&=(\bm{B}^n)^{-1}\bm{B}^{n-1}+(\bm{B}^n)^{-1}\bm{N}\bm{M}^{-1}\bm{B}^{n-1}
		-(\bm{B}^n)^{-1}\bm{B}^{(n,n-1)} \\
		&=\bm{I}_m-hk'_{n,n}\bm{I}_m+h\bm{M}^{-1}\bm{N}\mathbb{\bar{A}}_n
		-hk_{n,n}'\bm{M}^{-1}\bm{N}-\tfrac{h}{k(t_n,t_n)}\bm{M}^{-1}\mathbb{A}_2^0
		+\bm{O}(h^2).
	\end{align*}
	
	Now we focus on~\eqref{e:BQL}. By Lemma~\ref{lemma8} and using Taylor
	expansion at the point~$(t_{n-1/2},t_l)$,
	\[
	\bm{B}^{(n,l)}-\bm{B}^{(n-1,l)}=h\partial_{1}k(t_{n-1/2},t_l)\bm{N}
	+\frac{h^2}{6}\bm{D}_1^{(n,l)}+\bm{O}(h^3),
	\]
	with
	\[
	\bm{D}_1^{(n,l)}:=\begin{pmatrix}
		3\partial_1^2k(t_{n-1/2},t_l)+3\partial_{1}\partial_2k(t_{n-1/2},t_l) &
		\partial_{1}\partial_{2}k(t_{n-1/2},t_l) & \cdots &0\\
		\partial_1^2k(t_{n-1/2},t_l) & 0 &\cdots&0 \\
		\vdots&\vdots&\ddots&\vdots\\
		0&0&\cdots&0 \end{pmatrix};
	\]
	In addition, by Lemmas \ref{lemma2}~and \ref{lemma5},
	\begin{align*}
		\bm{N}\bm{M}^{-1}\bm{B}^{(n-1,l)}&=\bm{N}\bm{M}^{-1}\left[k(t_{n-1},t_l)\bm{N}
		+\frac{h}{6}\bm{D}^{(n-1,l)}+\frac{h^2}{12}\tilde{D}^{(n-1,l)}
		+\bm{O}(h^3)\right]\\
		&=h[\partial_{1}k(t_{n-1},t_l)]\bm{N}
		+\frac{h^2}{12}\bm{N}\bm{M}^{-1}\tilde{\bm{D}}^{(n-1,l)}+\bm{O}(h^3),
	\end{align*}
	and by Lemma \ref{lemma3},
	\[
	\bm{N}\bm{M}^{-1}\tilde{\bm{D}}^{(n-1,l)}=\begin{pmatrix}
		6d_{1,0}^{(n-1,l)}&6d_{1,1}^{(n-1,l)}&0     &\cdots&0 \\
		0                 &0                 &0     &\cdots&0 \\
		\vdots            &\vdots            &\vdots&\ddots&\vdots\\
		0                 &0                 &0     &\cdots&0\end{pmatrix};
	\]
	Therefore,
	\begin{align*}
		\tilde{\bm{Q}}^{(n,l)}&=\bm{B}^{(n-1,l)}-\bm{B}^{(n,l)}
		+\bm{N}\bm{M}^{-1}\bm{B}^{(n-1,l)}\\
		&=-\frac{h^2}{2}[\partial_{1}^2k(\vartheta_n,t_l)]\bm{N}
		-\frac{h^2}{6}\bm{D}_1^{(n,l)}
		+\frac{h^2}{12}\bm{N}\bm{M}^{-1}\tilde{\bm{D}}^{(n-1,l)}+\bm{O}(h^3)
		=h^2\bm{H}_{n,l}+\bm{O}(h^3)
	\end{align*}
	for some~$\vartheta_n \in (t_{n-1/2}, t_{n-1})$ and for
	\[
	\bm{H}_{n,l}:= \begin{pmatrix}
		-\frac{1}{2}\partial_{1}^2k(\vartheta_n,t_l)&0     &0     &\cdots&0 \\
		-\frac{1}{6}\partial_{1}^2k(t_{n-1/2},t_l)  &0     &0     &\cdots&0 \\
		0                                           &0     &0     &\cdots&0\\
		\vdots                                      &\vdots&\vdots&\ddots&\vdots\\
		0                                           &0     &0     &\cdots& 0
	\end{pmatrix}=\left(\chi, \bm{0}, \ldots, \bm{0}\right),
	\]
	where $\chi:=\bigl[-\frac{1}{2}\partial_{1}^2k(\vartheta_n,t_l)\bm{e}_0
	-\frac{1}{6}\partial_{1}^2k(t_{n-1/2},t_l)\bm{e}_1\bigr]$.
	Then by~\eqref{eq:Bninv},
	\[
	(\bm{B}^n)^{-1}\tilde{\bm{Q}}^{(n,l)}
	=\frac{h^2}{k(t_n,t_n)}\,\bm{M}^{-1}\bm{H}_{n,l}+\bm{O}(h^3)
	=\frac{h^2}{k(t_n,t_n)}\bm{M}^{-1}
	\left(\chi, \bm{0}, \dots, \bm{0} \right)+\bm{O}(h^3).
	\]
	In addition, by Lemma~\ref{lemma3},
	\[
	\bm{M}^{-1}\chi=-\partial_{1}^2k(t_{n-1/2},t_l)\bm{e}_0+\bm{O}(h),
	\]
	and we obtain
	\[
	(\bm{B}^n)^{-1}\tilde{\bm{Q}}^{(n,l)}
	=-h^2\tfrac{\partial_{1}^2k(t_{n-1/2},t_l)}{k(t_n,t_n)}\bm{N}+\bm{O}(h^3).
	\]
\end{proof}

In the next theorem, we state a local superconvergence result for the perturbed
DG solution of a first-kind VIE.  The special superconvergence points depend on
the zeros of a shifted Legendre polynomial.

\begin{theorem}\label{sup_perturbed}
	Let $y$ be the exact solution of the first-kind VIE~\eqref{V1}, and
	$y_h\in S_{m-1}^{(-1)}(I_h)$ be the corresponding perturbed DG solution defined
	by~\eqref{yh} with $\delta=O(h^{m_1})$.
	\begin{enumerate}
		\item Suppose that $m$ is odd, $g\in C^{m+4}(I)$, $k\in C^{m+4}(D)$, and
		the exact solution $y$ satisfies $y^{(m)}(0)=0$. Then, for all uniform
		meshes~$I_h$ with sufficiently small~$h$,
		\[
		|y(t_n+s_rh)-y_h(t_n+s_rh)|\leq Ch^{\min\{m+1, m_1-2\}},
		\]
		where the points $\{s_r\}_{r=0}^{m-1}$ satisfy $P_{m+1}'(s_r)=0$.
		\item Suppose that $m$ is even, $g\in C^{m+3}(I)$, and $k\in C^{m+3}(D)$.
		Then,
		\[
		|y(t_n+s_rh)-y_h(t_n+s_rh)| \leq Ch^{\min\{m, m_1-2\}},
		\]
		where the points $\{s_r\}_{r=1}^{m-1}$ satisfy $P_{m}'(s_r)=0$.
	\end{enumerate}
\end{theorem}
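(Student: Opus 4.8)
The pointwise estimate at $s_r$ will be reduced to a weighted estimate on the Legendre error coefficients $\bm E_n:=(E_0^n,\dots,E_{m-1}^n)^{T}$, and then obtained by running the difference–equation/Gronwall scheme of the proof of Theorem~\ref{perturbed}, but with the local expansion~\eqref{general-local-err} and the algebraic error equation~\eqref{general_algebraic_err} (via Lemma~\ref{lemma7}) taken at order $p=m+2$ in the odd case and $p=m+1$ in the even case, so that all remainders are pushed one order lower; the smoothness hypotheses on $g$ and $k$ are exactly what Lemma~\ref{lemma7} then requires. The starting point is a shifted‑Legendre identity: with $\bm c_r:=(P_0(s_r),\dots,P_{m-1}(s_r))^{T}$ and the relation $P_{k+1}'-P_{k-1}'=2(2k+1)P_k$, a zero $s_r$ of $P_{m+1}'$ ($m$ odd) satisfies $\bm c_r^{T}\bm q=-(2m+1)P_m(s_r)$ for $\bm q=\bm w/w_0-\bm v/2$ as in Lemma~\ref{lemmaLodd}, while a zero $s_r$ of $P_m'$ ($m$ even) satisfies $\bm c_r^{T}\bm v=0$. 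Since Lemma~\ref{lemma6} gives $e(t_n+s_rh)=\bm c_r^{T}\bm E_n+\gamma_m h^m y^{(m)}(t_{n+1/2})P_m(s_r)+O(h^{m+1})$, in the even case it suffices to show $\bm c_r^{T}\bm E_n=O(h^{\min\{m,m_1-2\}})$, and in the odd case $\bm c_r^{T}\bm E_n=-\gamma_m h^m y^{(m)}(t_{n+1/2})P_m(s_r)+O(h^{\min\{m+1,m_1-2\}})$.

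\noindent\emph{Even case.} Here I would use the combined recursion from Case~II of the proof of Theorem~\ref{perturbed}, namely $\bm E_n=(\bm B^n)^{-1}\tilde{\bm Q}^n\bm E_{n-1}+\sum_{l\le n-2}(\bm B^n)^{-1}\tilde{\bm Q}^{(n,l)}\bm E_l+\bm O(h^{\min\{m+1,m_1-1\}})$. By Lemma~\ref{lemmaLeven} and $v_0=0$, $(\bm B^n)^{-1}\tilde{\bm Q}^n=\bm I_m+h\mathbb W_n$ has $\bm v$ as a near‑eigenvector, and by~\eqref{e:BQL} together with $\bm N\bm v=v_0\bm e_0=\bm0$ the kernels $(\bm B^n)^{-1}\tilde{\bm Q}^{(n,l)}=\bm O(h^2)$ annihilate $\bm v$ up to $\bm O(h^3)$; moreover the leading part of the source is proportional to $h^m\bm v$, via $(\bm B^n)^{-1}\bm e_0=k(t_n,t_n)^{-1}\bm v+\bm O(h)$. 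Splitting $\bm E_n=\psi_n\bm v+\bm\omega_n$ by a fixed projection off $\bm v$, the complement $\bm\omega_n$ then satisfies a well‑conditioned Volterra recursion with source only $\bm O(h^{m+1})+\bm O(h^{m_1-1})$ and initial value $\bm\omega_0=\bm O(h^{m})+\bm O(h^{m_1-1})$, since the $n=0$ equation gives $\bm E_0=\alpha_m\gamma_m h^m y^{(m)}(t_{1/2})\bm w+\bm O(h^{m+1})+\bm O(h^{m_1-1})$. The discrete Gronwall inequality yields $\bm\omega_n=\bm O(h^{\min\{m,m_1-2\}})$, and since $\bm c_r^{T}\bm v=0$ we get $\bm c_r^{T}\bm E_n=\bm c_r^{T}\bm\omega_n$, i.e.\ the claim; no condition on $y^{(m)}(0)$ is needed because $\bm E_0$ is already $\bm O(h^m)$.

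\noindent\emph{Odd case.} Now I would use the twice‑iterated recursion built from \eqref{e:1} and~\eqref{e:2} of Lemma~\ref{lemmaLodd}, $\bm E_n=[\bm I_m+h\mathbb V_n^0]\bm E_{n-2}+h^2\sum_{l\le n-3}\mathbb V_n^l\bm E_l+S_n$, in which $\bm q$ is a near‑eigenvector of $\mathbb V_n^0$ and $\mathbb V_n^l\bm q=\bm O(h)$ by~\eqref{e:001}. Using Lemmas~\ref{lemma1} and \ref{lemma8} and $(\bm B^n)^{-1}\bm e_{m-1}=k(t_n,t_n)^{-1}\bm w+\bm O(h)$, the once‑differenced source is $h^{m+1}\tau_n\bm w+\bm O(h^{m+2})+\bm O(h^{m_1-1})$ with the smooth scalar $\tau_n=\gamma_m\alpha_m\,k(t_n,t_n)^{-1}\tfrac{d}{dt}\!\big[k(t,t)y^{(m)}(t)\big]\big|_{t_n}$; hence, combining two consecutive such sources, $S_n=h^{m+1}\tau_n(\bm w+\bm H\bm w)+\bm O(h^{m+2})+\bm O(h^{m_1-1})$, and the identity $\bm w+\bm H\bm w=\bm w+(\bm I_m-\bm M^{-1}\bm N)\bm w=2\bm w-w_0\bm v=2w_0\bm q$ (Lemma~\ref{lemma3}) collapses this to $S_n=2w_0 h^{m+1}\tau_n\bm q+\bm O(h^{m+2})+\bm O(h^{m_1-1})$. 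Splitting $\bm E_n=\psi_n\bm q+\bm\omega_n$, the complement $\bm\omega_n$ satisfies a Volterra recursion with source $\bm O(h^{m+2})+\bm O(h^{m_1-1})$ (the $h^{m+1}\bm q$‑part being killed by the projection), so $\bm\omega_n=\bm O(h^{\min\{m+1,m_1-2\}})$ provided $\bm\omega_0,\bm\omega_1=\bm O(h^{m+1})$, which follows from $\bm E_0=\alpha_m\gamma_m h^m y^{(m)}(t_{1/2})\bm w+\cdots$ together with $y^{(m)}(0)=0$ — the sole use of that hypothesis. The scalar mode obeys $\psi_n=(1-2hk_{n-1,n-1}')\psi_{n-2}+2w_0 h^{m+1}\tau_n+O(h^{m+2})+O(h^{m_1-1})$; comparing with $\widehat\psi_n:=\tfrac{\gamma_m}{2m+1}h^m y^{(m)}(t_{n+1/2})$, which solves the same recursion up to $O(h^{m+2})$ per step because $k'(t,t)/k(t,t)=\tfrac{d}{dt}\log k(t,t)$ and $w_0\alpha_m=1/(2m+1)$, and using $\psi_0,\psi_1=\widehat\psi_0,\widehat\psi_1+O(h^{m+1})$ (again via $y^{(m)}(0)=0$), one gets $\psi_n=\widehat\psi_n+O(h^{\min\{m+1,m_1-2\}})$. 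Therefore $\bm c_r^{T}\bm E_n=\psi_n\bm c_r^{T}\bm q+\bm c_r^{T}\bm\omega_n=-(2m+1)P_m(s_r)\widehat\psi_n+O(h^{\min\{m+1,m_1-2\}})=-\gamma_m h^m y^{(m)}(t_{n+1/2})P_m(s_r)+O(h^{\min\{m+1,m_1-2\}})$, which yields the stated estimate.

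\noindent The hard part is the odd case bookkeeping: confirming that the difference source is $h^{m+1}\tau_n\bm w$ to leading order (resting on the fine structure of $\bm\beta_m^n$, $\bm\beta_m^{(n,l)}$ in Lemma~\ref{lemma1} — only the $\bm e_{m-1}$‑component of $\bm\beta_m^n$ is of order $1$, the current sums are of higher order, and $\bm\beta_m^n-\bm\beta_m^{n-1}$ enters through Lemma~\ref{lemma8}), that the two‑step combination collapses onto $\bm q$ by $\bm w+\bm H\bm w=2w_0\bm q$, and that the resulting scalar recursion matches $\widehat\psi_n$ exactly, for which the identity $w_0\alpha_m=1/(2m+1)$ and the precise coefficient $-2k_{n-1,n-1}'$ of Lemma~\ref{lemmaLodd} are essential. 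The even case is comparatively routine once the projection off $\bm v$ and the $\bm N\bm v=\bm0$ cancellation are set up.
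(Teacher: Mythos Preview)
Your proposal is correct and follows essentially the same strategy as the paper's proof: in the odd case you use the twice-iterated recursion of Lemma~\ref{lemmaLodd}, identify $\bm q$ as the dominant direction via $\bm w+\bm H\bm w=2w_0\bm q$, and show the error splits into $\hat\psi_n\bm q$ plus a remainder of the desired order (the paper does this by an explicit ansatz $\bm E_n=\tfrac12\mu_mh^my^{(m)}(t_{n+1})\bm q+\hat{\bm E}_n$ rather than a projection, but this is exactly your $\hat\psi_n$ since $\tfrac12\mu_m=\gamma_m/(2m+1)$); in the even case you project off $\bm v$ whereas the paper decomposes orthogonally as $\bm E_n=E_n^0\bm e_0+\hat{\bm E}_n$ and then shows $\hat{\bm E}_n$ is aligned with $\bm v$ modulo higher order --- these are dual bookkeepings of the same mechanism ($v_0=0$, $\bm N\bm v=\bm 0$, leading source $\propto\bm v$). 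The Legendre identities you invoke, $\bm c_r^{T}\bm q=-(2m+1)P_m(s_r)$ and $\bm c_r^{T}\bm v=0$, are precisely those used in the paper via $\sum_jq_jP_j(s)=P_{m+1}'(s)-(2m+1)P_m(s)$ and $\sum_jv_jP_j(s)=-2P_m'(s)$. One small point: in your projection formulation the $\bm\omega_n$-recursion also receives coupling terms $O(h^2)\psi_{n-1}$ from the $O(h)$-corrections in $\mathbb V_n^0$ (resp.\ $\mathbb W_n$); these are harmless once you feed in the global estimate $\psi_n=O(h^{\nu_1-1})$ (resp.\ $O(h^{\nu_2-1})$) from Theorem~\ref{perturbed}, but it is worth saying so explicitly.
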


\begin{proof}
	We again divide the proof into two cases.
	
	\noindent{\textbf{Case I: \bm{$m$} is odd.}}
	Set $\bm{b}^n:=(b^n_0, b^n_1, \dots, b^n_{m-1})^T$ and
	write \eqref{general_algebraic_err} as
	\begin{equation}\label{general_matrix_form_error}
		\bm{B}^n\bm{E}_n=-\sum_{l=0}^{n-1}\bm{B}^{(n,l)}\bm{E}_l
		+\bm{b}^n+\tilde{\bm{\delta}}_n.
	\end{equation}
	Using $(\bm{B}^n)^{-1}\left[\eqref{general_matrix_form_error}_n
	-\eqref{general_matrix_form_error}_{n-1}\right]
	+(\bm{B}^n)^{-1}\bm{Q}^{(n,n-1)}(\bm{B}^{n-1})^{-1}
	\left[\eqref{general_matrix_form_error}_{n-1}
	-\eqref{general_matrix_form_error}_{n-2}\right]$ and applying Lemmas
	\ref{lemma3}~and \ref{lemmaLodd}, we find that
	\begin{equation}\label{sup-odd}
		\bm{E}_n=[\bm{I}_m+h\mathbb{V}_n^0]\bm{E}_{n-2}
		+\sum_{l=0}^{n-3}h^2{\mathbb{V}_n^l}\bm{E}_l+\hat{\bm{b}}_n
		+\overline{\bm{\delta}}_n,
	\end{equation}
	where
	\begin{align*}
		\hat{\bm{b}}_n&:=(\bm{B}^n)^{-1}(\bm{b}^{n}-\bm{b}^{n-1})
		+(\bm{B}^n)^{-1}\bm{Q}^{(n,n-1)}(\bm{B}^{n-1})^{-1}(\bm{b}^{n-1}
		-\bm{b}^{n-2}),\\
		\overline{\bm{\delta}}_n&:=(\bm{B}^n)^{-1}\tilde{\bm{\delta}}_n
		+(\bm{B}^n)^{-1}\bm{Q}^{(n,n-1)}
		(\bm{B}^{n-1})^{-1}\tilde{\bm{\delta}}_{n-1}.
	\end{align*}
	Consider Lemma~\ref{lemma7} with~$p=m+2$. For $i=0$, \dots, $m-1$,
	\begin{equation}\label{b_i^n}
		b_i^n=-\sum_{j=m}^{m+1}\gamma_jh^j\biggl(
		O(h^2)+\beta^n_{i,j}y^{(j)}(t_{n+1/2})
		+\sum_{l =0}^{n-1}\beta^{(n,l)}_{i,j}y^{(j)}(t_{l+1/2})\biggr)+O(h^{m+2}),
	\end{equation}
	and by Lemmas \ref{lemma1}~and \ref{lemma8}, and Taylor expansion,
	\begin{align*}
		b^n_i-b^{n-1}_{i} &= -\beta_{i,m}^n\gamma_{m}h^my^{(m)}(t_{n+1/2})
		-\gamma_{m}h^m\sum_{l=0}^{n-1}\beta_{i,m}^{(n,l)}y^{(m)}(t_{l+1/2}) \\
		&\qquad{}+\beta_{i,m}^{n-1}\gamma_{m}h^my^{(m)}(t_{n-1/2})
		+\gamma_{m}h^m\sum_{l=0}^{n-2}\beta_{i,m}^{(n-1,l)}y^{(m)}(t_{l+1/2})
		+O(h^{m+2}) \\
		&=-(\beta_{i,m}^n-\beta_{i,m}^{n-1})\gamma_{m}h^my^{(m)}(t_{n})
		-\gamma_{m}h^m\sum_{l=0}^{n-2}(\beta_{i,m}^{(n,l)}
		-\beta_{i,m}^{(n-1,l)})y^{(m)}(t_{l}) \\
		&\qquad{}-\beta_{i,m}^{(n,n-1)}\gamma_{m}h^my^{(m)}(t_{n-1/2})
		-\frac{1}{2}\gamma_mh^{m+1}(\beta_{i,m}^n+\beta_{i,m}^{n-1})y^{(m+1)}(t_n)\\
		&\qquad{}-\frac{1}{2}\gamma_mh^{m+1}\sum_{l=0}^{n-2}(
		\beta_{i,m}^{(n,l)}-\beta_{i,m}^{(n-1,l)})y^{(m+1)}(t_l)+O(h^{m+2}).
	\end{align*}
	Again by Lemmas \ref{lemma1}~and \ref{lemma8}, we know that for~$m>1$,
	\begin{equation}\label{eq:bn-bn-1}
		\begin{aligned}
			\bm{b}^{n}-\bm{b}^{n-1}&=-(\beta_{m-1,m}^n-\beta_{m-1,m}^{n-1})
			\gamma_{m}h^my^{(m)}(t_{n})\bm{e}_{m-1} \\
			&\qquad{}-\frac{1}{2}\gamma_mh^{m+1}(
			\beta_{m-1,m}^n+\beta_{m-1,m}^{n-1})y^{(m+1)}(t_n)\bm{e}_{m-1}
			+\bm{O}(h^{m+2})\\
			&=\bigl[k'(t_n,t_n)y^{(m)}(t_{n})
			+k(t_n,t_n)y^{(m+1)}(t_{n})\bigr]\alpha_m\gamma_mh^{m+1}\bm{e}_{m-1}
			+\bm{O}(h^{m+2}).
		\end{aligned}
	\end{equation}
	When~$m=1$, the case is a little more complicated. By Lemmas \ref{lemma8}~and
	\ref{lemma1},
	\begin{align*}
		b^n_0-b^{n-1}_{0}&=-(\beta_{0,1}^n-\beta_{0,1}^{n-1})\gamma_{1}hy'(t_{n})
		-\gamma_{1}h\sum_{l=0}^{n-2}
		(\beta_{0,1}^{(n,l)}-\beta_{0,1}^{(n-1,l)})y'(t_{l})\\
		&\qquad{}-\beta_{0,1}^{(n,n-1)}\gamma_{1}hy'(t_{n-1/2})
		-\frac{\gamma_1h^2}{2}(\beta_{0,1}^n+\beta_{0,1}^{n-1})y^{(2)}(t_n)\\
		&\qquad{}-\frac{\gamma_1h^2}{2}\sum_{l=0}^{n-2}
		(\beta_{0,1}^{(n,l)}-\beta_{0,1}^{(n-1,l)})y^{(2)}(t_l)
		-\frac{\gamma_{1}h^3}{4}\sum_{l=0}^{n-2}
		(\beta_{0,1}^{(n,l)}-\beta_{0,1}^{(n-1,l)})y^{(3)}(t_{l})+O(h^{3}) \\
		&=\bigl[k'(t_n,t_n)y'(t_n)+k(t_n,t_n)y''(t_n)\bigr]\alpha_1\gamma_1h^2\\
		&\qquad{}+\biggl[-\tfrac{\partial_{2}k(t_n,t_{n-1})}{6}\,y'(t_n)
		-h\sum_{l=0}^{n-2}\tfrac{\partial_1\partial_2k(t_n,t_l)}{6}\,y'(t_l)
		\biggr]h^2\gamma_1+O(h^3).
	\end{align*}
	By \eqref{e:BQ}, we know that $(\bm{B}^n)^{-1}\bm{Q}^{(n,n-1)} = \bm{I}_m - \bm{M}^{-1}\bm{N} + \bm{O}(h)$; further
	by Lemma \ref{lemma3}, when $m>1$,
	\begin{align*}
		\hat{\bm{b}}_n&=\tfrac{1}{k(t_n,t_n)}\,\bm{M}^{-1}(\bm{b}^{n}-\bm{b}^{n-1})
		+\tfrac{1}{k(t_{n-1},t_{n-1})}\bigl(\bm{I}_m-\bm{M}^{-1}\bm{N}\bigr)
		\bm{M}^{-1}(\bm{b}^{n-1}-\bm {b}^{n-2})+\bm{O}(h^{m+2})\\
		&=\alpha_m\gamma_mh^{m+1}\Bigl(\bigl[k'_{n,n}y^{(m)}(t_{n})
		+y^{(m+1)}(t_{n})\bigr]\bm{w}
		+\bigl[k'_{n-1,n-1}y^{(m)}(t_{n})
		+y^{(m+1)}(t_{n})\bigr](\bm{w}-w_0\bm{v})\Bigr)\\
		&\qquad{}+\bm{O}(h^{m+2})\\
		&=\mu_m h^{m+1}\bigl[k'_{n-1,n-1}y^{(m)}(t_{n})
		+y^{(m+1)}(t_{n})\bigr]\bm{q}+\bm{O}(h^{m+2}),
	\end{align*}
	and when $m=1$, by noticing that for this case $\bm{q} = q_0 = 0$,
	\begin{align*}
		\hat{\bm{b}}_n&=\tfrac{1}{k(t_n,t_n)}\bm{M}^{-1}(\bm{b}^{n}-\bm{b}^{n-1})
		+\tfrac{1}{k(t_{n-1},t_{n-1})}\,(\bm{I}_m-\bm{M}^{-1}\bm{N})
		\bm{M}^{-1}(\bm{b}^{n-1}-\bm {b}^{n-2})+\bm{O}(h^{m+2})\\
		&=\biggl[-\tfrac{\partial_2k(t_n,t_{n-1})}{6k(t_n,t_n)}y'(t_n)
		-h\sum_{l=0}^{n-2}
		\tfrac{\partial_1\partial_2k(t_n,t_l)}{6k(t_n,t_n)}y'(t_l)\biggr]
		h^2\gamma_1\bm{v} \\
		&\qquad{}+\biggl[
		-\tfrac{\partial_2k(t_{n-1},t_{n-2})}{6k(t_{n-1},t_{n-1})}\,y'(t_{n-1})
		-h\sum_{l=0}^{n-3}
		\tfrac{\partial_1\partial_2k(t_{n-1},t_l)}{6k(t_{n-1},t_{n-1})}\,y'(t_l)
		\biggr]h^2\gamma_1(\bm{v}-2\bm{v})+\bm{O}(h^{3}) \\
		&=-\tfrac{1}{6}\biggl[\tfrac{\partial_2k(t_n,t_{n-1})}{k(t_n,t_n)}
		-\tfrac{\partial_2k(t_{n-1},t_{n-2})}{k(t_{n-1},t_{n-1})}
		\biggr]y'(t_n)h^2\gamma_1\bm{v}
		-\tfrac{\partial_1\partial_2k(t_n,t_{n-2})}{6k(t_n,t_n)}
		y'(t_{n-2})h^3\gamma_1\bm{v} \\
		&\qquad{}-h\sum_{l=0}^{n-3}\tfrac{1}{6}\biggl[
		\tfrac{\partial_1\partial_2k(t_{n},t_l)}{k(t_{n},t_{n})}
		-\tfrac{\partial_1\partial_2k(t_{n-1},t_l)}{k(t_{n-1},t_{n-1})}\biggr]
		y'(t_l)h^2\gamma_1\bm{v}+\bm{O}(h^{3}) \\
		&=-\tfrac{\partial_1\partial_2k(t_n,t_{n-2})}{3k(t_n,t_n)}
		y'(t_n)h^3\gamma_1\bm{v}
		-h\sum_{l=0}^{n-3}\tfrac{\partial_1^2\partial_2k(\cdot,t_l)}{6k(t_n,t_n)}
		y'(t_l)h^3\gamma_1\bm{v}+\bm{O}(h^{3}) \\
		&=\mu_1 h^{2}\bigl[k'_{n-1,n-1}y'(t_{n})+y''(t_{n})\bigr]\bm{q}+\bm{O}(h^{3}),
	\end{align*}
	with bounded coefficient $\mu_m:=2\alpha_{m}\gamma_mw_0$.
	
	Brunner et al.~\cite[Section 3.3]{DG2009} showed that if
	$K\equiv1$~and $y^{(m)}(0)=0$, then the error in the DG solution is
	$\bm{E}_n=\frac{1}{2}\mu_mh^my^{(m)}(t_{n+1/2})\bm{q}+O(h^{m+1})$, so we
	now seek a solution of~\eqref{sup-odd} in the form
	\[
	\bm{E}_n =\tfrac{1}{2}\mu_mh^my^{(m)}(t_{n+1})\bm{q}+\hat{\bm{E}}_n,
	\]
	with the aim of proving $\hat{\bm{E}}_n=\bm{O}(h^{\nu_3-1})$, where $\nu_3:=\min\{m+2, m_1-1\}$. Substituting the above ansatz into \eqref{sup-odd} gives
	\begin{align*}
		\hat{\bm{E}}_n&-(\bm{I}_m+h\mathbb{V}_n^0)\hat{\bm{E}}_{n-2}
		-\sum_{l=0}^{n-3}h^2\mathbb{V}_n^l\hat{\bm{E}}_l \\
		&=-\mu_mh^{m}\Bigl(-hk'_{n-1,n-1}y^{(m)}(t_{n})
		+\tfrac{1}{2}y^{(m)}(t_{n+1})-\tfrac{1}{2}y^{(m)}(t_{n-1})
		-hy^{(m+1)}(t_n)\Bigr)\bm{q}\\
		&\qquad{}+\frac{\mu_m}{2}\,y^{(m)}(t_{n-1})h^{m+1}\mathbb{V}_n^0\bm{q}
		+\frac{\mu_m}{2}\sum_{l=0}^{n-3}
		y^{(m)}(t_{l+1})h^{m+2}\mathbb{V}_n^l\bm{q}+\bm{O}(h^{\nu_3}).
	\end{align*}
	By Lemma~\ref{lemmaLodd}, we use $\mathbb{V}_n^l\bm{q}=\bm{O}(h)$ for each
	$l\geq 1$, and $\mathbb{V}_n^0\bm{q}=-2k'_{n-1,n-1}\bm{q}+\bm{O}(h)$. Taylor
	expansoin of the first right-hand side term gives
	\[
	\hat{\bm{E}}_n-[\bm{I}_m+h\mathbb{V}_n^0]\hat{\bm{E}}_{n-2}
	-\sum_{l=0}^{n-3}h^2\mathbb{V}_n^l\hat{\bm{E}}_l =\bm{O}(h^{\nu_3}),
	\]
	and so taking norms results in
	\[
	\|\hat{\bm{E}}_n\|\leq (1+Ch)\|\hat{\bm{E}}_{n-2}\|
	+ Ch^2\sum_{l=0}^{n-2}\|\hat{\bm{E}}_l\|+Ch^{\nu_3}.
	\]
	Note that from \eqref{general_matrix_form_error}~and \eqref{b_i^n}
	with~$n=0$, and by Taylor expansion about~$t_0$,
	\begin{align*}
		\hat{\bm{E}}_0&=\bm{E}_0-\tfrac12\mu_mh^my^{(m)}(0)\bm{q}+\bm{O}(h^{m+1})
		=(\bm{B}^0)^{-1}\bm{b}^0 +(\bm{B}^0)^{-1}\tilde{\bm{\delta}}_0
		+\bm{O}(h^{m+1})\\
		&=-\alpha_{m}\gamma_mh^my^{(m)}(t_{1/2})\bm{w}+\bm{O}(h^{\min\{m+1,m_1-1\}})
		=\bm{O}(h^{\min\{m+1, m_1-1\}}),
	\end{align*}
	since $y^{m}(0)=0$. By an induction argument and Gronwall inequalities, the
	estimate $\|\hat{\bm{E}}_n\| = O(h^{\nu_3-1})$ follows. We have thus shown that
	\begin{align*}
		\bm{E}_n =\tfrac12\mu_{m}h^{m}y^{(m)}(t_{n+1})\bm{q}+\bm{O}(h^{\nu_3-1}),
	\end{align*}
	with $\bm{q}=w_0^{-1}\bm{w}-\frac12\bm{v}=(0,3,0,7,\dots,2m-3,0)^T$,
	so from \eqref{general-local-err} with~$p=m+2$ and Lemma~\ref{lemma6},
	\begin{align*}
		e(t_n+sh)&=\sum_{j=0}^{m-1}P_j(s)E_j^n
		+\gamma_my^{(m)}(t_{n+1/2})h^mP_m(s)+O(h^{m+1}) \\
		&=\frac{\mu_{m}}{2}\,h^{m}y^{(m)}(t_{n+1})\sum_{j=0}^{m-1}P_j(s)q_j
		+\gamma_my^{(m)}(t_{n+1})h^mP_m(s)+O(h^{\nu_3-1}).
	\end{align*}
	Recalling the identity $\sum_{k=0}^{(m-1)/2}(4k+3)P_{2k+1}(s)
	=P_{m+1}'(s)$~\cite[Proposition A.2]{DG2009}, it follows that
	\begin{align*}
		\sum_{j=0}^{m-1}P_j(s)q_j&=\sum_{k=0}^{(m-3)/2}(4k+3)P_{2k+1}(s)
		=\sum_{k=0}^{(m-1)/2}(4k+3)P_{2k+1}(s)-(2m+1)P_{m}(s)\\
		&=P_{m+1}'(s)-(2m+1)P_{m}(s),
	\end{align*}
	and by noticing that $\gamma_m-\frac{1}{2}\mu_{m}(2m+1) = 0$, we have
	\begin{equation}\label{odd-e-pre}
		\begin{aligned}
			e(t_n+sh)&=\tfrac{1}{2}\mu_{m}h^{m}y^{(m)}(t_{n+1})
			\bigl[P_{m+1}'(s)-(2m+1)P_{m}(s)\bigr]
			+\gamma_my^{(m)}(t_{n+1})h^mP_m(s)+O(h^{\nu_3-1}) \\
			&=\tfrac{1}{2}\mu_{m}h^{m}y^{(m)}(t_{n+1})P_{m+1}'(s)
			+\bigl[\gamma_m-\tfrac{1}{2}\mu_{m}(2m+1)\bigr]h^m y^{(m)}(t_{n+1})P_m(s)
			+O(h^{\nu_3-1}) \\
			&=\tfrac{1}{2}\mu_{m}h^{m}y^{(m)}(t_{n+1})P_{m+1}'(s)+O(h^{\nu_3-1}).
		\end{aligned}
	\end{equation}
	Hence, we have the superconvergence at the zeros of~$P_{m+1}'(s)$.
	\medskip
	
	\noindent{\textbf{Case II: \bm{$m$} is even.}}
	Let $p=m+1$ and use $(\bm{B}^n)^{-1}[\eqref{general_matrix_form_error}_n
	-\bm{F}\eqref{general_matrix_form_error}_{n-1}]$ to deduce that
	\begin{equation}\label{eq:general even}
		\bm{E}_n=(\bm{B}^n)^{-1}\tilde{\bm{Q}}^n\bm{E}_{n-1}
		+\sum_{l=0}^{n-2}(\bm{B}^n)^{-1}\tilde{\bm{Q}}^{(n,l)}\bm{E}_{l}
		+\tilde{\bm{b}}^n+\tilde{\tilde{\bm{\delta}}}_n.
	\end{equation}
	Here, $\tilde{\tilde{\bm{\delta}}}_n:=(\bm{B}^n)^{-1}\bigl(
	\tilde{\bm{\delta}}_n-\bm{F}\tilde{\bm{\delta}}_{n-1}\bigr)$ and
	$\tilde{\bm{b}}^n := (\bm{B}^n)^{-1}\bigl(\bm{b}^n-\bm{F}\bm{b}^{n-1}\bigr)$,
	with $\tilde{\bm{Q}}^n$~and $\tilde{\bm{Q}}^{(n,l)}$ defined
	in~\eqref{e:Q}.  By \eqref{invBn}, \eqref{eq:bn-bn-1} and Lemmas
	\ref{lemma1}, \ref{lemma7} and \ref{lemma3}, we have
	\begin{align*}
		\tilde{\bm{b}}^n&=(\bm{B}^n)^{-1}(\bm{b}^n-\bm{F}\bm{b}^{n-1})
		=(\bm{B}^n)^{-1}\bigl[(\bm{b}^n-\bm{b}^{n-1})
		-\bm{N}\bm{M}^{-1}\bm{b}^{n-1}\bigr]
		=-(\bm{B}^n)^{-1}\bm{N}\bm{M}^{-1}\bm{b}^{n-1}+\bm{O}(h^{m+1})\\
		&=-\tfrac{1}{k(t_n,t_n)}\bm{M}^{-1}\bm{N}\bm{M}^{-1}\bigl(
		\gamma_mh^m\beta_{m-1,m}^{n-1}y^{(m)}(t_{n-1/2})
		+\sum_{l=0}^{n-2}\gamma_mh^m\beta_{m-1,m}^{(n-1,l)}
		y^{(m)}(t_{l+1/2})\biggr)\bm{e}_{m-1}\\
		&\qquad{}+\bm{O}(h^{m+1})\\
		&=-\tfrac{1}{k(t_n,t_n)}\biggl(\gamma_mh^m\beta_{m-1,m}^{n-1}y^{(m)}(t_{n-1/2})
		+\sum_{l=0}^{n-2}\gamma_mh^m\beta_{m-1,m}^{(n-1,l)}y^{(m)}(t_{l+1/2})
		\biggr)\bm{M}^{-1}\bm{N}\bm{w}+\bm{O}(h^{m+1}) \\
		&=-\tfrac{1}{k(t_n,t_n)}\biggl(\gamma_mh^m\beta_{m-1,m}^{n-1}y^{(m)}(t_{n-1/2})
		+\sum_{l=0}^{n-2}\gamma_mh^m\beta_{m-1,m}^{(n-1,l)}y^{(m)}(t_{l+1/2})
		\biggr)w_0\bm{v}+\bm{O}(h^{m+1}).
	\end{align*}
	When~$m>1$, by Lemma~\ref{lemma1},
	\[
	\tilde{\bm{b}}^n=h^m\alpha_{m}\gamma_my^{(m)}(t_{n-1/2})w_0\bm{v}+O(h^{m+1}).
	\]
	When~$m=1$, the case is a little more complicated. By Lemma~\ref{lemma1},
	\[
	\tilde{\bm{b}}^n=h\gamma_1 \biggl[\alpha_{1}y'(t_{n-1/2})
	+\frac{h}{6}\sum_{l=0}^{n-2}\partial_2k(t_{n-1},t_l)y'(t_{l+1/2})
	\biggr]w_0\bm{v}+O(h^{2}).
	\]
	Therefore, we can write $\tilde{\bm{b}}^n=\hat{a}_nh^m\bm{v} + O(h^{m+1})$
	where the scalars~$\hat{a}_n$ are bounded as~$h\to0$.
	
	By Lemma~\ref{lemmaLeven}, Equation~\eqref{eq:general even} becomes
	\begin{equation}\label{sup_err_eq}
		\begin{aligned}
			\bm{E}_n&=(\bm{I}_m+h\mathbb{W}_n)\bm{E}_{n-1}+\hat{a}_nh^m\bm{v}
			-\frac{h^2}{k(t_n,t_n)}\sum_{l=0}^{n-2}\partial_1^2k(t_{n-1/2},t_l)
			\bm{N}\bm{E}_l
			+ \tilde{\tilde{\bm{\delta}}}_n+\bm{O}(h^{m+1}) \\
			&=(1-hk_{n,n}')\bm{E}_{n-1} +
			\hat{d}_{n,1}h^{\nu_2}\bm{v}+h\bar{a}_n^1E_{n-1}^0\bm{e}_0
			-\frac{h^2}{k(t_n,t_n)}\sum_{l=0}^{n-2}
			\partial_1^2k(t_{n-1/2},t_l)E_l^0\bm{e}_0
			+\tilde{\tilde{\bm{\delta}}}_n+\bm{O}(h^{\nu_2+1}),
		\end{aligned}
	\end{equation}
	where we have used the global convergence result in Theorem~\ref{perturbed}
	that $\|\bm{E}_n\|_\infty=\bm{O}(h^{\nu_2-1})$, and the scalars~$\hat{d}_{n,1}$
	are bounded as~$h\to0$. We now split $\bm{E}_n$ into orthogonal pieces,
	writing it as $\bm{E}_n=E_n^0\bm{e}_0+\hat{\bm{E}}_n$, where $\hat{E}_n^0=0$.
	Noting that $v_0=0$ when $m$ is even, we seee that \eqref{sup_err_eq}
	decouples into the two equations,
	\begin{equation}\label{decouple1}
		E_n^0 =(1+O(h))E_{n-1}^0 + \sum_{l=0}^{n-2}O(h^2)E_l^0 +O(h^{\nu_4}),
	\end{equation}
	and
	\begin{equation}\label{decouple2}
		\begin{aligned}
			\hat{\bm{E}}_n&= (1-hk_{n,n}')\hat{\bm{E}}_{n-1}+\hat{d}_{n,1}h^{\nu_2}\bm{v}
			+\tilde{\tilde{\bm{\delta}}}_n+\bm{O}(h^{\nu_2+1})\\
			&=(1-hk_{n,n}')\hat{\bm{E}}_{n-1}+\hat{d}_{n,1}h^{\nu_2}\bm{v}
			+\bm{O}(h^{\nu_4}),
		\end{aligned}
	\end{equation}
	with~$\nu_4:=\min\{m+1, m_1-1\}$. Using the triangle inequality on the scalar
	equation~\eqref{decouple1} gives
	\[
	|E_n^0| \leq (1+Ch) |E_{n-1}^0| + Ch^2 \sum_{l=0}^{n-2}|E_l^0| + Ch^{\nu_4}.
	\]
	Similarly to the induction in the previous proof, we have
	$|E_n^0|\leq Ch^{\nu_4-1}$, and a simple inductive argument on the vector
	equation~\eqref{decouple2} gives
	\[
	\hat{\bm{E}}_n=\bar{d}_{n,0}\hat{\bm{E}}_0+\bar{d}_{n,1}h^{\nu_2-1}\bm{v}
	+\bm{O}(h^{\nu_4-1}),
	\]
	where $\bar{d}_{n,0}:=\prod_{p=1}^{n}(1-hk_{p,p}')$~and
	$\bar{d}_{n,1}:=h\sum_{p=1}^{n}[\prod_{q=p+1}^n(1-hk_{q,q}')]\hat{d}_{p,1}$ are
	bounded as~$h\to0$.
	
	From \eqref{general_algebraic_err} with~$n=0$, and Lemma~\ref{lemma7}
	with~$p=m+1$, we can obtain
	\begin{equation}\label{e:E0-even}
		\bm{E}_0=O(h^{\min\{m, m_1-1\}}).
	\end{equation}
	Therefore,
	\[
	\bm{E}_n=\bar{d}_{n,1}h^{\nu_2-1}\bm{v}+\bm{O}(h^{\nu_4-1}),
	\]
	so from~\eqref{general-local-err} with~$p=m+1$, and Lemma~\ref{lemma6},
	\begin{equation}\label{even_sup_e}
		e(t_n+sh)=\sum_{j=0}^{m-1}P_j(s)E_j^n+O(h^{m})
		=\bar{d}_{n,1}h^{\nu_2-1}\sum_{j=0}^{m-1}v_jP_j(s)+O(h^{\nu_4-1}).
	\end{equation}
	Since~$m$ is even~\cite[Proposition A.2]{DG2009},
	\begin{equation}\label{e:vp}
		\sum_{j=0}^{m-1}v_jP_j(s)=-2\sum_{k=0}^{(m-2)/2}(4k+3)P_{2k+1}(s)=-2P_m'(s),
	\end{equation}
	and so
	\[
	e(t_n+sh)=-2\bar{d}_{n,1}h^{\nu_2-1}P_m'(s)+O(h^{\nu_4-1}),
	\]
	which implies that $e(t_n+sh)$ has order of accuracy~$h^{\nu_4-1}$ at the
	$m-1$~roots of~$P_m'(s)=0$, i.e., at the interior Lobatto points.
\end{proof}

\section{Convergence analysis for index-2 IAEs} \label{sec:4}
\subsection{The global convergence of DG methods for index-2 IAEs}\label{subsec:4.1}

The convergence analysis of the index-$2$ IAE~\eqref{IAE} benefits
significantly from the convergence theory established for the perturbed DG
method for first-kind VIEs in Theorem~\ref{perturbed}.

\begin{theorem}\label{thr4}
	Assume that the hypotheses of Theorem~\ref{thr1} are satisfied for~$d=m$, so
	that the exact solution of the IAE~\eqref{IAE} satisfies
	$x_1$, $x_2\in C^{m+2}(I)$.  Then, for all uniform meshes~$I_h$ with $h<\bar h$,
	the DG solution $x_{1,h}$, $x_{2,h}\in S_{m-1}^{(-1)}(I_h)$ converges
	to the exact solution uniformly on~$I$ with the attainable global order of
	convergence described by
	\[
	\| x_1-x_{1,h} \|_\infty:=\sup_{t\in I}|x_1(t)-x_{1,h}(t)|
	\leq C\times\begin{cases}
		h^{m},&\text{if $m$ is odd}, \\
		h^{m-1},&\text{if $m$ is even},
	\end{cases}
	\]
	and
	\[
	\|x_2-x_{2,h}\|_\infty:=\sup_{t\in I}|x_2(t)-x_{2,h}(t)|
	\leq C\times\begin{cases}
		h^{m-2},&\text{if $m$ is odd}, \\
		h^{m-3},&\text{if $m$  is even}.
	\end{cases}
	\]
\end{theorem}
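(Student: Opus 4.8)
The plan is to exploit the block structure of~\eqref{IAE} and of its DG discretisation so as to reduce the analysis of each component to the perturbed first-kind theory of Theorem~\ref{perturbed}, applied once for $x_1$ and once for $x_2$.

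For $x_1$: the second equation of~\eqref{IAE} is, by itself, the first-kind VIE $\int_0^t K_{21}(t,s)x_1(s)\,\ud s=f_2(t)$, and $|K_{21}(t,t)|$ is bounded below by a positive constant on $I$ (since $|K_{21}(t,t)K_{12}(t,t)|\ge k_0$ and both kernels are continuous). On the discrete side, \eqref{DGeq2} involves only $x_{1,h}$ and is precisely the DG scheme for this VIE; equivalently, the second block-row of~\eqref{algebraic-eq} together with the invertibility of $\bm{B}_{21}^n$ for small $h$ (Lemmas~\ref{lemma2} and~\ref{l:M}) determines $\bm{U}_n^1$ recursively and independently of $\bm{U}_n^2$. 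Hence $x_{1,h}$ coincides with the unperturbed ($\delta\equiv0$) DG solution of this first-kind VIE, and Theorem~\ref{perturbed} with $m_1$ taken arbitrarily large gives $\|x_1-x_{1,h}\|_\infty\le Ch^{m}$ for odd $m$ and $\le Ch^{m-1}$ for even $m$.

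For $x_2$: moving the $x_1$-terms in the first equation of~\eqref{IAE} to the right-hand side turns it into the first-kind VIE $\int_0^t K_{12}(t,s)x_2(s)\,\ud s=g(t)$, where $g(t):=f_1(t)-x_1(t)-\int_0^t K_{11}(t,s)x_1(s)\,\ud s$; here $g(0)=f_1(0)-x_1(0)=0$ (evaluate~\eqref{IAE} at $t=0$) and $|K_{12}(t,t)|$ is again bounded below by a positive constant. The same manipulation applied to~\eqref{DGeq1} shows that $x_{2,h}$ is exactly the perturbed DG solution~\eqref{yh} of this VIE with the perturbation
\[
\delta(t)=\bigl(x_1(t)-x_{1,h}(t)\bigr)+\int_0^t K_{11}(t,s)\bigl(x_1(s)-x_{1,h}(s)\bigr)\,\ud s ,
\]
so that $\|\delta\|_\infty\le(1+T\|K_{11}\|_\infty)\|x_1-x_{1,h}\|_\infty=O(h^{m_1})$ with $m_1=m$ when $m$ is odd and $m_1=m-1$ when $m$ is even, by the previous paragraph. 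Invoking Theorem~\ref{perturbed} once more, now with $k=K_{12}$, this $g$ and this $\delta$, yields $\|x_2-x_{2,h}\|_\infty\le Ch^{\min\{m,\,m_1-2\}}=Ch^{m-2}$ for odd $m$, and $\le Ch^{\min\{m-1,\,m_1-2\}}=Ch^{m-3}$ for even $m$, which is the claim; the smoothness of $g$, $f_2$ and the kernels required for these two applications of Theorem~\ref{perturbed} is furnished by the hypotheses of Theorem~\ref{thr1}.

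The reduction above is short, so the real work sits upstream in Theorem~\ref{perturbed}; the exact two-order loss for $x_2$ is simply inherited from the ``$m_1-2$'' appearing there. Two points need care. First, the perturbed framework of Section~\ref{sec:3} must tolerate a perturbation $\delta$ that is merely $O(h^{m_1})$ in the sup-norm and is not a piecewise polynomial; this is harmless because only its Legendre moments $\delta_{n,i}=\int_0^1\delta(t_n+sh)P_i(s)\,\ud s$, which satisfy $|\delta_{n,i}|\le C\|\delta\|_\infty$, enter the error recursions in the proof of Theorem~\ref{perturbed}. Second, one should record that $|K_{12}(t,t)|$ and $|K_{21}(t,t)|$ are each positive on $I$ and that $g(0)=0$, so that Theorem~\ref{perturbed} genuinely applies to both first-kind VIEs.
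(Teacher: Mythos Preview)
Your proof is correct and follows essentially the same route as the paper: first read off the convergence of $x_{1,h}$ from the second block row~\eqref{DGeq2} as an unperturbed DG scheme for a first-kind VIE, then recast the error equation coming from~\eqref{DGeq1} as a perturbed first-kind DG scheme for $x_2$ with perturbation $\delta=O(\|e_{1,h}\|_\infty)$ and apply Theorem~\ref{perturbed} again. Your additional remarks (that $|K_{12}(t,t)|$ and $|K_{21}(t,t)|$ are individually bounded below, that $g(0)=0$, and that only the Legendre moments of $\delta$ enter the recursion) simply make explicit points the paper leaves implicit.
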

\begin{proof}
	Recall that $x_1$~and $x_2$ satisfy the equations
	\begin{equation}\label{eq1}
		\int_{t_n}^{t_{n+1}}x_{1}(s)\phi(s)\,\ud s
		+\int_{t_n}^{t_{n+1}}\biggl[\int_0^s\biggl(
		K_{11}(s,\tau)x_1(\tau)+K_{12}(s,\tau)x_2(\tau)\biggr)\,\ud\tau
		\biggr]\phi(s)\,\ud s
		=\int_{t_n}^{t_{n+1}}f_1(s)\phi(s)\,\ud s
	\end{equation}
	and
	\begin{equation}\label{eq1'}
		\int_{t_n}^{t_{n+1}}\int_0^s\biggl(K_{21}(s,\tau)x_1(\tau)\,\ud\tau
		\biggr)\phi(s)\,\ud s=\int_{t_n}^{t_{n+1}}f_2(s)\phi(s)\,\ud s.
	\end{equation}
	Let $e_{p,h}:=x_p-x_{p,h}$ for $p=1$, $2$. Since the second equation
	of~\eqref{IAE} is a VIE of the first kind, by Theorem \ref{perturbed}
	and Brunner et al.~\cite[Theorems 4.3 and 4.5]{DG2009}, we have
	\begin{equation}
		\| e_{1,h} \|_\infty=\|x_{1}-x_{1,h}\|_\infty
		\leq C\times\begin{cases}
			h^{m},&\text{if $m$ is odd}, \\
			h^{m-1},&\text{if $m$ is even}.
		\end{cases}
	\end{equation}
	From \eqref{eq1}~and \eqref{DGeq1}, we obtain the following system of error
	equations:
	\begin{equation}\label{e2h}
		\int_{t_n}^{t_{n+1}}\biggl(\int_0^s K_{12}(s,\tau)e_{2,h}(\tau)\,ud\tau
		\biggr)\phi(s)\,\ud s
		=-\int_{t_n}^{t_{n+1}}\biggl[e_{1,h}(s)+\biggl(
		\int_0^s K_{11}(s,\tau)e_{1,h}(\tau)\,\ud\tau
		\biggr)\biggr]\phi(s)\,\ud s.
	\end{equation}
	The structure of the above equation is as same as~\eqref{yh}, so again by
	Theorem~\ref{perturbed}, we have
	\[
	\|e_{2,h}\|_\infty\leq C\times\begin{cases}
		h^{m-2},&\text{if $m$ is odd}, \\
		h^{m-3},&\text{if $m$ is even}.
	\end{cases}
	\]
	The proof is complete.
\end{proof}
\begin{remark}\label{rmk1}
	Liang and Brunner~\cite{2016siam} considered the discontinuous
	piecewise polynomial collocation (DC) method for the index-$2$ IAE.
	For our DG convergence results, odd~$m$ case corresponds to the
	case~$\rho_m = -1$, and the even~$m$ case corresponds to the case
	$\rho_m =1$. However, for the DC method, there is no superconvergence analysis
	for the index-$2$ IAE~\eqref{IAE}, whereas in the following we will show the
	superconvergence property of the DG methods.
\end{remark}

\subsection{The superconvergence analysis of the DG method for index-2 IAEs}\label{subsec:4.2}

By Theorem~\ref{sup_perturbed} (see also Brunner et
al.~\cite[Theorems 4.3 and 4.5]{DG2009}), we have the following
superconvergence result for the DG solution without perturbation of the
first-kind VIE~\eqref{V1}.
\begin{theorem}\label{thr5}
	Let $\tilde{y}_h\in S_{m-1}^{(-1)}(I_h)$ be the DG solution for~\eqref{V1}.
	\begin{enumerate}
		\item Assume that $g\in C^{d+4}(I)$, $k\in C^{d+4}(D)$, if $m$ is odd,
		and the exact solution $y$ satisfies $y^{(m)}(0)=0$, then for each $n$,
		\begin{align*}
			|y(t_n+s_rh)-\tilde{y}_h(t_n+s_rh)|\leq Ch^{m+1},
		\end{align*}
		where the $\{s_r\}_{r=0}^{m-1}$ satisfy $P_{m+1}'(s_r)=0$.
		\item Assume that $g\in C^{d+3}(I)$, $k\in C^{d+3}(D)$, if $m$ is even, then
		for each $n$,
		\begin{align*}
			|y(t_n+s_rh)-\tilde{y}_h(t_n+s_rh)|\leq Ch^{m},
		\end{align*}
		where the $\{s_r\}_{r=1}^{m-1}$ satisfy $P_{m}'(s_r)=0$.
	\end{enumerate}
\end{theorem}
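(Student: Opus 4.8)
The plan is to obtain Theorem~\ref{thr5} as the \emph{unperturbed} specialization of Theorem~\ref{sup_perturbed}. The DG solution $\tilde y_h$ of~\eqref{V1} is precisely the solution of~\eqref{yh} with $\delta\equiv0$, and the zero function trivially satisfies $\delta=O(h^{m_1})$ for \emph{every} exponent $m_1$. So I would fix an arbitrarily large $m_1$, say $m_1:=m+3$; then the bounds of Theorem~\ref{sup_perturbed} read $O(h^{\min\{m+1,m_1-2\}})=O(h^{m+1})$ in the odd case and $O(h^{\min\{m,m_1-2\}})=O(h^{m})$ in the even case, and the superconvergence points $\{s_r\}$ (the zeros of $P'_{m+1}$, resp.\ of $P'_m$) are exactly those asserted here. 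Since $\delta\equiv0$, no smoothness is needed to control the perturbation terms in that proof; they simply vanish. The regularity needed by Theorem~\ref{sup_perturbed} is $g\in C^{m+4}(I)$, $k\in C^{m+4}(D)$ (odd) and $g\in C^{m+3}(I)$, $k\in C^{m+3}(D)$ (even), and these are implied by the hypotheses $g\in C^{d+4}(I)$, $k\in C^{d+4}(D)$ (resp.\ $C^{d+3}$) assumed here, with $d\ge m$. In the odd case the hypothesis $y^{(m)}(0)=0$ is carried over verbatim: it is exactly what gives $\hat{\bm E}_0=\bm O(h^{m+1})$ in the proof of Theorem~\ref{sup_perturbed}, which then propagates through the discrete Gronwall estimate. (One may equally cite Brunner et al.~\cite[Theorems 4.3 and 4.5]{DG2009}, whose argument is the $\delta\equiv0$ version.)

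If a self-contained write-up is preferred instead of quoting the perturbed theorem, I would replay the two-case induction of the proof of Theorem~\ref{sup_perturbed} with all $\delta$-terms deleted. In the odd case: derive~\eqref{sup-odd} without $\overline{\bm\delta}_n$, substitute the ansatz $\bm E_n=\tfrac12\mu_m h^m y^{(m)}(t_{n+1})\bm q+\hat{\bm E}_n$, and show $\hat{\bm E}_n=\bm O(h^{m+1})$ using $\mathbb V_n^0\bm q=-2k'_{n-1,n-1}\bm q+\bm O(h)$, $\mathbb V_n^l\bm q=\bm O(h)$ (Lemma~\ref{lemmaLodd}), $\hat{\bm E}_0=\bm O(h^{m+1})$ (which needs $y^{(m)}(0)=0$), and a Gronwall argument; then combine the identity $\sum_{j=0}^{m-1}P_j(s)q_j=P'_{m+1}(s)-(2m+1)P_m(s)$ with $\gamma_m-\tfrac12\mu_m(2m+1)=0$ to conclude $e(t_n+sh)=\tfrac12\mu_m h^m y^{(m)}(t_{n+1})P'_{m+1}(s)+O(h^{m+1})$, hence order $h^{m+1}$ at the zeros of $P'_{m+1}$. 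In the even case: derive~\eqref{sup_err_eq} without $\tilde{\tilde{\bm\delta}}_n$, split $\bm E_n=E_n^0\bm e_0+\hat{\bm E}_n$, solve the decoupled recursions~\eqref{decouple1}--\eqref{decouple2}, and use $\sum_{j=0}^{m-1}v_jP_j(s)=-2P'_m(s)$ to get $e(t_n+sh)=-2\bar d_{n,1}h^{m-1}P'_m(s)+O(h^m)$, i.e.\ order $h^m$ at the interior Lobatto points.

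The only delicate point — which I regard as the main, if minor, obstacle — is to make the reduction rigorous: one must check that in the proof of Theorem~\ref{sup_perturbed} every occurrence of $\delta$ enters only through the bound $\delta=O(h^{m_1})$, and that its regularity requirements are stated independently of $m_1$, so that taking $m_1$ as large as the target order demands costs nothing and keeps the constant $C$ independent of $h$. Granting this, Theorem~\ref{thr5} is immediate.
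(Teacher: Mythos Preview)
Your proposal is correct and is exactly the paper's approach: the paper does not give a separate proof of Theorem~\ref{thr5} but simply presents it as the $\delta\equiv0$ specialization of Theorem~\ref{sup_perturbed}, together with a citation to Brunner et al.~\cite[Theorems~4.3 and~4.5]{DG2009}. Your choice of $m_1=m+3$ and the observation that all $\delta$-terms then drop out (or are dominated) is precisely the intended reduction.
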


Now we will show that the local superconvergence of the index-2 IAE~\eqref{IAE}
is inherited from the above superconvergence result for first-kind VIEs, and
that the superconvergence result of the component~$x_2$ is global for~$m$ odd.

\begin{theorem}\label{thr6}
	Assume that the conditions of Theorem~\ref{thr4} are satisfied when~$m$ is
	even, and if $m$ is odd, the conditions of Theorem~\ref{thr1} hold such that
	exact solutions $x_1$, $x_2 \in C^{m+3}(I)$, with $x_1$ satisfying
	$x_1^{(m)}(0)=0$. Then, we have the following superconvergence results for the
	DG solution:
	\begin{align*}
		|e_{1,h}(t_n+s_rh)| & \leq C\times\begin{cases}
			h^{m+1},&\text{if $m$ is odd}, \\
			h^{m},&\text{if $m$ is even}, \\
		\end{cases}\\
		\| e_{2,h} \|_\infty = \sup_{t\in I} |e_{2,h}(t)|
		&\leq Ch^{m-1},\quad\text{if $m$ is odd}, \\
		|e_{2,h}(t_n+s_rh)| &\leq Ch^{m-2},\quad\text{if $m$ is even}.
	\end{align*}
	Here, if $m$ is odd, $\{s_r\}_{r=0}^{m-1}$ satisfy $P_{m+1}'(s_r)=0$,
	whereas if $m$ is even, $\{s_r\}_{r=1}^{m-1}$ satisfy $P_{m}'(s_r)=0$.
\end{theorem}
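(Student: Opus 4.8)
The plan is to reduce each component to a statement about (perturbed) DG solutions of first-kind VIEs, and then to exploit the special structure that the perturbation inherits from the error in the first component. For $e_{1,h}$: the second equation of~\eqref{IAE} is the first-kind VIE $\int_0^t K_{21}(t,s)x_1(s)\,\ud s=f_2(t)$ and, by~\eqref{DGeq2}, $x_{1,h}$ is exactly its \emph{unperturbed} DG solution; hence Theorem~\ref{thr5} applies verbatim and gives $|e_{1,h}(t_n+s_rh)|\le Ch^{m+1}$ when $m$ is odd (this is the only place the hypothesis $x_1^{(m)}(0)=0$ is used) and $|e_{1,h}(t_n+s_rh)|\le Ch^{m}$ when $m$ is even. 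Moreover, specialising the computations in the proof of Theorem~\ref{sup_perturbed} to $\delta=0$, on each subinterval $e_{1,h}$ has the leading form $\tfrac12\mu_m h^m x_1^{(m)}(t_{n+1})P_{m+1}'(s)+O(h^{m+1})$ for $m$ odd, and $-2\bar d_{n,1}h^{m-1}P_m'(s)+O(h^m)$ for $m$ even; the crucial feature is that $P_{m+1}'$ and $P_m'$ are linear combinations of odd-degree shifted Legendre polynomials, so they have no $P_0$-component and $\int_0^1 P_{m+1}'(s)\,\ud s=\int_0^1 P_m'(s)\,\ud s=0$.

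For $e_{2,h}$: subtracting~\eqref{DGeq1} from~\eqref{eq1} gives the error equation~\eqref{e2h}, which is of the form~\eqref{yh} of a perturbed DG scheme for the first-kind VIE with kernel $K_{12}$, vanishing right-hand side (so the exact solution is $y\equiv0$, trivially meeting every smoothness requirement and $y^{(m)}(0)=0$), and perturbation $\delta(s)=-\bigl[e_{1,h}(s)+\int_0^s K_{11}(s,\tau)e_{1,h}(\tau)\,\ud\tau\bigr]$. By Theorem~\ref{thr4}, $\|e_{1,h}\|_\infty=O(h^\mu)$ with $\mu:=m$ if $m$ is odd and $\mu:=m-1$ if $m$ is even, so $\delta=O(h^\mu)$; a direct appeal to Theorems~\ref{perturbed}--\ref{sup_perturbed} with $m_1=\mu$ only reproduces the orders of Theorem~\ref{thr4}, and the gain comes from structural facts about $\delta$ that follow from the leading form of $e_{1,h}$: (i) since the leading part of $e_{1,h}$ has zero average on each subinterval, a Taylor expansion of $K_{11}$ on each subinterval gives $\int_0^t K_{11}(t,\tau)e_{1,h}(\tau)\,\ud\tau=O(h^{\mu+1})$ globally; (ii) since $x_1^{(m)}$ (resp.\ the coefficient $\bar d_{n,1}$) varies by $O(h)$ between consecutive mesh points, the discrete increments of the Legendre coefficients of $\delta$ obey $\bm\delta_n-\bm\delta_{n-1}=O(h^{\mu+1})$, where~(i) is used to control the increment of the integral term; and (iii) for $m$ even the even-indexed coefficients $\delta_{n,i}$, in particular $\delta_{n,0}$, are $O(h^m)$ because $\int_0^1 P_m'(s)P_i(s)\,\ud s=0$ for even $i$.

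I would then re-run the arguments of Subsections~\ref{subsec:3.1}--\ref{subsec:3.2} for $e_{2,h}$, using crucially that $\bm b^n=\bm\rho^n=\bm 0$ because the exact solution vanishes, so the only forcing is $\delta$. When $m$ is odd, the recursion~\eqref{original-K-odd}--\eqref{global_odd} for $\bm E_n$ now has forcing $\tfrac1h(\bm\delta_n-\bm\delta_{n-1})=\bm O(h^{m})$ by~(ii); together with $\bm H^2=\bm I_m$, $\bm G=\bm 0$ (Lemma~\ref{lemma4}) and the discrete Gronwall inequality this gives $\|\bm E_n\|_\infty=O(h^{m-1})$, hence the \emph{global} estimate $\|e_{2,h}\|_\infty\le Ch^{m-1}$. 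When $m$ is even, I would follow the decoupling $\bm E_n=E_n^0\bm e_0+\hat{\bm E}_n$ from the proof of Theorem~\ref{sup_perturbed}, Case~II: by~(ii)--(iii) the $\bm e_0$-component of the perturbation, after the $\tfrac1h$-scaling and multiplication by $(\bm B^n)^{-1}$, is $O(h^{m-1})$ (the first row of $(\bm B^n)^{-1}$ essentially reads off only the odd-indexed components, whose increments are $O(h^{\mu+1})$), so --- absorbing the nonlocal $\partial_1^2 k$ self-interaction term through the discrete Gronwall inequality --- one gets the improved bound $E_n^0=O(h^{m-2})$, while $\hat{\bm E}_n=c_n\bm v+\bm O(h^{m-2})$ for some scalars $c_n$ whose $\bm v$-contribution retains only the global order $O(h^{m-3})$. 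Since $\sum_{j=0}^{m-1}v_jP_j(s)=-2P_m'(s)$ vanishes at the interior Lobatto points $s_r$, evaluating the local error expansion~\eqref{even_sup_e} at $s_r$ leaves $E_n^0+O(h^{m-2})=O(h^{m-2})$, the asserted local bound.

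The main obstacle is making the structural estimates~(i)--(iii) precise --- especially the cancellation in~(i), which needs the kernel expanded on each subinterval and the moments $\int_0^1 s^kP_{m+1}'(s)\,\ud s$ (and their analogues for $P_m'$) tracked --- and then checking that the gained order of smallness of $\delta$ survives the matrix recursions. The bookkeeping is delicate when $m$ is even: one has to follow which vectors are small along the invariant directions spanned by $\bm e_0$ and $\bm v$ (equivalently, along even- and odd-indexed components), and the bootstrap yielding $E_n^0=O(h^{m-2})$ rather than the naive $O(h^{m-3})$ must be carried out through the discrete Gronwall inequality, since the $\partial_1^2 k$ term couples $E_n^0$ to all earlier $E_l^0$.
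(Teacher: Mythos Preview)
Your approach is essentially the paper's: reduce $e_{2,h}$ to a perturbed first-kind DG problem driven by $\delta$ built from $e_{1,h}$, then exploit that the leading part of $e_{1,h}$ is a known multiple of $P_{m+1}'(s)$ (odd $m$) or $P_m'(s)$ (even $m$) to upgrade the forcing in the Gronwall recursions; your $y\equiv0$ framing is a clean simplification (it makes $\bm b^n=\bm\rho^n=\bm 0$, so in the odd case the single-difference recursion~\eqref{global_odd} already yields the global $O(h^{m-1})$ bound, whereas the paper goes through~\eqref{sup-odd}). One small correction in the even case: the bound $[\tilde{\tilde{\bm\delta}}_n]_0=O(h^{m-1})$ does not follow from ``the first row of $(\bm B^n)^{-1}$ reads off odd-indexed components of $\tilde{\bm\delta}_n$'' (those are only $O(h^{m-2})$); rather, split $\tilde{\bm\delta}_n-\bm F\tilde{\bm\delta}_{n-1}=(\tilde{\bm\delta}_n-\tilde{\bm\delta}_{n-1})-\bm N\bm M^{-1}\tilde{\bm\delta}_{n-1}$, use~(ii) for the first piece, and note the second piece lies in $\mathbb R\bm e_0$ so that $(\bm B^n)^{-1}$ sends it to $\mathbb R\bm v+O(h)$ with $v_0=0$.
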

\begin{proof}
	The results for~$\vert e_{1,h}(t_n+s_rh) \vert$ are obtained directly from
	Theorem~\ref{thr5}. For the analysis of~$e_{2,h}$, we treat two
	cases separately.
	\bigskip
	
	\noindent\textbf{Case I: $m$ is odd}.
	Note that the perturbed term~$\delta$ in~\eqref{yh} now has the specific
	form
	\[
	\delta(s)=-e_{1,h}(s)-\int_{0}^{s}K_{11}(s, \tau)e_{1,h}(\tau)\,\ud\tau,
	\]
	so
	\begin{equation}\label{specific-delta}
		\delta(t_n+sh)=-e_{1,h}(t_n+sh)
		-\int_{0}^{t_n+sh}K_{11}(t_n+sh,\tau)e_{1,h}(\tau)\,\ud\tau.
	\end{equation}
	Therefore, by Taylor expansion and Theorem~\ref{thr4}, we have
	\begin{align*}
		\delta(t_n+sh)-\delta(t_{n-1}+sh)&=-[e_{1,h}(t_n+sh)-e_{1,h}(t_{n-1}+sh)]\\
		&\qquad{}-\int_{0}^{t_{n-1}+sh}\bigl[
		K_{11}(t_n+sh,\tau)-K_{11}(t_{n-1}+sh,\tau)
		\bigr]e_{1,h}(\tau)\,\ud\tau\\
		&\qquad{}-\int_{t_{n-1}+sh}^{t_n+sh}K_{11}(t_n+sh,\tau)e_{1,h}(\tau)\,\ud\tau\\
		&=-[e_{1,h}(t_n+sh)-e_{1,h}(t_{n-1}+sh)] + O(h) \|e_{1,h}\|_{\infty} \\
		&=-[e_{1,h}(t_n+sh)-e_{1,h}(t_{n-1}+sh)] + O(h^{m+1}).
	\end{align*}
	Since $e_{1,h}$ is the DG error of the first-kind VIE, and we have assumed
	$x_1^{(m)}(0)=0$ for~$m$ odd, by~\eqref{odd-e-pre} the error
	component~$e_{1,h}(t_n+sh)$ can be written as
	\begin{align*}
		e_{1,h}(t_n+sh)=\tfrac{1}{2}\mu_{m}h^{m}x_1^{(m+1)}(t_{n+1})P_{m+1}'(s)
		+O(h^{m+1}).
	\end{align*}
	Thus, by Taylor expansion,
	\[
	e_{1,h}(t_n+sh)-e_{1,h}(t_{n-1}+sh)
	=\tfrac{1}{2}\mu_{m}h^{m}\bigl[
	x_1^{(m+1)}(t_{n+1})-x_1^{(m+1)}(t_{n})\bigr]P_{m+1}'(s)
	+O(h^{m+1})=O(h^{m+1}),
	\]
	and so
	\[
	\delta(t_n+sh)-\delta(t_{n-1}+sh)=O(h^{m+1}).
	\]
	Returning to the proof of Theorem~\ref{sup_perturbed} in the case when~$m$ is
	odd, we have $\overline{\bm{\delta}}_n=O(h^{m})$. Then, from~\eqref{sup-odd}
	and the estimate for~$\hat{\bm{b}}_n$ in the proof of
	Theorem~\ref{sup_perturbed}, by an induction argument and Gronwall's
	inequality, we have $e_{2,h}(t_n+sh)=O(h^{m-1})$.
	\bigskip
	
	\noindent\textbf{Case II: $m$ is even.} From~\eqref{e:E0-even} without the
	perturbation, we have $\bm{E}_0=\bm{O}(h^m)$, so $e_{1,h}(t_0+sh)=O(h^m)$. Then
	from~\eqref{specific-delta} with~$n=0$, and Theorem~\ref{thr4},
	\[
	\delta(t_0+sh)=-\int_{0}^{sh}K_{11}(sh,\tau)\,\ud\tau\|e_{1,h}\|_\infty+O(h^m)
	=O(h)\|e_{1,h}\|_\infty+O(h^m)=O(h^m),
	\]
	which means $\tilde{\tilde{\bm{\delta}}}_0=O(h^{m-1})$. Moreover,
	from~\eqref{specific-delta},
	\begin{align*}
		\delta(t_n+sh)&=-e_{1,h}(t_n+sh)
		-\int_{0}^{t_n+sh}K_{11}(t_n+sh,\tau)e_{1,h}(\tau)\,\ud\tau \\
		&=-e_{1,h}(t_n+sh)
		-h\int_{0}^{s}K_{11}(t_n+sh,t_n+\tau h)e_{1,h}(t_n+\tau h)\,\ud\tau\\
		&\qquad{}-h\sum_{l=0}^{n-1}\int_0^1
		K_{11}(t_n+sh,t_l+\tau h)e_{1,h}(t_l+\tau h)\,\ud\tau.
	\end{align*} 
	From~\eqref{even_sup_e} without perturbation, and \eqref{e:vp}, we have
	\[
	e_{1,h}(t_n+sh)=\tilde{\bar{d}}_{n,0}h^{m-1}\sum_{j=0}^{m-1}v_jP_j(s)+O(h^{m})
	=-2\tilde{\bar{d}}_{n,0}h^{m-1}P_m'(s)+O(h^m),
	\]
	where the scalars~$\tilde{\bar{d}}_{n,0}$ are bounded as~$h\to0$.
	Further, since $P_m'(s)$ is a real polynomial of degree not exceeding~$m-1$, we
	can use Lagrange interpolation to get $P_m'(s)=\sum_{r=1}^{m-1}P_m'(s_r)L_r(s)$,
	so that
	\begin{align*}
		\delta(t_n+sh)&=2\tilde{\bar{d}}_{n,0}h^{m-1}\sum_{r=1}^{m-1}P_m'(s_r)L_r(s)
		+O(h^m)\\
		&\qquad{}+2\tilde{\bar{d}}_{l,0}h\sum_{l=0}^{n-1}\int_0^1
		K_{11}(t_n+sh,t_l+\tau h)\biggl[
		h^{m-1}\sum_{r=1}^{m-1}P_m'(s_r)L_r(\tau)\biggr]\,\ud\tau+O(h^m).
	\end{align*}
	Therefore, if $\{s_r\}_{r=1}^{m-1}$ satisfy $P_m'(s_r)=0$, then
	$\delta(t_n+sh)=O(h^m)$, and $\tilde{\tilde{\bm{\delta}}}_n=\bm{O}(h^{m-1})$.
	Now return to the proof of Theorem~\ref{sup_perturbed} when~$m$ is even and
	$m_1=m$, so that $\nu_4-1=\min\{m, m-2\}=m-2$ and, by Theorem~\ref{thr4},
	$\nu_2-1=\min\{m-1,m-3\}=m-3$.
	
	Again from~\eqref{even_sup_e} together with the above estimation,
	\[
	e_2(t_n+sh)=\tilde{\bar{d}}_{n,1}h^{m-3}\sum_{j=0}^{m-1}v_jP_j(s)+O(h^{m-2})
	=-2\tilde{\bar{d}}_{n,1}h^{m-3}P_m'(s)+O(h^{m-2}),
	\]
	where the scalars~$\tilde{\bar{d}}_{n,1}$ are bounded as~$h\to0$.
	Hence, if $\{s_r\}_{r=1}^{m-1}$ satisfy $P_m'(s_r)=0$, then
	$e_{2,h}(t_n+s_rh)=O(h^{m-2})$.
\end{proof}

\begin{remark}
	For the convergence rate of the $x_1$~component, we obtain one order higher
	at some special points than the global convergence order, which is also one
	order higher than the global convergence rate of the DC
	method~\cite[Theorem 4.5]{2016siam}. For the $x_2$~component, we also get
	one order higher than the global convergence order of the DG method: at
	some special points for~$m$ even, and globally for~$m$ odd.
\end{remark}

\begin{remark}
	For the index-$1$ IAE, the global superconvergence result of DG solution of
	the first component is usually obtained by
	iteration~\cite[Theorem 4.2]{gao2023}. However, for the index-$2$ IAE
	\eqref{IAE}, we obtain local and global superconvergence results directly,
	without using iteration.
\end{remark}

\section{Numerical experiments}\label{sec:5}
We now present some representative examples to illustrate the previous convergence results.
\begin{example}\label{ex1}
	Consider the following index-$2$ IAE from Liang and Brunner~\cite{2016siam}
	with~$r=1$:
	\begin{align*}
		\left\{
		\begin{aligned}
			x_1(t)+\int_0^t[(t-s)x_1(s)+e^{t-s}x_2(s)]\,\ud s&=f_1(t),\\
			\int_0^t e^{2t-s}x_1(s)\,\d s&=f_2(t),
		\end{aligned}\right.
	\end{align*}
	on $I:=[0,1]$. We choose $f_1(t)$~and $f_2(t)$ such that the exact
	solution is $x_1(t)=t e^{-t}$~and $x_2(t)=\cos t$. Here
	$x_1^{(m)}(0)\not=0$ and $x_2^{(m)}(0)=0$ when~$m$ is odd.
\end{example}

Tables \ref{tab1}~and \ref{tab2} list the maximum absolute errors of the DG
solutions. The observed convergence results agree with Theorem~\ref{thr4}.
Tables \ref{tab3}~and \ref{tab4} list the maximum absolute errors of the
local DG solution at points~$t_n+s_rh$ with~$P'_{m+1}(s_r)=0$ when~$m$ is odd,
but $P'_{m}(s_r)=0$ when $m$ is even.  The numerical results show
that, due to $x_1^{(m)}(0)\neq0$, the superconvergence occurs only when~$m$ is
even, in agreement with Theorems \ref{thr6}~and \ref{thr4}.

\begin{table}[!t]
	\caption{The maximum absolute DG errors for $x_1$ component of Example \ref{ex1}\label{tab1}}%
	\begin{tabular*}{\columnwidth}{@{\extracolsep\fill}l>{\ttfamily}l>{\ttfamily}l>{\ttfamily}l>{\ttfamily}l@{\extracolsep\fill}}
		\hline
	$N$   & $m=3$    & $m=4$  & $m=5$ & $m=6$ \\
	\hline
	$2^2$ & 7.33E-04 & 4.06E-05 & 4.39E-07 & 4.65E-07 \\
	$2^3$ & 1.01E-04 & 4.96E-06 & 1.50E-08 & 1.39E-08 \\
	$2^4$ & 1.36E-05 & 6.09E-07 & 4.93E-10 & 4.27E-10 \\
	$2^5$ & 1.70E-06 & 7.56E-08 & 1.58E-11 & 1.32E-11 \\
	Order & 3.000  & 3.010 & 4.964 & 5.016  \\
	\hline
	\end{tabular*}
\end{table}

\begin{table}[!t]
	\caption{The maximum absolute DG errors for $x_2$ component of Example \ref{ex1}\label{tab2}}%
	\begin{tabular*}{\columnwidth}{@{\extracolsep\fill}l>{\ttfamily}l>{\ttfamily}l>{\ttfamily}l>{\ttfamily}l@{\extracolsep\fill}}
		\hline
		$N$  & $m=3$    & $m=4$  & $m=5$  & $m=6$ \\
		\hline
		$2^2$ & 1.11E-01 & 1.47E-02 & 1.56E-04 & 7.03E-05 \\
		$2^3$ & 5.83E-02 & 7.92E-03 & 2.75E-05 & 1.06E-05 \\
		$2^4$ & 3.09E-02 & 4.08E-03 & 2.88E-06 & 1.42E-06 \\
		$2^5$ & 1.50E-02 & 2.07E-03 & 3.64E-07 & 1.84E-07 \\
		Order & 1.042  & 0.979  & 2.984 & 2.948   \\
		\hline
	\end{tabular*}
\end{table}

\begin{table}[!t]
	\caption{The local DG errors at $t_n+s_rh$ for $x_1$ component of Example \ref{ex1}}\label{tab3}%
	\begin{tabular*}{\columnwidth}{@{\extracolsep\fill}l>{\ttfamily}l>{\ttfamily}l>{\ttfamily}l>{\ttfamily}l@{\extracolsep\fill}}
		\hline
		$N$  & $m=3$    & $m=4$  & $m=5$  & $m=6$ \\
		\hline
		$2^2$ & 8.12E-05 & 1.71E-06 & 2.61E-08 & 1.92E-08 \\
		$2^3$ & 1.04E-05 & 1.14E-07 & 8.45E-10 & 3.33E-10 \\
		$2^4$ & 1.31E-06 & 7.34E-09 & 2.67E-11 & 5.49E-12 \\
		$2^5$ & 1.64E-07 & 4.66E-10 & 8.38E-13 & 8.81E-14 \\
		Order & 3.000     & 3.977     & 4.994     & 5.962   \\
		\hline
	\end{tabular*}
\end{table}

\begin{table}[!t]
	\caption{The local DG errors at $t_n+s_rh$ for $x_2$ component of Example \ref{ex1}}\label{tab4}%
	\begin{tabular*}{\columnwidth}{@{\extracolsep\fill}l>{\ttfamily}l>{\ttfamily}l>{\ttfamily}l>{\ttfamily}l@{\extracolsep\fill}}
		\hline
		$N$  & $m=3$    & $m=4$  & $m=5$  & $m=6$ \\
		\hline
		$2^2$ & 2.66E-02 & 2.46E-04 & 2.11E-05 & 1.00E-06 \\
		$2^3$ & 1.43E-02 & 6.43E-05 & 2.79E-06 & 7.16E-08 \\
		$2^4$ & 7.35E-03 & 1.64E-05 & 3.56E-07 & 4.67E-09\\
		$2^5$ & 3.73E-03 & 4.12E-06 & 4.53E-08 & 2.98E-10 \\
		Order & 0.979 & 1.993  & 2.974  & 3.970 \\
		\hline
	\end{tabular*}
\end{table}

\begin{example}\label{ex2}
	Consider the index-$2$ IAE
	\begin{align*}
		\left\{
		\begin{aligned}
			x_1(t)+\int_0^t[(t-s)x_1(s)+e^{t-s}x_2(s)]\,\ud s&=f_1(t),\\
			\int_0^t e^{2t-s}x_1(s)\,\ud s&=f_2(t)
		\end{aligned}\right.
	\end{align*}
	on~$I:=[0,1]$. We choose $f_1(t)$~and $f_2(t)$ such that the exact solution is
	$x_1(t)=t\sin(t)$~and $x_2(t)=\cos t$. Note that this time we have both the
	components~$x_p(t)$ satifying $x_p^{(m)}(0)=0$ ($p=1$, $2$) when~$m$ is odd.
\end{example}

In Tables \ref{tab5}~and \ref{tab6}, we list the maximum absolute errors for
the DG solution. Observe that for the first component~$x_1$, the global
convergence order agrees with Theorem~\ref{thr4}. For the second
component~$x_2$, if $m$ is even, then the global convergence order also agrees
with Theorem~\ref{thr4}, whereas if $m$ is odd, then global
superconvergence is evident, and the corresponding order agrees with
Theorem~\ref{thr6}. Tables \ref{tab7}~and \ref{tab8} list the maximum
absolute errors of the local DG solution at points~$t_n+s_rh$
with~$P'_{m+1}(s_r)=0$ when~$m$ is odd, but with~$P'_{m}(s_r)=0$ when~$m$ is
even. The numerical results show that the condition~$x_1^{(m)}(0)=0$ is
necessary, and the local superconvergence results agree with Theorem~\ref{thr6}.

\begin{table}[!t]
	\caption{The maximum absolute DG errors for $x_1$ component of Example \ref{ex2}}\label{tab5}%
	\begin{tabular*}{\columnwidth}{@{\extracolsep\fill}l>{\ttfamily}l>{\ttfamily}l>{\ttfamily}l>{\ttfamily}l@{\extracolsep\fill}}
		\hline
		$N$  & $m=3$    & $m=4$  & $m=5$  & $m=6$ \\
		\hline
		$2^2$ & 5.64E-04 & 5.58E-05 & 2.90E-07 & 6.51E-07 \\
		$2^3$ & 7.08E-05 & 6.75E-06 & 9.09E-09 & 1.91E-08 \\
		$2^4$ & 8.87E-06 & 8.31E-07 & 2.85E-10 & 5.82E-10 \\
		$2^5$ & 1.11E-06 & 1.03E-07 & 8.91E-12 & 1.80E-11 \\
		Order & 2.998     & 3.012     & 4.999     & 5.015 \\
		\hline
	\end{tabular*}
\end{table}

\begin{table}[!t]
	\caption{The maximum absolute DG errors for $x_2$ component of Example \ref{ex2}}\label{tab6}%
	\begin{tabular*}{\columnwidth}{@{\extracolsep\fill}l>{\ttfamily}l>{\ttfamily}l>{\ttfamily}l>{\ttfamily}l@{\extracolsep\fill}}
		\hline
		$N$  & $m=3$    & $m=4$  & $m=5$  & $m=6$ \\
		\hline
		$2^2$ & 8.82E-03 & 1.90E-02 & 2.60E-04 & 9.23E-05 \\
		$2^3$ & 1.95E-03 & 1.02E-02 & 1.38E-05 & 1.36E-05 \\
		$2^4$ & 4.56E-04 & 5.26E-03 & 7.94E-07 & 1.82E-06 \\
		$2^5$ & 1.10E-04 & 2.67E-03 & 4.76E-08 & 2.34E-07 \\
		Order & 2.052    & 0.978     & 4.060     & 2.959 \\
		\hline
	\end{tabular*}
\end{table}

\begin{table}[!t]
	\caption{The local DG errors at $t_n+s_rh$ for $x_1$ component of Example \ref{ex2}}\label{tab7}%
	\begin{tabular*}{\columnwidth}{@{\extracolsep\fill}l>{\ttfamily}l>{\ttfamily}l>{\ttfamily}l>{\ttfamily}l@{\extracolsep\fill}}
		\hline
		$N$  & $m=3$    & $m=4$  & $m=5$  & $m=6$ \\
		\hline
		$2^2$ & 1.00E-05 & 1.98E-06 & 1.95E-07 & 2.45E-08 \\
		$2^3$ & 6.51E-07 & 1.24E-07 & 3.30E-09 & 3.81E-10 \\
		$2^4$ & 4.10E-08 & 7.74E-09 & 5.38E-11 & 6.00E-12 \\
		$2^5$ & 2.56E-09 & 4.84E-10 & 8.46E-13 & 9.38E-14 \\
		Order & 4.001  & 3.999  & 5.990  & 5.999   \\
		\hline
	\end{tabular*}
\end{table}

\begin{table}[!t]
	\caption{The local DG errors at $t_n+s_rh$ for $x_2$ component of Example \ref{ex2}}\label{tab8}%
	\begin{tabular*}{\columnwidth}{@{\extracolsep\fill}l>{\ttfamily}l>{\ttfamily}l>{\ttfamily}l>{\ttfamily}l@{\extracolsep\fill}}
		\hline
		$N$  & $m=3$    & $m=4$  & $m=5$  & $m=6$ \\
		\hline
		$2^2$ & 3.91E-03 & 3.34E-04 & 5.59E-05 & 1.33E-06 \\
		$2^3$ & 9.12E-04 & 8.74E-05 & 3.17E-06 & 9.64E-08 \\
		$2^4$ & 2.21E-04 & 2.22E-05 & 1.90E-07 & 6.31E-09 \\
		$2^5$ & 5.43E-05 & 5.59E-06 & 1.16E-08 & 4.02E-10 \\
		Order & 2.025     & 1.990  & 4.034 & 3.972  \\
		\hline
	\end{tabular*}
\end{table}

\begin{example}\label{ex3}
	Consider the index-$2$ IAE
	\begin{align*}
		\left\{
		\begin{aligned}
			x_1(t)+\int_0^t[(t-s)x_1(s)+e^{t-s}x_2(s)]\,\ud s&=f_1(t),\\
			\int_0^t e^{2t-s}x_1(s)\,\ud s&=f_2(t)
		\end{aligned}\right.
	\end{align*}
	on~$I:=[0,1]$. We choose $f_1(t)$~and $f_2(t)$ such that the exact solution is
	$x_1(t)=\cos t$~and $x_2(t)=e^{-t}$. Note that this time the components
	satisfy $x_1^{(m)}(0)=0$~and $x_2^{(m)}(0)\ne0$ when~$m$ is odd.
\end{example}

Tables~\ref{tab9}--\ref{tab12} list the numerical results corresponding to
Tables~\ref{tab5}--\ref{tab8} for Example~\ref{ex3}, and we obtain similar
convergence and superconvergence results, which means that
the condition~$x_2^{(m)}(0)=0$ is not necessary. The superconvergence
result is typically decided by the superconvergence of first-kind VIEs.

\begin{table}[!t]
	\caption{The maximum absolute DG errors for $x_1$ component of Example \ref{ex3}}\label{tab9}%
	\begin{tabular*}{\columnwidth}{@{\extracolsep\fill}l>{\ttfamily}l>{\ttfamily}l>{\ttfamily}l>{\ttfamily}l@{\extracolsep\fill}}
		\hline
		$N$  & $m=3$    & $m=4$  & $m=5$  & $m=6$ \\
		\hline
		$2^2$ & 1.53E-04 & 1.56E-05 & 1.63E-06 & 1.16E-07 \\
		$2^3$ & 1.93E-05 & 1.88E-06 & 5.11E-08 & 3.43E-09 \\
		$2^4$ & 2.43E-06 & 2.31E-07 & 1.61E-09 & 1.04E-10 \\
		$2^5$ & 3.05E-07 & 2.87E-08 & 5.07E-11 & 3.18E-12 \\
		Order & 2.994     & 3.009        & 4.989   & 5.031  \\
		\hline
	\end{tabular*}
\end{table}

\begin{table}[!t]
	\caption{The maximum absolute DG errors for $x_2$ component of Example \ref{ex3}}\label{tab10}%
	\begin{tabular*}{\columnwidth}{@{\extracolsep\fill}l>{\ttfamily}l>{\ttfamily}l>{\ttfamily}l>{\ttfamily}l@{\extracolsep\fill}}
		\hline
		$N$  & $m=3$    & $m=4$  & $m=5$  & $m=6$ \\
		\hline
		$2^2$ & 2.34E-03 & 4.92E-03 & 4.16E-05 & 1.32E-04 \\
		$2^3$ & 5.26E-04 & 2.65E-03 & 2.31E-06 & 1.70E-05 \\
		$2^4$ & 1.24E-04 & 1.37E-03 & 1.37E-07 & 2.17E-06 \\
		$2^5$ & 3.02E-05 & 6.97E-04 & 7.91E-09 & 2.75E-07 \\
		Order & 2.038     & 0.975     & 4.114    & 2.980 \\
		\hline
	\end{tabular*}
\end{table}

\begin{table}[!t]
	\caption{The local DG errors at $t_n+s_rh$ for $x_1$ component of Example \ref{ex3}}\label{tab11}%
	\begin{tabular*}{\columnwidth}{@{\extracolsep\fill}l>{\ttfamily}l>{\ttfamily}l>{\ttfamily}l>{\ttfamily}l@{\extracolsep\fill}}
		\hline
		$N$  & $m=3$    & $m=4$  & $m=5$  & $m=6$ \\
		\hline
		$2^2$ & 2.55E-06 & 4.96E-07 & 3.26E-08 & 4.13E-09 \\
		$2^3$ & 1.64E-07 & 3.12E-08 & 5.62E-10 & 6.43E-11 \\
		$2^4$ & 1.03E-08 & 1.95E-09 & 9.02E-12 & 1.01E-12 \\
		$2^5$ & 6.41E-10 & 1.22E-10 & 1.41E-13 & 1.60E-14 \\
		Order & 4.006   & 3.999  & 5.999  & 5.980 \\ 
		\hline
	\end{tabular*}
\end{table}

\begin{table}[!t]
	\caption{The local DG errors at $t_n+s_rh$ for $x_2$ component of Example \ref{ex3}}\label{tab12}%
	\begin{tabular*}{\columnwidth}{@{\extracolsep\fill}l>{\ttfamily}l>{\ttfamily}l>{\ttfamily}l>{\ttfamily}l@{\extracolsep\fill}}
		\hline
		$N$  & $m=3$    & $m=4$  & $m=5$  & $m=6$ \\
		\hline
		$2^2$ & 1.06E-03 & 9.01E-05 & 9.16E-06 & 5.22E-06 \\
		$2^3$ & 2.49E-04 & 2.38E-05 & 5.41E-07 & 3.69E-07 \\
		$2^4$ & 6.05E-05 & 6.08E-06 & 3.31E-08 & 2.43E-08 \\
		$2^5$ & 1.49E-05 & 1.53E-06 & 1.95E-09 & 1.55E-09 \\
		Order & 2.022     & 1.991  & 4.085  & 3.971 \\
		\hline
	\end{tabular*}
\end{table}
\section*{Acknowledgments}
Part of the work of the first author was carried out while she was a vistor at University of New South Wales (October 2023--October 2024). She gratefully acknowledges Professor William McLean for his invitation to visit UNSW, as well as his carefully reading and improving this paper.

\section*{Funding}
The work is supported by the National Natural Science Foundation of China (Grant No.~12171122), the Guangdong Provincial Natural Science Foundation of China (2023A1515010818), and the Shenzhen Science and Technology Program (RCJC20210609103755110). H.~Gao acknowledges the support of the China Scholarship Council program (Project ID: 202306120321).
\bibliographystyle{plain}
\bibliography{main}
\end{document}